\DeclareMathOperator{\diam}{diam}
\DeclareMathOperator{\dist}{dist}
\providecommand{\red}[1]{{\color{black}#1}}
\providecommand{\am}[1]{{\color{blue}#1}}
\begin{document}
\newcommand{\rf}[1]{(\ref{#1})}
\newcommand{\mmbox}[1]{\fbox{\ensuremath{\displaystyle{ #1 }}}}	% my box to highlight main formulas

%------------------------------------------------------------------------------
%% Layout
\newcommand{\hs}[1]{\hspace{#1mm}}
\newcommand{\vs}[1]{\vspace{#1mm}}

%------------------------------------------------------------------------------
\newcommand{\ri}{{\mathrm{i}}}
\newcommand{\re}{{\mathrm{e}}}
\newcommand{\rd}{\mathrm{d}}

%------------------------------------------------------------------------------
%% Fields
\newcommand{\Rnotsn}{\mathbb{R}}
\newcommand{\Qnotsn}{\mathbb{Q}}
\newcommand{\Nnotsn}{\mathbb{N}}
\newcommand{\Cnotsn}{\mathbb{C}}
\newcommand{\K}{{\mathbb{K}}}

%------------------------------------------------------------------------------
\newcommand{\cA}{\mathcal{A}}
\newcommand{\cB}{\mathcal{B}}
\newcommand{\cC}{\mathcal{C}}
\newcommand{\cS}{\mathcal{S}}
\newcommand{\cD}{\mathcal{D}}
\newcommand{\cH}{\mathcal{H}}
\newcommand{\cI}{\mathcal{I}}
\newcommand{\cItilde}{\tilde{\mathcal{I}}}
\newcommand{\cIhat}{\hat{\mathcal{I}}}
\newcommand{\cIcheck}{\check{\mathcal{I}}}
\newcommand{\cIstar}{{\mathcal{I}^*}}
\newcommand{\cJ}{\mathcal{J}}
\newcommand{\cM}{\mathcal{M}}
\newcommand{\cP}{\mathcal{P}}
\newcommand{\cV}{{\mathcal V}}
\newcommand{\cW}{{\mathcal W}}
\newcommand{\scrD}{\mathscr{D}}
\newcommand{\scrS}{\mathscr{S}}
\newcommand{\scrJ}{\mathscr{J}}
\newcommand{\sD}{\mathsf{D}}
\newcommand{\sN}{\mathsf{N}}
\newcommand{\sS}{\mathsf{S}}
 \newcommand{\sT}{\mathsf{T}}
 \newcommand{\sH}{\mathsf{H}}
 \newcommand{\sI}{\mathsf{I}}
 
\newcommand{\bs}[1]{\mathbf{#1}}
\newcommand{\bb}{\mathbf{b}}
\newcommand{\bd}{\mathbf{d}}
\newcommand{\bn}{\mathbf{n}}
\newcommand{\bp}{\mathbf{p}}
\newcommand{\bP}{\mathbf{P}}
\newcommand{\bv}{\mathbf{v}}
\newcommand{\bx}{\mathbf{x}}
\newcommand{\by}{\mathbf{y}}
\newcommand{\bz}{{\mathbf{z}}}
\newcommand{\bxi}{\boldsymbol{\xi}}
\newcommand{\boldeta}{\boldsymbol{\eta}}

\newcommand{\ts}{\tilde{s}}
\newcommand{\tGamma}{{\tilde{\Gamma}}}
 \newcommand{\tbx}{\tilde{\bx}}
 \newcommand{\tbd}{\tilde{\bd}}
 \newcommand{\txi}{\xi}
 
\newcommand{\done}[2]{\dfrac{d {#1}}{d {#2}}}
\newcommand{\donet}[2]{\frac{d {#1}}{d {#2}}}
\newcommand{\pdone}[2]{\dfrac{\partial {#1}}{\partial {#2}}}
\newcommand{\pdonet}[2]{\frac{\partial {#1}}{\partial {#2}}}
\newcommand{\pdonetext}[2]{\partial {#1}/\partial {#2}}
\newcommand{\pdtwo}[2]{\dfrac{\partial^2 {#1}}{\partial {#2}^2}}
\newcommand{\pdtwot}[2]{\frac{\partial^2 {#1}}{\partial {#2}^2}}
\newcommand{\pdtwomix}[3]{\dfrac{\partial^2 {#1}}{\partial {#2}\partial {#3}}}
\newcommand{\pdtwomixt}[3]{\frac{\partial^2 {#1}}{\partial {#2}\partial {#3}}}
\newcommand{\bnabla}{\boldsymbol{\nabla}}
\newcommand{\dive}{\boldsymbol{\nabla}\cdot}
\newcommand{\curl}{\boldsymbol{\nabla}\times}
\newcommand{\Phixy}{\Phi(\bx,\by)}
\newcommand{\PhiOxy}{\Phi_0(\bx,\by)}
\newcommand{\dxPhixy}{\pdone{\Phi}{n(\bx)}(\bx,\by)}
\newcommand{\dyPhixy}{\pdone{\Phi}{n(\by)}(\bx,\by)}
\newcommand{\dxPhiOxy}{\pdone{\Phi_0}{n(\bx)}(\bx,\by)}
\newcommand{\dyPhiOxy}{\pdone{\Phi_0}{n(\by)}(\bx,\by)}

\newcommand{\eps}{\varepsilon}
\newcommand{\real}[1]{{\rm Re}\left[#1\right]} 
\newcommand{\ol}[1]{\overline{#1}}
\newcommand{\ord}[1]{\mathcal{O}\left(#1\right)}
\newcommand{\oord}[1]{o\left(#1\right)}
\newcommand{\Ord}[1]{\Theta\left(#1\right)}

\newcommand{\hsnorm}[1]{||#1||_{H^{s}(\bs{R})}}
\newcommand{\hnorm}[1]{||#1||_{\tilde{H}^{-1/2}((0,1))}}
\newcommand{\norm}[2]{\left\|#1\right\|_{#2}}
\newcommand{\normt}[2]{\|#1\|_{#2}}
\newcommand{\on}[1]{\Vert{#1} \Vert_{1}}
\newcommand{\tn}[1]{\Vert{#1} \Vert_{2}}
\newcommand{\xt}{\mathbf{x},t}
\newcommand{\PhiF}{\Phi_{\rm freq}}
\newcommand{\cone}{{c_{j}^\pm}}
\newcommand{\ctwo}{{c_{2,j}^\pm}}
\newcommand{\cthree}{{c_{3,j}^\pm}}

\theoremstyle{thmstyleone}
\newtheorem{thm}{Theorem}[section]
\newtheorem{lem}[thm]{Lemma}
\newtheorem{prop}[thm]{Proposition}
\newtheorem{cor}[thm]{Corollary}

\theoremstyle{thmstyletwo}%
\newtheorem{rem}[thm]{Remark}%

\theoremstyle{thmstylethree}%
\newtheorem{defn}[thm]{Definition}%

\raggedbottom

\newcommand{\tH}{\widetilde{H}}
\newcommand{\Hze}{H_{\rm ze}}
\newcommand{\uze}{u_{\rm ze}}
\newcommand{\dimH}{{\rm dim_H}}
\newcommand{\dimB}{{\rm dim_B}}
\newcommand{\IntClosOm}{\mathrm{int}(\overline{\Omega})}
\newcommand{\IntClosOmOne}{\mathrm{int}(\overline{\Omega_1})}
\newcommand{\IntClosOmTwo}{\mathrm{int}(\overline{\Omega_2})}
\newcommand{\Ccomp}{C^{\rm comp}}
\newcommand{\tCcomp}{\tilde{C}^{\rm comp}}
\newcommand{\uC}{\underline{C}}
\newcommand{\utC}{\underline{\tilde{C}}}
\newcommand{\oC}{\overline{C}}
\newcommand{\otC}{\overline{\tilde{C}}}
\newcommand{\capcomp}{{\rm cap}^{\rm comp}}
\newcommand{\Capcomp}{{\rm Cap}^{\rm comp}}
\newcommand{\tcapcomp}{\widetilde{{\rm cap}}^{\rm comp}}
\newcommand{\tCapcomp}{\widetilde{{\rm Cap}}^{\rm comp}}
\newcommand{\hcapcomp}{\widehat{{\rm cap}}^{\rm comp}}
\newcommand{\hCapcomp}{\widehat{{\rm Cap}}^{\rm comp}}
\newcommand{\tcap}{\widetilde{{\rm cap}}}
\newcommand{\tCap}{\widetilde{{\rm Cap}}}
\newcommand{\ccap}{{\rm cap}}
\newcommand{\ucap}{\underline{\rm cap}}
\newcommand{\uCap}{\underline{\rm Cap}}
\newcommand{\cCap}{{\rm Cap}}
\newcommand{\ocap}{\overline{\rm cap}}
\newcommand{\oCap}{\overline{\rm Cap}}
\DeclareRobustCommand
{\mathringbig}[1]{\accentset{\smash{\raisebox{-0.1ex}{$\scriptstyle\circ$}}}{#1}\rule{0pt}{2.3ex}}
\newcommand{\cirH}{\mathringbig{H}}
\newcommand{\cirHs}{\mathringbig{H}{}^s}
\newcommand{\cirHt}{\mathringbig{H}{}^t}
\newcommand{\cirHm}{\mathringbig{H}{}^m}
\newcommand{\cirHzero}{\mathringbig{H}{}^0}
\newcommand{\deO}{{\partial\Omega}}
\newcommand{\OO}{{(\Omega)}}
\newcommand{\Rn}{{(\R^n)}}
\newcommand{\Id}{{\mathrm{Id}}}
\newcommand{\gap}{\mathrm{Gap}}
\newcommand{\ggap}{\mathrm{gap}}
\newcommand{\isom}{{\xrightarrow{\sim}}}
\newcommand{\half}{{1/2}}
\newcommand{\mhalf}{{-1/2}}
\newcommand{\inter}{{\mathrm{int}}}

\newcommand{\Hsp}{H^{s,p}}
\newcommand{\Htq}{H^{t,q}}
\newcommand{\tHsp}{{{\widetilde H}^{s,p}}}
\newcommand{\SP}{\ensuremath{(s,p)}}
\newcommand{\Xsp}{X^{s,p}}

\newcommand{\dd}{{d}}\newcommand{\pp}{{p_*}}

\newcommand{\Rnn}{\R^{n_1+n_2}}
\newcommand{\Tr}{{\mathrm{Tr}}}

\renewcommand{\arraystretch}{1.7}
\renewcommand{\bs}[1]{{\boldsymbol{#1}}} %
\newcommand{\vb}[1]{\vec{\bs{#1}}}
\newcommand{\be}{\bs{e}}
\renewcommand{\bn}{\bs{n}}
\renewcommand{\bx}{\bs{x}}
\renewcommand{\by}{\bs{y}}
\newcommand{\bg}{\bs{g}}
\newcommand{\bu}{\bs{u}}
\newcommand{\bw}{\bs{w}}
\newcommand{\bA}{\bs{A}}
\newcommand{\bC}{\bs{C}}
\newcommand{\bL}{\bs{L}}
\newcommand{\bS}{\bs{S}}
\newcommand{\bT}{\bs{T}}
\newcommand{\bU}{\bs{U}}
\newcommand{\bV}{\bs{V}}
\newcommand{\vbE}{\vb{E}}
\newcommand{\bX}{\bs{X}}
\newcommand{\bgamma}{{\bs{\gamma}}}
\newcommand{\bH}{\bs{H}}
\newcommand{\bnu}{\boldsymbol{\nu}}
\newcommand{\btau}{\boldsymbol{\tau}}
\newcommand{\bseta}{\boldsymbol{\eta}}
\newcommand{\rD}{{D}}
\newcommand{\rN}{\mathrm{N}}
\newcommand{\bm}{\bs{m}}
\newcommand{\No}{{\mathbb{N}_0}}
\newcommand{\tbX}{\tilde{\bX}}
\newcommand{\tA}{\tilde{A}}
\renewcommand{\tH}{\widetilde{H}{}}
\newcommand{\tbH}{\widetilde{\bH}{}}
\newcommand{\sM}{\mathsf{M}}
\newcommand{\sE}{\mathsf{E}}
\newcommand{\cT}{\mathcal{T}}
\newcommand{\cU}{\mathcal{U}}
\newcommand{\cF}{\mathcal{F}}
\newcommand{\cL}{\mathcal{L}}
\newcommand{\cK}{\mathcal{K}}
\newcommand{\cN}{\mathcal{N}}
\newcommand{\cE}{\mathcal{E}}
\newcommand{\cR}{\mathcal{R}}
\newcommand{\tcA}{\tilde{\mathcal{A}}}
\newcommand{\tcL}{\tilde{\mathcal{L}}}
\newcommand{\tcK}{\tilde{\mathcal{K}}}
\newcommand{\bcD}{\boldsymbol{\mathcal{D}}}%
\newcommand{\vbcU}{\vec{\boldsymbol{\cU}}}
\newcommand{\vbcE}{\vec{\boldsymbol{\cE}}}
\newcommand{\vbcS}{\vec{\boldsymbol{\cS}}}
\newcommand{\bscrS}{\boldsymbol{\scrS}}
\newcommand{\dudnjump}{\left[ \pdone{u}{n}\right]}
\newcommand{\dudnjumptext}{[ \pdonetext{u}{n}]}
\newcommand{\sumpm}[1]{\overbracket[0.5pt]{\underbracket[0.5pt]{\,#1\,}}}
\renewcommand{\dudnjump}{\left[ \pdone{u^s}{n}\right]}
\renewcommand{\dudnjumptext}{[ \pdonetext{u^s}{n}]}
\newcommand{\gradd}{\vec{\rm \bf grad}}
\newcommand{\curld}{\vec{\rm \bf curl}}
\newcommand{\dived}{{\rm div}}			
\newcommand{\gradp}{{\rm \bf grad}}	%
\newcommand{\curlp}{{\rm \bf curl}}	%
\newcommand{\scurlp}{{\rm curl}}		%
\newcommand{\divep}{{\rm div}}			%
\newcommand{\gradg}{{\rm \bf grad}_\Gamma}	%
\newcommand{\curlg}{{\rm \bf curl}_\Gamma}	%
\newcommand{\scurlg}{{\rm curl}_\Gamma}	%
\newcommand{\diveg}{{\rm div}_\Gamma}		%
\newcommand{\Deltag}{{\Delta_\Gamma}}		%
\newcommand{\kap}{m}
\newcommand{\buk}{\bu_\kap}
\newcommand{\hbuk}{\hat{\bu}_\kap}
\newcommand{\bvk}{\bv_\kap}
\newcommand{\hbvk}{\hat{\bv}_\kap}
\newcommand{\bwk}{\bw_\kap}
\newcommand{\hbwk}{\hat{\bw}_\kap}
\newcommand{\hbu}{\hat{\bu}}
\newcommand{\hbv}{\hat{\bv}}
\newcommand{\hbw}{\hat{\bw}}
\newcommand{\hphi}{\hat{\phi}}
\newcommand{\hpsi}{\hat{\psi}}
\newcommand{\deG}{{\partial\Gamma}}
\newcommand{\GG}{(\Gamma)}
\newcommand{\tr}{\mathrm{tr}_\Gamma}
\newcommand{\trs}{\mathrm{tr}_{\Gamma,s}}
\newcommand{\trhalf}{\mathrm{tr}_{\Gamma,\frac{1}{2}}}
\newcommand{\IH}{\mathbb{H}}
\newcommand{\IS}{\mathbb{S}}
\newcommand{\IL}{\mathbb{L}}
\newcommand{\II}{\mathbb{I}}
\newcommand{\IV}{\mathbb{V}}
\newcommand{\IW}{\mathbb{W}}
\newcommand{\IC}{\mathbb{C}}
\newcommand{\IX}{\mathbb{X}}
\newcommand{\IP}{\mathbb{P}}
\newcommand{\IQ}{\mathbb{Q}}
\newcommand{\IY}{\mathbb{Y}}
\newcommand{\tf}{\tilde{f}}
\newcommand{\dual}[2]{\left\langle #1\,,\,#2\right\rangle}
\newcommand{\wt}[1]{\widetilde{#1}}
\newcommand{\divepR}{{\rm div}_{\Rnotsn^2}}
\newcommand{\scurlpR}{{\rm curl}_{\Rnotsn^2}}
\newcommand{\Hull}{\mathrm{Hull}}
\newcommand{\UnionHull}{\Gamma_{\Hull,h}}
\newcommand{\UnionHullbm}{(\Gamma_{\bm})_{\Hull,h}}
\newcommand{\UnionHullbmp}{(\Gamma_{\bm'})_{\Hull,h}}
\newcommand{\cosc}{c_{\rm osc}}
\definecolor{purple0}{rgb}{0.4,0,0.5}
\definecolor{orange}{rgb}{1,0.4,0}
\newcommand{\cu}[1]{{\color{purple0} #1 }}
\definecolor{orange0}{rgb}{1,0.3,0}
\newcommand{\ctodo}[1]{{\color{orange0} {\bf TODO:} #1 }}
\definecolor{green0}{rgb}{0.1,0.6,0}
\newcommand{\dnote}[1]{{\color{green0} {\bf DH:} #1 }}
\newcommand{\dimloc}{\dim_{\mathrm{loc}}}

\title{Numerical Quadrature for Singular Integrals on Fractals}

\author{A.\ Gibbs$^{\text{a}}$,
	D.\ P.\ Hewett$^{\text{a}}$, 
	A. Moiola$^{\text{b}}$\\[2pt]
	$^{\text{a}}${\footnotesize Department of Mathematics, University College London, London, United Kingdom}\\[2pt]
	$^{\text{b}}${\footnotesize Dipartimento di Matematica ``F. Casorati'', Universit\`a degli studi di Pavia, Pavia, Italy}
}

\renewcommand{\thefootnote}{\arabic{footnote}}

\maketitle

\begin{abstract}
We present and analyse numerical quadrature rules for evaluating regular and singular integrals on self-similar fractal sets. The integration domain $\mathbb{R}^n$ is assumed to be the compact attractor of an iterated function system of contracting similarities satisfying the open set condition. Integration is with respect to any ``invariant'' (also known as ``balanced'' or ``self-similar'') measure supported on $\Gamma$, including in particular the Hausdorff measure $\mathcal{H}^d$ restricted to $\Gamma$, where $d$ is the Hausdorff dimension of $\Gamma$. Both single and double integrals are considered. 
Our focus is on composite quadrature rules in which integrals over $\Gamma$ are decomposed into sums of integrals over suitable partitions of $\Gamma$ into self-similar subsets. 
For certain singular integrands of logarithmic or algebraic type we show how in the context of such a partitioning the invariance property of the measure can be exploited to express the singular integral exactly in terms of regular integrals. 
For the evaluation of these regular integrals we adopt a composite barycentre rule, which for sufficiently regular integrands exhibits second-order convergence with respect to the maximum diameter of the subsets. 
As an application we show how this approach, combined with a singularity-subtraction technique, can be used to accurately evaluate the singular double integrals that arise in Hausdorff-measure Galerkin boundary element methods for acoustic wave scattering by fractal screens. 
\end{abstract}

\bigskip
\textbf{Keywords:}
Numerical integration, 
Singular integrals,
Hausdorff measure,
Fractals,
Iterated function systems,
Boundary element method

\bigskip
\textbf{Mathematics Subject Classification (2020):}
65D30, %
28A80 %

\section{Introduction}\label{sec:intro}

In this paper we study numerical quadrature rules for the evaluation of integrals of the form 
\begin{equation}\label{eq:single}
\int_{\Gamma} f(x) \,\rd\red{\mu}(x),
\end{equation}
and 
\begin{equation}\label{eq:double}
\int_{\Gamma}\int_{\Gamma'} f(x,y) \,\rd\red{\mu'}(y)\rd\red{\mu}(x),
\end{equation}
where $\Gamma$ and $\Gamma'$ are compact fractal subsets of $\Rnotsn^n$ - more precisely, the attractors of iterated function systems (IFSs) satisfying the open set condition (OSC) (see Subsection \ref{sec:IFS}) - \red{and $\mu$ and $\mu'$ are ``invariant'' (also known as ``balanced'' or ``self-similar'') measures on $\Gamma$ and $\Gamma'$ respectively (see Subsection \ref{sec:scaling}). A special case is where $\mu=\cH^d|_\Gamma$ and $\mu'=\cH^{d'}|_{\Gamma'}$, where} $\cH^{d}$ and $\cH^{d'}$ are Hausdorff measures, with $d$, $d'$ denoting the Hausdorff dimensions of $\Gamma$ and $\Gamma'$ (see Subsection \ref{sec:HausdorffMeasure}), \red{and $|_\Gamma$ denotes the restriction to $\Gamma$ in the sense that $\cH^d|_\Gamma(A):=\cH^d(\Gamma\cap A)$ for $A\subset\mathbb{R}^n$.}
Our particular interest is in singular integrals, where, in the case of \eqref{eq:single}, $f$ is singular at some point $\eta\in \Gamma$, and, in the case of \eqref{eq:double}, $f$ is singular on $x=y$. 

One context in which such integrals arise is in the discretization of certain boundary integral equation and volume integral equation formulations of boundary value problems for elliptic PDEs (such as the Laplace or Helmholtz equation) posed on domains with fractal boundary, for instance in the scattering of electromagnetic and acoustic waves by fractal obstacles \cite{ScreenPaper,BEMfract}, applications of which include antenna design in electrical engineering \cite{PBaRomPouCar:98,SriRanKri:11} and the quantification of the scattering effect of atmospheric ice crystals in climate modelling \cite{So:01}. 
Our main motivating example is the ``Hausdorff boundary element method (BEM)'' introduced in \cite{HausdorffBEM} for the solution of time-harmonic acoustic scattering in $\Rnotsn^{n+1}$ ($n=1,2$) by a sound-soft fractal screen $\Gamma_{\rm screen}\subset \Rnotsn^n\times \{0\}$, assumed to be the attractor of an IFS satisfying the OSC. 
The BEM proposed in \cite{HausdorffBEM} discretizes the associated single layer boundary integral equation on $\Gamma_{\rm screen}$ using an approximation space consisting of products of the relevant Hausdorff measure with piecewise-constant functions on a ``mesh'' of $\Gamma_{\rm screen}$ comprising self-similar fractal ``elements'', which are \red{subsets} of $\Gamma_{\rm screen}$ obtained as scaled, rotated and translated copies of $\Gamma_{\rm screen}$ via the IFS structure. The entries of the right-hand side vector in the Galerkin BEM system then involve integrals of the form \eqref{eq:single} \red{(with $\mu=\cH^d|_\Gamma$)}, where $\Gamma$ is an element of the mesh and $f(x)$ depends on the incident wave. The Galerkin BEM system matrix entries involve integrals of the form \eqref{eq:double} \red{(with $\mu=\cH^d|_\Gamma$ and $\mu'=\cH^{d'}|_{\Gamma'}$)}, where $\Gamma$ and $\Gamma'$ are elements of the mesh and $f(x,y)=\Phi(x,y)$, where $\Phi(x,y)$ is the fundamental solution of the Helmholtz equation in $\Rnotsn^{n+1}$, viz. 
\begin{equation}\label{eq:Helmker2d}
\Phi(x,y) = 
\begin{cases}
\frac{\ri}{4} H_0^{(1)}(k|x-y|),& n=1,\\[2mm]
\dfrac{\re^{\ri k |x-y|}}{4\pi |x-y|}, & n=2,
\end{cases}
\qquad x\neq y,
\end{equation}
where $k>0$ is the wavenumber and $H_\nu^{(1)}$ denotes the Hankel function of the first kind of order $\nu$. 
This choice of $f(x,y)$ makes \eqref{eq:double} singular when $\Gamma=\Gamma'$. 
Although not studied in \cite{HausdorffBEM}, one could also consider a collocation (as opposed to Galerkin) method for the same integral equation and approximation space, in which case the collocation matrix entries would involve integrals of the form \eqref{eq:single} \red{(with $\mu=\cH^d|_\Gamma$)}, where $\Gamma$ is an element of the mesh and $f(x)=\Phi(x,\eta)$, with $\eta$ denoting a collocation point, giving a singular integral when $\eta\in\Gamma$.
A key goal of the current paper is to present a detailed derivation and rigorous error analysis of the quadrature rules used in the implementation of the Hausdorff BEM in \cite{HausdorffBEM}. But we expect that the techniques we present will be of wider interest, since they apply \red{to general invariant measures,} to general regular integrands, and to a quite general class of singular integrands with logarithmic or algebraic singularities.  %

Our quadrature rules for \eqref{eq:single} and \eqref{eq:double} are based on decomposing integrals over $\Gamma$ into sums of integrals over suitable partitions of $\Gamma$ into self-similar \red{subsets}, generated using the IFS structure, in the same way that the Hausdorff BEM meshes are constructed in \cite{HausdorffBEM}. 
After reviewing some preliminaries in Section \ref{sec:prelim}, we start in Section \ref{sec:smooth_quad} by considering regular integrands. Applying a one-point quadrature rule on each \red{subset} leads to a composite quadrature rule, which, when the quadrature nodes are chosen as the barycentres (with respect to \red{$\mu$ or $\mu'$}) of the \red{subsets}, can achieve second-order convergence with respect to the maximum diameter of the \red{subsets} (Theorems \ref{th:MidLip1} and \ref{th:MidLip2}).
In Section \ref{sec:Phi_t} we then consider a special class of singular integrands, indexed by $t\ge0$, namely $f(x)=\Phi_t(x,\eta)$ (in the case of \eqref{eq:single}), and $f(x,y)=\Phi_t(x,y)$ with $\Gamma'=\Gamma$ (in the case of \eqref{eq:double}), where 
\begin{equation}\label{eq:Phit}
\Phi_t(x,y):=
\left\{
\begin{array}{ll}
\log |x-y|,& t=0,\\
|x-y|^{-t},& t>0.
\end{array}
\right.
\end{equation}
For these particular choices of $f$ (and for a particular choice of $\eta$ in the case of \eqref{eq:single}), we \red{use the fact (as noted in e.g.~\cite{bessis1987mellin} for the case of Cantor sets) that} the \red{invariance property of $\mu$} 
and the homogeneity property of $\Phi_t$ can be exploited to express the singular integral exactly in terms of regular integrals (Theorems \ref{th:Jsingthm} and \ref{th:sing_reform}), which can be evaluated using our composite barycentre rule, again with second-order accuracy (Corollaries \ref{cor:Jsingconvthm} and \ref{cor:Phi_t}). 
The results on $\Phi_t$ are directly relevant to Hausdorff-BEM formulations of Laplace problems analogous to the Helmholtz ones described above. 
In Section \ref{sec:Phi} we combine this \red{approach} to integrating $\Phi_t$ with a singularity-subtraction technique (cf.~\cite{Gr:82,An:80,Sc:68,AmBo:95}) to propose and analyse a second-order accurate quadrature rule for the motivating example from \cite{HausdorffBEM} discussed above, 
exploiting the fact that the singular behaviour of $\Phi(x,y)$ matches that of $\Phi_t(x,y)$ for $t=n-1$, $n=1,2$. Here the main result is Theorem \ref{thm:PhiDoubleSingular}. 
In Section \ref{sec:Numerics} we present some numerical results illustrating our theory.  \red{In the Appendix we collect some results concerning the integrability of singular functions with respect to invariant measures on IFS attractors.}

For ease of reference, we list the quadrature rules that we propose:
\begin{itemize}
\item the barycentre rule $Q_\Gamma[f]$ \eqref{eq:BaryQuad} for single regular integrals;
\item the barycentre rule $Q_{\Gamma,\Gamma'}[f]$ \eqref{eq:QGG} for double regular integrals;
\item the rule $Q_{\Gamma,t,m}^h$ \eqref{eq:SingleSingQuad} for single integrals of the singular integrand $\Phi_t$;
\item the rule $Q_{\Gamma,\Gamma,t}^h$ \eqref{eq:non_overlap_quad_gen} for double integrals of the singular integral $\Phi_t$;
\item the singularity-subtraction rule $Q_{\Gamma,\Gamma,\Phi}^h$ \eqref{eq:PhiQuad} for double integrals of the singular Helmholtz fundamental solution $\Phi$; this rule reduces to \eqref{eq:reduced} for small wavenumbers.
\end{itemize}
\red{A link to our open-source implementation of these rules, and a pointer to an interactive notebook showing example usage, is provided in Section~\ref{sec:Numerics}.} 

To put our results in the context of related work, we note that in the case $n=1$, i.e.\ when $\Gamma\subset\Rnotsn$, Gauss quadrature rules can be derived for \eqref{eq:single} (and applied to \eqref{eq:double} iteratively), as discussed e.g.\ in \cite{Ma:96,Ma:07}. For sufficiently regular integrands these rules offer superior convergence rates when compared to our composite barycentre rules.
However, one advantage of our low-order composite approach is that it is also generically applicable for $\Gamma\subset\Rnotsn^n$, $n>1$. By contrast, stable Gauss rules are not in general available for the case $n>1$ (except when $\Gamma$ is a subset of a line, in which case the $n=1$ results apply). Moreover, the case $n>1$ cannot in general be treated by taking Cartesian products of Gauss rules, since IFS attractors in $\Rnotsn^n$, $n>1$, are not in general the Cartesian product of IFS attractors in $\Rnotsn$. 
Furthermore, even when $\Gamma\subset\Rnotsn^n$ has such a Cartesian product structure, as is the case for the Cantor dust in Figure~\ref{fig:disjointness}(I), the corresponding \red{invariant} measure is not the tensor-product measure of the respective lower-dimensional \red{invariant} measures (see e.g.\ \cite[Proposition~7.1]{Fal}).
We stress, however, that if a stable Gauss rule (or any other quadrature rule for regular integrals) is available, it can be used in place of our composite barycentre rule within the context of our singularity-subtraction and \red{invariance} techniques for singular integrals, with corresponding analogues of the convergence results in Corollaries \ref{cor:Jsingconvthm} and \ref{cor:Phi_t} and Theorem \ref{thm:PhiDoubleSingular}.

\red{We also note that other quadrature rules for regular integrals on IFS attractors have been investigated in the abstract framework of uniform distributions and discrepancies in \cite{InfVol09,DrmInf12}, and in the context of so-called ``chaos games'' (i.e.\ Monte-Carlo-type algorithms)  in e.g.\ \cite{forte1998chaos}.
In both settings, convergence is typically proved for general continuous integrands, but convergence rates for smoother integrands are not provided. We present a numerical comparison between our quadrature rules and a simple chaos game approach in Section \ref{sec:Numerics}.}

We end this introduction with a comment relating to the practical evaluation of the quadrature rules presented in this paper. All our rules require knowledge of $\red{\mu}(\Gamma)$, since this quantity appears as a multiplicative factor in the formula \eqref{eq:weights_def} for the weights in the barycentre rule on which all our other quadrature rules are based. 
Somewhat surprisingly, \red{even in the special case $\mu=\cH^d|_\Gamma$} the exact value of $\cH^d(\Gamma)$ is known only for a small number of IFS attractors, including the middle third Cantor set in $\Rnotsn$ but not including the middle third Cantor dust in $\Rnotsn^2$ - we provide a more detailed commentary on the current state of knowledge regarding $\cH^d(\Gamma)$ in Remark \ref{rem:HdGamma}. 
This means that, in practice, it is in general not possible to compute \eqref{eq:single} and \eqref{eq:double} even for $f=1$!
However, this does not compromise the utility of our quadrature rules for the main motivating application of this paper, namely the implementation of the Galerkin Hausdorff BEM for acoustic scattering by a fractal screen $\Gamma_{\rm screen}$ in \cite{HausdorffBEM}\red{, and for similar possible applications to other differential and integral equations}.
This is because when one uses a BEM to solve a wave scattering problem, the physically relevant quantities such as the scattered wave field and its far-field pattern are unaffected by the choice of normalisation of the surface measure used in the BEM calculations. Working with the normalised measure $\cH_{\star}^d(\cdot):=\cH^d(\cdot)/\cH^d(\Gamma_{\rm screen})$ in the BEM application leads to integrals of the form \eqref{eq:single} and \eqref{eq:double} with $\cH^d$ replaced by $\cH_{\star}^d$. Our quadrature rules apply \textit{mutatis mutandis} to such integrals, requiring the value of $\cH_{\star}^d(\Gamma)$, but this can be computed for any subcomponent $\Gamma$ of $\Gamma_{\rm screen}$ using the IFS structure, because $\cH_{\star}^d$ has the same self-similarity scaling properties as $\cH^d$ on subcomponents of $\Gamma_{\rm screen}$ (cf.\ \eqref{eq:sim_measure} below), and $\cH_{\star}^d(\Gamma_{\rm screen})$ is known ($\cH_{\star}^d(\Gamma_{\rm screen})=1$ by the definition of $\cH_{\star}^d$). For details see 
\cite{HausdorffBEM}.

\section{Preliminaries}
\label{sec:prelim}

We begin by reviewing a number of basic results about IFS attractors and integration on them, and introduce the notation and terminology we will use throughout the paper. 

\subsection{Hausdorff measure and dimension}
\label{sec:HausdorffMeasure}

For $E\subset\Rnotsn^n$ and $\alpha\geq 0$ we recall (e.g.\ \cite[Section 3]{Fal}) the definition of the Hausdorff $\alpha$-measure of $E$, 
\[ \cH^\alpha(E)=\lim_{\delta\to 0} \bigg(\inf \sum_{i=1}^\infty \big(\diam(U_i)\big)^\alpha\bigg)\in[0,\infty)\cup\{\infty\},\]
where the infimum is over all countable covers of $E$ by 
sets $U_i\subset\Rnotsn^n$ with $\diam(U_i)\leq \delta$ for $i\in\Nnotsn$.
The Hausdorff dimension of $E$ is then defined to be 
\[ \dimH E
= \sup\big\{\alpha\in \Rnotsn^+: \cH^\alpha(E)=\infty\big\}
=\inf\big\{\alpha\in \Rnotsn^+: \cH^\alpha(E)=0\big\}\in[0,n],\]
\red{where the supremum of the empty set is taken to be zero.} Given $0< d\leq n$, we call a non-empty closed set $\Gamma\subset \Rnotsn^n$ a {\em$d$-set} if there exist $c_{2}>c_{1}>0$ such that
\begin{align}
\label{eq:dset}
c_{1}r^{d}\leq\mathcal{H}^{d}\big(\Gamma\cap B_{r}(x)\big)\leq c_{2}r^{d},\qquad x\in\Gamma,\quad0<r\leq 1,
\end{align}
where $B_r(x)$ denotes the closed ball of radius $r$ centred at $x$. 
This definition is equivalent to the definitions given in \cite[Subsubsection II.1.1]{JoWa84} and \cite[Subsection 3.1]{Triebel97FracSpec}, by \cite[Subsection 3.4]{Triebel97FracSpec}.
Condition \eqref{eq:dset} implies not only that $\dimH(\Gamma)=d$ \cite[Cor~3.6]{Triebel97FracSpec}, but moreover that $\Gamma$ is \textit{uniformly} $d$-dimensional in the sense that $\dimH(\Gamma\cap B_r(x))=d$ for every $x\in \Gamma$ and $r>0$. 
If $\Gamma$ is compact then condition \eqref{eq:dset} also gives that $0<\cH^d(\Gamma)<\infty$. 
and there exist $\tilde{c}_2>\tilde{c}_1>0$, depending only on $c_1,c_2$ %
and $\diam(\Gamma)$, such that 
\begin{align}
\label{eq:dset2}
\tilde{c}_{1}r^{d}\leq\cH^d\big(\Gamma\cap B_{r}(x)\big)\leq \tilde{c}_{2}r^{d},\qquad x\in\Gamma,\quad0<r\leq \diam(\Gamma).
\end{align}

\subsection{Iterated function systems}\label{sec:IFS}
Throughout the paper we assume that $\Gamma$ is the 
attractor of an iterated function system (IFS) of contracting similarities \red{(see e.g. \cite{hutchinson1981fractals,Fal})}, by which we mean 
a collection $\{s_1,s_2,\ldots,s_M\}$, for some $M\in\Nnotsn$, $M\geq 2$, where, for each $m=1,\ldots,M$, $s_m:\Rnotsn^n\to\Rnotsn^n$ satisfies, for some $\rho_m\in (0,1)$,
\begin{equation*}%
|s_m(x)-s_m(y)| = \rho_m|x-y|,\quad\text{for }x,y\in \Rnotsn^{n}.
\end{equation*} 
Explicitly, for each $m=1,\ldots,M$ we can write  %
\begin{equation}\label{eq:affine}
s_m(x) = \rho_mA_mx + \delta_m,
\end{equation}
for some orthogonal matrix $A_m\in\Rnotsn^{n\times n}$ and some translation $\delta_m\in\Rnotsn^n$. 
We denote by $\eta_{m}:=(I-\rho_mA_m)^{-1}\delta_m$ ($I$ being the $n\times n$ identity matrix) the fixed point of the contracting similarity $s_{m}$, i.e.\ the unique point $\eta_{m}\in \Rnotsn^n$ such that $s_m(\eta_m)=\eta_m$. 
Saying that $\Gamma$ is the attractor of the IFS means that $\Gamma$ is the unique non-empty compact set satisfying
\begin{equation*}
\Gamma = s(\Gamma), 
\end{equation*}
where 
\begin{align}
\label{eq:fullmap}
s(E) := \bigcup_{m=1}^M  s_m(E), \quad  E\subset \Rnotsn^{n}.
\end{align}
We shall also assume throughout that the \textit{open set condition} (OSC) \cite[Section 5.2]{hutchinson1981fractals} 
holds, meaning that there exists a non-empty bounded open set $O\subset \Rnotsn^{n}$ such that
\begin{align} \label{oscfirst}
s(O) \subset O \quad \mbox{and} \quad s_m(O)\cap s_{m'}(O)=\emptyset, \quad m\neq m'\in\{1,\ldots,M\}.
\end{align}
Then 
$\Gamma$ is a $d$-set (e.g.\ \cite[Thm.~4.7]{Triebel97FracSpec}), %
where $d\in (0,n]$ is the 
unique solution of %
\begin{align} \label{eq:dfirst}
\sum_{m=1}^M  (\rho_m)^d = 1.
\end{align}
Furthermore, $\cH^d(s_{m}(\Gamma))\cap s_{m'}(\Gamma))=0$ for $m\neq m'$, a property known as \textit{self-similarity} \cite[5.1(4)(ii)]{hutchinson1981fractals}.

The best-known example of an IFS attractor is the Cantor set $\Gamma\subset\Rnotsn$, 
defined by 
\begin{align}
\label{eq:CantorDef}
n=1, \quad M=2, \quad s_1(x)=\rho x, \quad s_2(x) = (1-\rho) + \rho x,
\end{align}
for some $\rho\in(0,1/2)$, the choice $\rho=1/3$ corresponding to the classical ``middle third'' case.

\subsection{Further assumptions on the IFS}
\label{sec:FurtherAssumptions}

\begin{figure}[t!]
\centering
\hspace{6pt}(I)\fbox{\includegraphics[width=.35\textwidth,clip,trim=70 20 60 10]{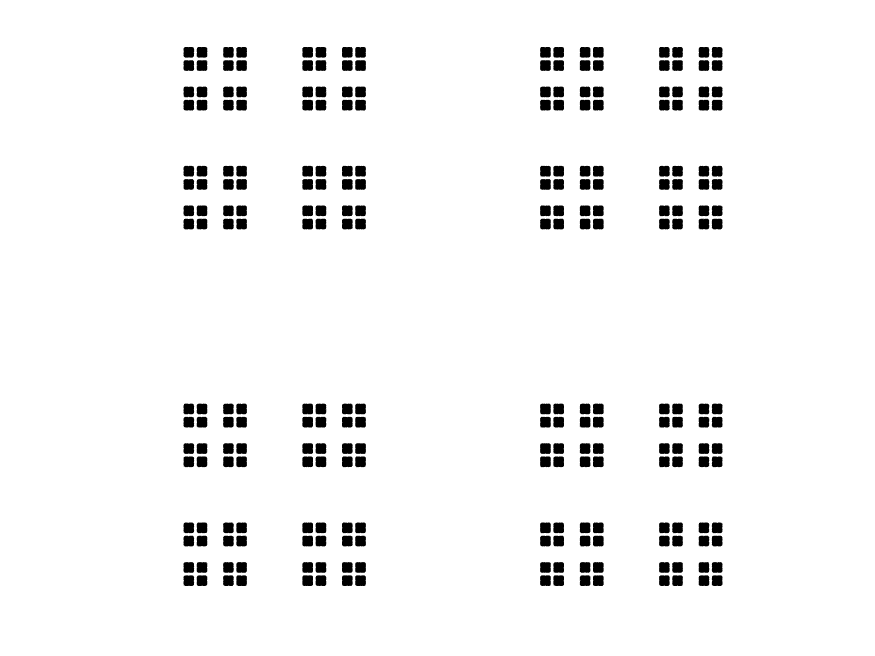}}
\fbox{\includegraphics[width=.35\textwidth,clip,trim=30 0 30 0]{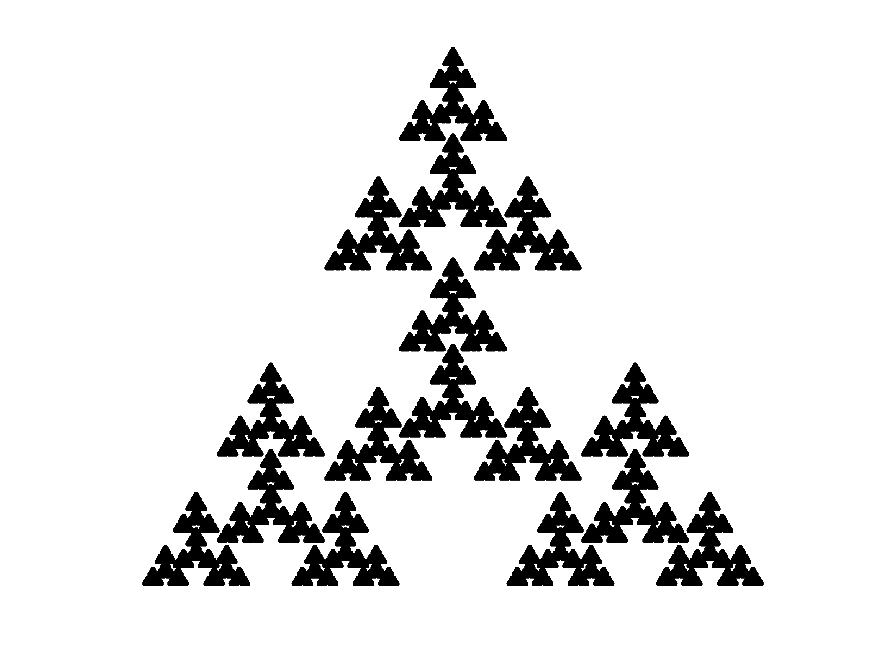}\rule{0pt}{.344\textwidth}}(II)
\\
(III)\fbox{\includegraphics[width=.35\textwidth,clip,trim=70 20 60 10]{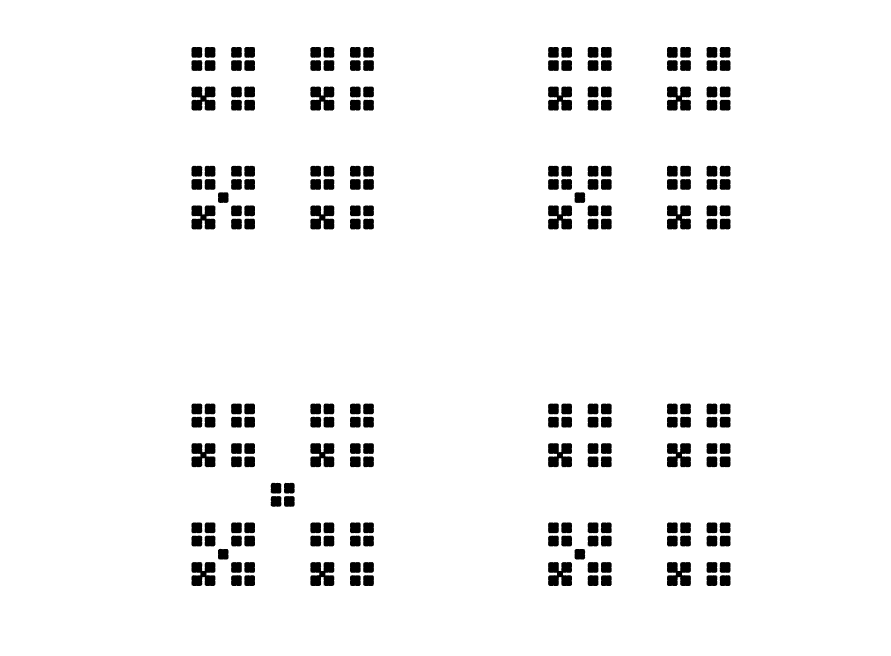}\rule{0pt}{.344\textwidth}}
\fbox{\includegraphics[width=.35\textwidth,clip,trim=70 20 60 10]{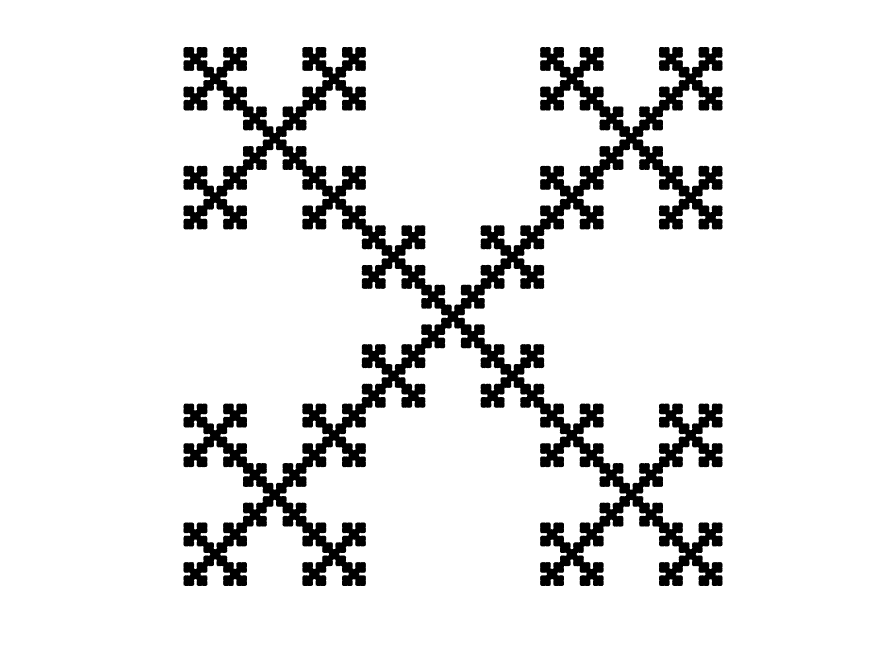}}(IV)
\caption{Four IFS attractors in $\Rnotsn^2$ with different degrees of ``disjointness''.
The formulas of all contractions are in Table~\ref{t:disjointness}. \red{Homogeneity}, disjointness and hull-disjointness were defined in 
Subsection \ref{sec:FurtherAssumptions}. 
\newline
(I) Top left: a \red{homogeneous} hull-disjoint IFS (a Cantor dust with $\rho=1/3$).
\newline
(II) Top right: a \red{homogeneous} disjoint IFS that is not hull-disjoint and such that $\eta_m\notin\Hull(\Gamma_{m'})$ for all $m\ne m'$ (recall that $\eta_m$ is the fixed point of $s_m$).
The fixed points are the three vertices and the centre of $\Hull\GG$.
This condition on the fixed points is relevant e.g.\ in Corollary~\ref{cor:Phi_t}.
\newline
(III) Bottom left: a non-\red{homogeneous} disjoint IFS that is not hull-disjoint and such that $\eta_m\in\Hull(\Gamma_{m'})$ for some $m'\ne m$.
(In particular, $s_5\GG\subset\Hull(s_3\GG)$.)
\newline
(IV) Bottom right: a \red{homogeneous} IFS attractor that is not disjoint (the Vicsek fractal).
\newline
All these examples satisfy the OSC \eqref{oscfirst}:
(I) and (IV) for the open square $O=(0,1)^2$, (II) and (III) (and (I) again) for a neighbourhood $O=\Gamma+B_\epsilon(0)$ with sufficiently small $\epsilon$.}
\label{fig:disjointness}
\end{figure}

We say that the IFS is \red{\textit{homogeneous} (as in, e.g., \cite{elton1989approximation})} if $\rho_m=\rho\in (0,1)$ for $m=1,\ldots,M$. 
In this case
\eqref{eq:dfirst} becomes
\begin{equation} \label{eq:d2}
M\rho^d =1, \quad \text{ equivalently, } \quad
d = \log(M)/\log(1/\rho).
\end{equation}
We say that the IFS is \emph{disjoint} 
\red{(as in, e.g., \cite[Defn 7.1]{barnsley2013developments})}
if 
\begin{equation}\label{eq:Rdef}
R_\Gamma:=\min_{m\neq m'}\big\{\dist \big(  s_{m}(\Gamma), s_{m'}(\Gamma)\big)  \big\}>0,
\end{equation}
which holds if and only if the open set $O$ in the OSC can be taken such that $\Gamma\subset O$ 
(e.g.~\cite{HausdorffBEM}).

We say that the IFS is \emph{hull-disjoint} if 
\begin{equation}\label{eq:RHulldef}
R_{\Gamma,\Hull}:=\min_{m\neq m'}\Big\{\dist \Big(  s_{m}\big(\Hull(\Gamma)\big), s_{m'}\big(\Hull(\Gamma)\big)\Big)  \Big\}>0,
\end{equation}
where $\Hull(E)$ denotes the convex hull of a set $E\subset\Rnotsn^n$. 
Clearly $R_{\Gamma,\Hull}\leq R_\Gamma$, so hull-disjointness implies disjointness. 
But the converse is not true -- see (II) and (III) in Figure~\ref{fig:disjointness} %
for counterexamples.

\begin{table}[tb!]\centering
\begin{tabular}{|c|c|l|l|}\hline
&$M$&Contractions $s_m$ & $d=\dimH(\Gamma)$\\\hline
(I)&4&$s_1(x)=\rho x + (0,1-\rho)$, $s_2(x)=\rho x+(1-\rho,1-\rho)$,& $\frac{\log4}{\log3}\approx 1.26$\\
&&$s_3(x)=\rho x$, $s_4(x)=\rho x + (1-\rho,0)$ with $\rho=1/3$. & \\\hline
(II)&4& $s_1(x)=\rho x$, $s_2(x)=\rho x+(1-\rho,0)$,& $\frac{\log4}{\log \frac{1}{0.41}}\approx 1.55$\\ &&
$s_3(x)=\rho x+\big(\frac12(1-\rho),\frac{\sqrt3}2(1-\rho)\big)$,&\\ && $s_4(x)=\rho x+\big(\frac12(1-\rho),\frac1{2\sqrt3}(1-\rho)\big)$, 
with $\rho=0.41$.& \\\hline
(III)&5& $s_1,\ldots,s_4$ as in (I) and $s_5(x)=\frac1{27}x+(\frac4{27},\frac4{27})$.
& $\frac{\log z}{\log3}\approx 1.28$,
\\
&&&
$z^3-4z^2-1=0$
\\\hline
(IV)&5& 
 $s_1,\ldots,s_4$ as in (I) and $s_5(x)=\rho x+(\rho,\rho)$.
 & $\frac{\log 5}{\log3}\approx 1.46$
\\\hline
\end{tabular}
\caption{The contractions corresponding to the IFS attractors in Figure \ref{fig:disjointness}.}
\label{t:disjointness}
\end{table}

\subsection{Vector index notation}
\label{sec:VectorIndex}
Our quadrature rules will be based on \red{partitioning} $\Gamma$ into \red{self-similar subsets}, via the IFS structure. To describe these \red{subsets} we adopt the vector index notation used in \cite{Jonsson98} \red{(cf.\ also \cite[Section 2.1]{hutchinson1981fractals})}. For $\ell\in \Nnotsn$ let $I_\ell:=\{1,\ldots,M\}^\ell$. Then for $E\subset\Rnotsn^n$ let $E_0:=E$, and, for $\bm=(m_1,\ldots,m_\ell)\in I_\ell$, let
\begin{equation*}%
E_{\bm} := s_{\bm}(E),\qquad s_{\bm}:=s_{m_1}\circ\ldots\circ s_{m_\ell}.
\end{equation*}
For an illustration of this notation in the case of the middle-third Cantor dust see Figure \ref{fig:CantorDustNotation}. 

\begin{figure}[t!]
\centering
\includegraphics[height=80mm]{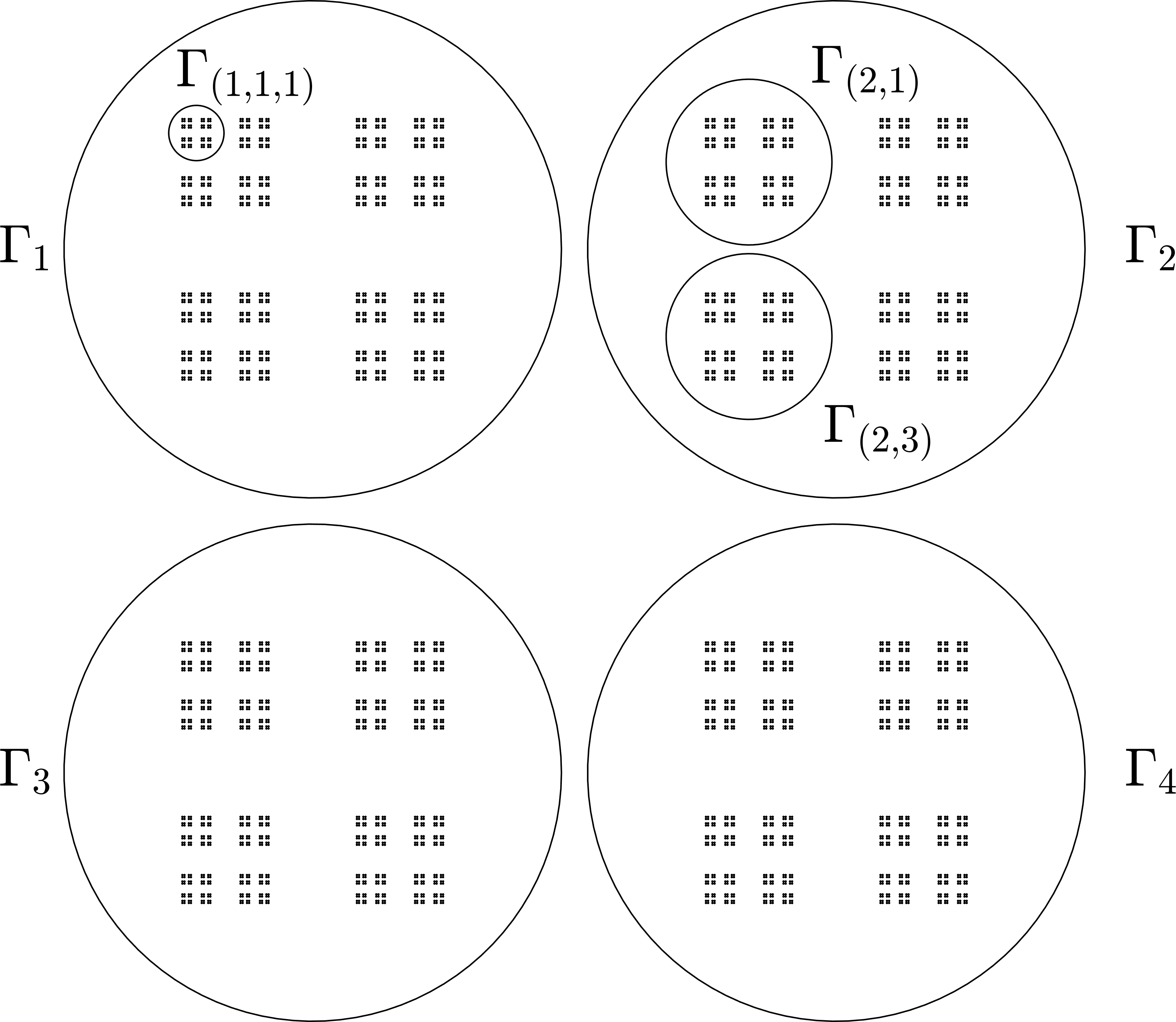}
\caption{
Illustration of the vector index notation of Subsection \ref{sec:VectorIndex}, in the case where $\Gamma\subset\Rnotsn^2$ is the middle-third Cantor dust (example (I) in Figure \ref{fig:disjointness} and Table \ref{t:disjointness}). 
The \red{subset}s $\Gamma_1$, $\Gamma_2$, $\Gamma_3$, $\Gamma_4$, $\Gamma_{(2,1)}$, $\Gamma_{(2,3)}$ and $\Gamma_{(1,1,1)}$ are circled. 
}
\label{fig:CantorDustNotation}
\end{figure}

For $\bm\in I_\ell$, $\Gamma_{\bm}$ is itself the attractor of an IFS, namely $\{s_{\bm} \circ s_m\circ (s_{\bm})^{-1}\}_{m=1}^M$, and 
\begin{align*}
\diam(\Gamma_{\bm}) = \bigg(\prod_{i=1}^\ell \rho_{m_i}\bigg)\diam(\Gamma).
\end{align*}
This implies that the ``elements'' of the ``Hausdorff-BEM'' proposed in \cite{HausdorffBEM} are themselves IFS attractors, so the quadrature rules developed here can be used in the implementation of that method.

When considering singular integrands it will be important to estimate the distance between \red{subsets} of $\Gamma$. To that end, given $\bm\neq \bn\in \cup_{\ell\in\Nnotsn}I_\ell$ we define \red{(cf.\ the partial ordering in \cite[Section 2.1]{hutchinson1981fractals})}
\begin{equation}\label{eq:lstar}
\ell_*(\bm,\bn):=\min\big\{\ell\in\{1,\ldots,\min\!\big(\!\dim(\bm),\dim(\bn)\big)\}: m_\ell\neq n_\ell\big\}\in \Nnotsn,
\end{equation}
and note the following obvious result, concerning how many indices share a common value of $\ell_*$.
\begin{lem}\label{lem:Ik_reorder}
Let $\ell\in\Nnotsn$. Given $\bm\in I_\ell$ and $\nu\in\{1,\ldots,\ell\}$, there are $M^{\ell-\nu}(M-1)$ indices $\bn\in I_\ell$ such that $\ell_*(\bm,\bn)=\nu$.
\end{lem}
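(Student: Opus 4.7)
The statement is essentially a bookkeeping exercise in counting multi-indices, so the plan is to unwind the definition of $\ell_*(\bm,\bn)$ in \eqref{eq:lstar} and enumerate the free coordinates of $\bn$.

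First I would recall what $\ell_*(\bm,\bn)=\nu$ means, coordinate by coordinate: writing $\bn=(n_1,\ldots,n_\ell)\in I_\ell$, the equation $\ell_*(\bm,\bn)=\nu$ is equivalent to the conjunction of the two conditions $n_i=m_i$ for all $i\in\{1,\ldots,\nu-1\}$ (so that $\nu$ is no larger than the first index of disagreement) together with $n_\nu\neq m_\nu$ (so that $\nu$ is no smaller). The coordinates $n_{\nu+1},\ldots,n_\ell$ are entirely unconstrained.

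Then I would count the number of $\bn$ satisfying these constraints by multiplying the numbers of admissible values for each coordinate: the first $\nu-1$ coordinates are determined (one choice each), the $\nu$-th coordinate has $M-1$ admissible values (all of $\{1,\ldots,M\}$ except $m_\nu$), and each of the remaining $\ell-\nu$ coordinates can be chosen freely from $\{1,\ldots,M\}$ (giving $M$ choices each). Multiplying yields
\begin{equation*}
\underbrace{1\cdot 1\cdots 1}_{\nu-1\ \text{factors}}\;\cdot\;(M-1)\;\cdot\;\underbrace{M\cdot M\cdots M}_{\ell-\nu\ \text{factors}}=(M-1)M^{\ell-\nu},
\end{equation*}
as claimed. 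No real obstacle is expected; the only minor point to verify is that the condition $\nu\in\{1,\ldots,\ell\}$ in the hypothesis ensures both that the set $\{1,\ldots,\nu-1\}$ of forced-equality indices is well defined (possibly empty when $\nu=1$) and that the set $\{\nu+1,\ldots,\ell\}$ of free indices is well defined (possibly empty when $\nu=\ell$), so that the degenerate boundary cases $\nu=1$ and $\nu=\ell$ reduce to $(M-1)M^{\ell-1}$ and $M-1$ respectively, consistent with the formula.
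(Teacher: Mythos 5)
Your proof is correct: the characterization of $\ell_*(\bm,\bn)=\nu$ as ``agree in the first $\nu-1$ coordinates, differ in the $\nu$-th, free thereafter'' and the resulting count $(M-1)M^{\ell-\nu}$ is exactly the intended argument, which the paper treats as obvious and does not spell out. Your handling of the boundary cases $\nu=1$ and $\nu=\ell$ is a fine (if unnecessary) extra check, and nothing is missing.
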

We shall also use the fact that if $\Gamma$ is hull-disjoint (in the sense of \eqref{eq:RHulldef}) then for $\bm\neq \bn\in \cup_{\ell\in\Nnotsn}I_\ell$
\begin{equation}\label{eq:rprod_ineq}
\dist\big(\Hull(\Gamma_{\bm}),\Hull(\Gamma_{\bn})\big)
\geq R_{\Gamma,\Hull}\prod_{i=1}^{\ell_*(\bm,\bn)-1} \rho_{m_i}. 
\end{equation}

\subsection{\red{Invariant measures and }scaling properties}
\label{sec:scaling}
Integrals over IFS attractors have certain scaling properties that will be central to our analysis. 
\red{In the case of the Hausdorff measure $\mu=\cH^d|_\Gamma$,} by e.g.\ \cite[(3.3)]{Jonsson98} we note that for $\ell\in\Nnotsn$, $\bm=(m_1,\ldots,m_\ell)\in I_\ell$, and for any $\red{\cH^d|_\Gamma}$-measurable function $f$,
\begin{align}
\int_{\Gamma_{\bm}}f(x)\,\rd\red{\cH^d}(x) 
&= \left(\frac{\diam{\Gamma_{\bm}}}{\diam\Gamma}\right)^d\int_\Gamma f\big(s_{\bm}(x')\big) \,\rd\red{\cH^d}(x')\nonumber\\
&=\bigg(\prod_{i=1}^\ell \rho_{m_i}^d \bigg)\int_\Gamma f\big(s_{\bm}(x')\big) \,\rd\red{\cH^d}(x').\label{eq:cov}
\end{align}
In particular, \red{taking $f\equiv 1$ gives}  
\begin{equation}\label{eq:sim_measure}
\quad \cH^d(\Gamma_{\bm})
= \left(\frac{\diam{\Gamma_{\bm}}}{\diam\Gamma}\right)^d \cH^d(\Gamma)
=\left(\prod_{i=1}^\ell \rho_{m_i}^d\right) \cH^d(\Gamma) .
\end{equation}

\red{The measure $\cH^d|_\Gamma$ is just one member of a general class of finite measures on $\Gamma$ for which similar scaling results apply. Given a collection $(p_1,\ldots,p_M)$ of positive weights (or ``probabilities'') satisfying 
\begin{align}
\label{eq:weightsum}
0<p_m<1, \quad m=1,\ldots,M, \qquad \text{and} \qquad \sum_{m=1}^M p_m =1,
\end{align}
there exists (see, e.g., \cite[Sections 4 \& 5]{hutchinson1981fractals}) a Borel regular finite measure $\mu$ supported on $\Gamma$, unique up to normalisation, called an ``invariant'' \cite{hutchinson1981fractals} (also known as ``balanced'' \cite{barnsley1985iterated} or ``self-similar'' 
\cite{moran1998singularity}) measure associated to $\Gamma$ and $(p_1,\ldots,p_M)$, 
such that $\mu(A)=\sum_{m=1}^M p_m \mu(s_m^{-1}(A))$ for every measurable set $A\subset \mathbb{R}^n$. 
By \cite[Thm.~2.1]{moran1998singularity} 
the OSC implies that $\mu(s_m(\Gamma)\cap s_{m'}(\Gamma))=0$ for each $m\neq m'$, and as a consequence we find that for $\ell\in\Nnotsn$, $\bm=(m_1,\ldots,m_\ell)\in I_\ell$, and any $\mu$-measurable function $f$,
\begin{align}
\int_{\Gamma_{\bm}}f(x)\,\rd\mu(x) 
=\bigg(\prod_{i=1}^\ell p_{m_i} \bigg)\int_\Gamma f\big(s_{\bm}(x')\big) \,\rd\mu(x')\label{eq:cov_general}
\end{align}
and 
\begin{equation}\label{eq:sim_measure_general}
\quad \mu(\Gamma_{\bm})
= \bigg(\prod_{i=1}^\ell p_{m_i}\bigg)\mu(\Gamma).
\end{equation}
We shall assume henceforth that $\mu$ is an invariant measure on $\Gamma$ in this sense, for some collection of associated weights $(p_1,\ldots,p_M)$. 
The case $\mu=\cH^d|_\Gamma$ corresponds to choosing $p_m=\rho_m^d$ for $m=1,\ldots,M$, with \eqref{eq:weightsum} holding by \eqref{eq:dfirst}.
}

\subsection{\red{Partitioning} \texorpdfstring{$\Gamma$}{Gamma}}
\label{sec:Splitting}
To define our composite quadrature rules we need to specify an index set $\cI\subset\cup_{\ell\in\Nnotsn}I_\ell$ such that
\begin{align}
\label{eq:index}
\Gamma = \bigcup_{\bm\in \cI}\Gamma_{\bm}\quad &\text{ and } \quad \red{\mu}(\Gamma_{\bm}\cap \Gamma_{\bm'})=0 \text{ for }\bm\neq \bm'\in \cI,\nonumber\\
\quad &\text{so that }\sum_{\bm\in \cI} \red{\mu}(\Gamma_{\bm})= \red{\mu}(\Gamma).
\end{align}
One approach is to choose \red{subsets} with a fixed level of refinement, i.e.\ to take $\cI=I_\ell$ for some fixed $\ell\in\Nnotsn$. However, in the case of a non-\red{homogeneous} IFS, \red{subsets} chosen in this way may differ significantly in size. 
An alternative approach is to choose \red{subsets} with approximately equal diameter, taking $\cI=L_h(\Gamma)$ for some fixed $h>0$, where
\begin{align}
L_{h}(\Gamma) := \big\{&\bm=(m_1,\ldots,m_\ell)\in\cup_{\ell'\in\Nnotsn}I_{\ell'} :\nonumber\\ &\diam(\Gamma_{\bm})\leq h \text{ and } \diam(\Gamma_{(m_1,\ldots,m_{\ell-1})})>h\big\},\label{eq:Lh_def}
\end{align}
with $\Gamma_{(m_1,.\ldots,m_{\ell-1})}$ replaced by $\Gamma$ when $\ell=1$ and $L_h(\Gamma):=\{0\}$ when $h\geq \diam(\Gamma)$ \red{(recall our convention that $\Gamma_0=\Gamma$)}. 
See Figures~\ref{fig:Koch_decomp} and \ref{fig:Lh} for illustrations of the decomposition $L_h\GG$ for the Koch snowflake and a non-\red{homogeneous} Cantor set. 
If $\Gamma$ is \red{homogeneous} then for $0<h\leq \diam(\Gamma)$ we have $L_h(\Gamma)=I_\ell$, where 
\begin{align}
\label{eq:rho_ell}
\rho^\ell\diam(\Gamma)\leq h < \rho^{\ell-1}\diam(\Gamma), \,\,\text{ i.e. }\,\,\ell = \left\lceil \frac{\log(h/\diam(\Gamma))}{\log{\rho}} \right\rceil.
\end{align}

\begin{figure}[t!]
\centering
\subfigure[$h=0.4$, $|L_h(\Gamma)|=55$\label{subfig:Koch0p4}]{%
      \includegraphics[width=0.47\textwidth,clip,trim=90 30 90 20]{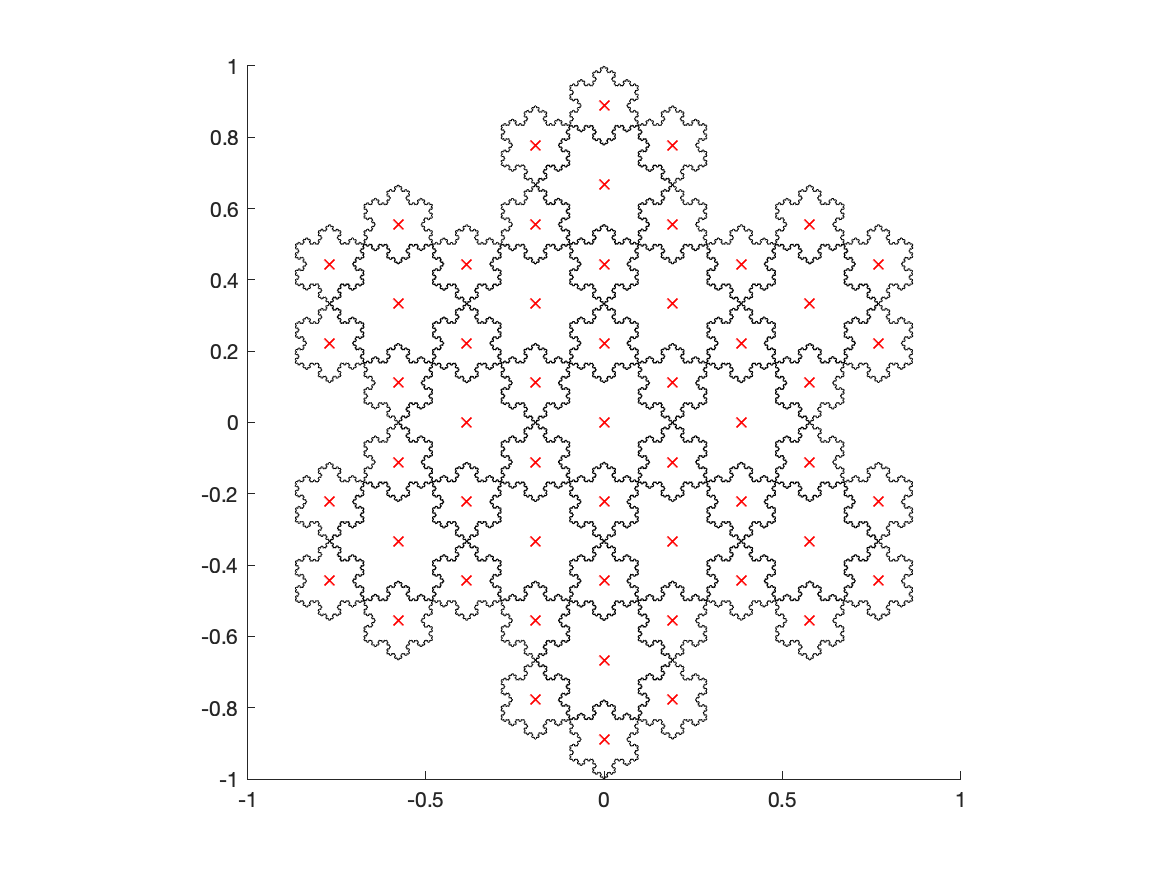}   
    }
    \subfigure[$h=0.3$, $|L_h(\Gamma)|=133$\label{subfig:Koch0p3}]{%
\includegraphics[width=0.47\textwidth,clip,trim=90 30 90 20]{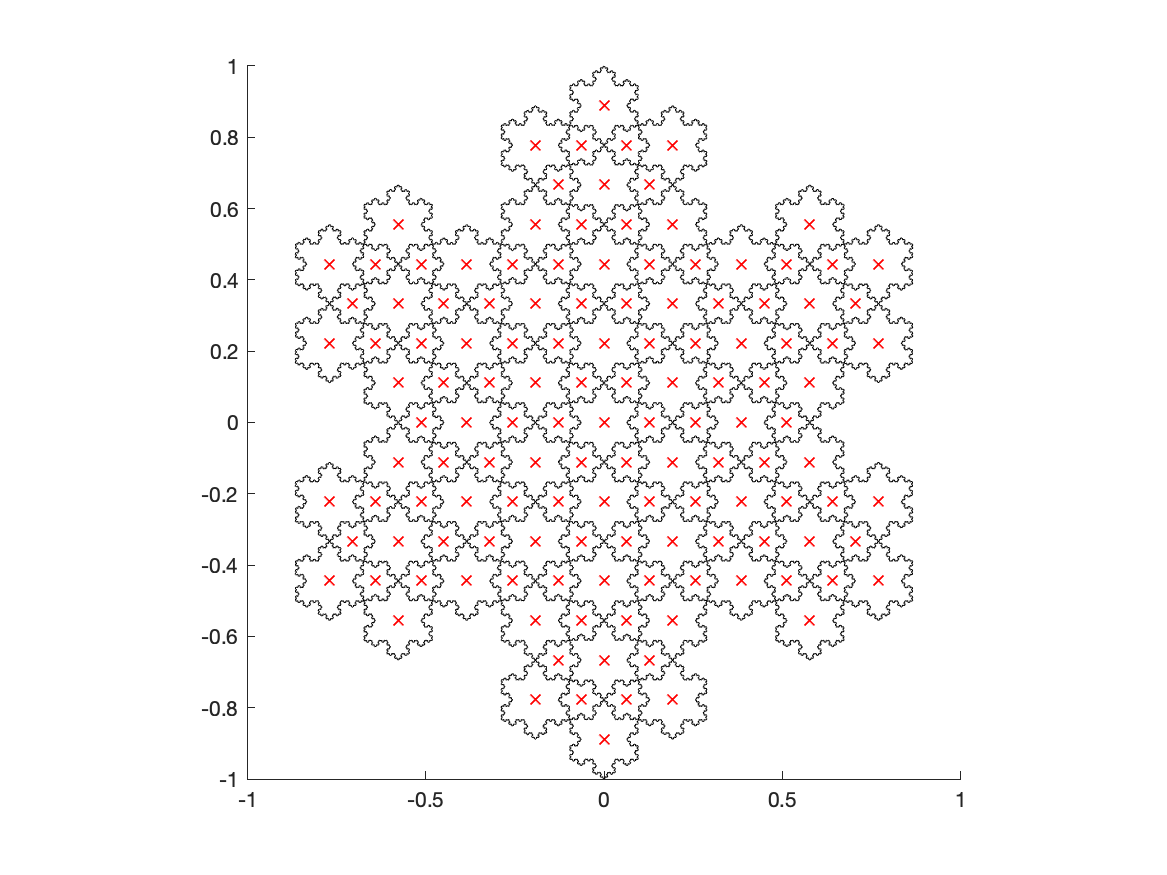}}
\caption{Examples of the \red{partitioning} \eqref{eq:index} corresponding to the index set $\cI=L_h(\Gamma)$ defined by \eqref{eq:Lh_def} in the case where $\Gamma\subset\Rnotsn^2$ is the Koch snowflake. 
The barycentres (defined by \eqref{eq:mid_def}) of the self-similar \red{subset}s in the \red{partitioning} are indicated with red crosses.
The Koch snowflake can be written as a non-\red{homogeneous} IFS attractor with $M=7$, $s_m(x,y)=\frac13(x,y)+\frac23(\cos\alpha_m,\sin\alpha_m)$ with $\alpha_m=\frac{(2m-1)\pi}6$ for $m=1,\ldots,6$, and $s_7(x,y)=(\frac12x-\frac1{2\sqrt3}y,\frac1{2\sqrt3}x+\frac12y )$ \red{(so that $\rho_7=1/\sqrt{3}$).}
It satisfies the OSC \eqref{oscfirst} with $O$ equal to the interior of $\Gamma$.}
\label{fig:Koch_decomp}
\end{figure}

\begin{figure}[tb!]
\centering
\fbox{\begin{tikzpicture}[scale=8]
\def\hh{.015}\def\gap{-.05}
\fill(0,0)rectangle(1,\hh);
\fill(0,\gap)rectangle(1/2,\gap+\hh);
\fill(3/4,\gap)rectangle(1,\gap+\hh);
\fill(0,2*\gap)rectangle(1/4,2*\gap+\hh);
\fill(3/8,2*\gap)rectangle(1/2,2*\gap+\hh);
\fill(3/4,2*\gap)rectangle(3/4+1/8,2*\gap+\hh);
\fill[red](15/16,2*\gap)rectangle(1,2*\gap+\hh);
\fill(0,3*\gap)rectangle(1/8,3*\gap+\hh);
\fill[red](3/16,3*\gap)rectangle(1/4,3*\gap+\hh);
\fill[red](3/8,3*\gap)rectangle(3/8+1/16,3*\gap+\hh);
\fill[red](15/32,3*\gap)rectangle(1/2,3*\gap+\hh);
\fill[red](3/4,3*\gap)rectangle(3/4+1/16,3*\gap+\hh);
\fill[red](3/4+3/32,3*\gap)rectangle(3/4+1/8,3*\gap+\hh);
\draw[](3/4+3/16,3*\gap)rectangle(3/4+3/16+1/32,3*\gap+\hh);
\draw[](63/64,3*\gap)rectangle(1,3*\gap+\hh);
\fill[red](0,4*\gap)rectangle(1/16,4*\gap+\hh);
\fill[red](3/32,4*\gap)rectangle(1/8,4*\gap+\hh);
\draw[](3/16,4*\gap)rectangle(3/16+1/32,4*\gap+\hh);
\draw[](15/64,4*\gap)rectangle(1/4,4*\gap+\hh);
\draw[](3/8,4*\gap)rectangle(3/8+1/32,4*\gap+\hh);
\draw[](3/8+3/64,4*\gap)rectangle(3/8+1/16,4*\gap+\hh);
\draw[](3/8+3/32,4*\gap)rectangle(3/8+3/32+1/64,4*\gap+\hh);
\draw[](63/128,4*\gap)rectangle(1/2,4*\gap+\hh);
\draw[](3/4+0,4*\gap)rectangle(3/4+1/32,4*\gap+\hh);
\draw[](3/4+3/64,4*\gap)rectangle(3/4+1/16,4*\gap+\hh);
\draw[](3/4+3/32,4*\gap)rectangle(3/4+3/32+1/64,4*\gap+\hh);
\draw[](3/4+15/128,4*\gap)rectangle(3/4+1/8,4*\gap+\hh);
\draw[](3/4+3/16,4*\gap)rectangle(3/4+3/16+1/64,4*\gap+\hh);
\draw[](3/4+3/16+3/128,4*\gap)rectangle(3/4+3/16+1/32,4*\gap+\hh);
\draw[](3/4+3/16+3/64,4*\gap)rectangle(3/4+3/16+3/64+1/128,4*\gap+\hh);
\draw[](3/4+63/256,4*\gap)rectangle(3/4+1/4,4*\gap+\hh);
\draw(-.17,0)node{$\Hull(\Gamma)$:};
\draw(-.18,\gap)node{$\Hull(\Gamma_{\bm}),\bm\in I_1$: };
\draw(-.18,2*\gap)node{$\Hull(\Gamma_{\bm}),\bm\in I_2$: };
\draw(-.18,3*\gap)node{$\Hull(\Gamma_{\bm}),\bm\in I_3$: };
\draw(-.18,4*\gap)node{$\Hull(\Gamma_{\bm}),\bm\in I_4$:};
\draw[decorate,decoration={brace,amplitude=3}](.08,4.3*\gap)--(0,4.3*\gap); \draw(.04,5*\gap)node{$h$};
\end{tikzpicture}}
\caption{A depiction of the index set $L_h(\Gamma)$ and of $\UnionHull$ for the non-\red{homogeneous} Cantor set $\Gamma\subset\Rnotsn$ with $s_1(x)=\frac12x$, $s_2(x)=\frac14x+\frac34$.
Each segment represents the convex hull of some $\Gamma_{\bm}$.
Given $h\in[1/16,1/8)$, the multi-indices $\bm\in L_h\GG$ are those corresponding to the red segments: these are the \red{segment}s of length $\le h$ whose parent \red{segment} has length $>h$.
The set $\UnionHull$ is the union of the red segments.
}
\label{fig:Lh} 
\end{figure}

\section{Barycentre rule for regular integrals}\label{sec:smooth_quad}

We now present our composite barycentre rules for the evaluation of regular integrals of the form \eqref{eq:single} and \eqref{eq:double}. 

\subsection{Single integrals}
\label{sec:single}
We first consider the single integral \eqref{eq:single}. Given a partitioning \eqref{eq:index} of $\Gamma$ into self-similar \red{subsets}, our quadrature nodes are the barycentres of the \red{subsets} (for an illustration in the case of the Koch snowflake see Figure \ref{fig:Koch_decomp}), computed with respect to the measure $\red{\mu}$, and the weights are the measures of the \red{subsets}.

\begin{defn}[Barycentre rule for single integrals]
\label{def:single}
Let $\Gamma$ \red{and $\mu$} be as in Subsections \ref{sec:IFS} \red{and \ref{sec:scaling}}, let $\cI\subset\cup_{\ell\in\Nnotsn}I_\ell$ be an index set satisfying \eqref{eq:index}, and let $f:\Gamma\to\Cnotsn$ be continuous. 
Then for the approximation of the integral 
\begin{equation*}%
I_\Gamma[f]:=\int_\Gamma f(x)\,\rd\red{\mu}(x)
\end{equation*}
we define the \textit{barycentre rule}
\begin{equation}
\label{eq:BaryQuad}
Q_\Gamma[f]:=\sum_{\bm\in \cI}w_{\bm}f(x_{\bm}),
\end{equation}
where, for $\bm\red{=(m_1,\ldots,m_l)}\in \cI$, 
\begin{equation}\label{eq:mid_def}
x_{\bm}  :=\frac{\int_{\Gamma_{\bm}}x\,\rd\red{\mu}(x)}{\int_{\Gamma_{\bm}}\rd\red{\mu}(x)}
=\frac{\int_{\Gamma_{\bm}}x\,\rd\red{\mu}(x)}{\red{\mu}(\Gamma_{\bm})}
\end{equation}
and
\begin{equation}\label{eq:weights_def}
w_{\bm}:=\red{\mu}(\Gamma_{\bm}) 
.
\end{equation}
The number of weights and nodes in this approximation is $|\cI|$.
\end{defn}

\begin{rem}
While it always holds that $x_{\bm}\in\Hull(\Gamma_{\bm})$ (by the supporting hyperplane theorem), it does not in general hold that $x_{\bm}\in\Gamma_{\bm}$ (the middle-third Cantor set provides a counterexample). 
\end{rem}

\red{The weights $w_{\bm}$ in \eqref{eq:weights_def} can be computed using  \eqref{eq:sim_measure_general} 
as
\begin{align}
\label{eq:weights_general}
w_{\bm}= \left(\prod_{i=1}^{\ell}\red{p}_{m_i}\right)\red{\mu}(\Gamma).
\end{align}
}
While the barycentres $x_{\bm}$ are defined a priori in terms of integrals with respect to $\red{\mu}$, the following result shows how they can be computed using only information about the similarities $(s_1,\ldots,s_M)$. \red{This result coincides with the first step in the recursive procedure described in \cite[2.5.2]{kunze2011fractal} and \cite[Section 2]{Ma:96} for the calculation of moments of invariant measures.} 

\begin{prop}\label{th:barychuckle}
The barycentres $x_{\bm}$ defined by \eqref{eq:mid_def} can be evaluated as
\begin{align}
\label{eq:xm_formula}
x_{\bm} = s_{\bm}(x_\Gamma), \quad \bm\in \cI,
\end{align}
where $x_\Gamma:=\frac{\int_{\Gamma}x\,\rd\red{\mu}(x)}{\int_{\Gamma}\rd\red{\mu}(x)}=\int_{\Gamma}x\,\rd\red{\mu}(x)$ is the barycentre of $\Gamma$, which can be evaluated as
\begin{align}
\label{eq:xGamma_formula}
x_\Gamma = \Bigg(I - \sum_{m=1}^M\red{p_m}\rho_mA_{m}\Bigg)^{-1}\bigg(\sum_{m=1}^M\red{p_m}\delta_m\bigg),
\end{align}
where $I$ is the $n\times n$ identity matrix and $\rho_m$, $A_m$ and $\delta_m$, $m=1,\ldots,M$, are as in \eqref{eq:affine}.
\end{prop}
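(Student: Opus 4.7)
The plan is to prove the two formulas in sequence, with \eqref{eq:xm_formula} following quickly from the scaling property \eqref{eq:cov} of \S\ref{sec:scaling}, and \eqref{eq:xGamma_formula} following from self-similarity combined with \eqref{eq:xm_formula} applied at the first level.

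For \eqref{eq:xm_formula}, I would apply the change of variables \eqref{eq:cov} componentwise to the vector-valued integral defining $x_{\bm}$:
\begin{equation*}
\int_{\Gamma_{\bm}} x \,\rd\cH^d(x) = \bigg(\prod_{i=1}^\ell \rho^d_{m_i}\bigg) \int_\Gamma s_{\bm}(x')\,\rd\cH^d(x').
\end{equation*}
Since $s_{\bm}$ is itself an affine map of the form $x' \mapsto \tilde{\rho}\tilde{A}x' + \tilde{\delta}$, I can pull the linear part outside the integral and factor out $\tilde{\delta}\cH^d(\Gamma)$, obtaining $s_{\bm}(x_\Gamma)\cH^d(\Gamma)$. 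Using \eqref{eq:sim_measure} to identify the resulting prefactor as $\cH^d(\Gamma_{\bm})$ and dividing through then yields \eqref{eq:xm_formula}.

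For \eqref{eq:xGamma_formula}, I would exploit $\Gamma = \bigcup_{m=1}^M \Gamma_m$ together with self-similarity ($\cH^d(\Gamma_m \cap \Gamma_{m'})=0$ for $m\neq m'$, from \S\ref{sec:IFS}) to split the defining integral for $x_\Gamma$ as a sum over the $\Gamma_m$. Applying \eqref{eq:xm_formula} at the first level gives
\begin{equation*}
x_\Gamma \,\cH^d(\Gamma) = \sum_{m=1}^M s_m(x_\Gamma)\,\cH^d(\Gamma_m) = \bigg(\sum_{m=1}^M \rho_m^d s_m(x_\Gamma)\bigg)\cH^d(\Gamma),
\end{equation*}
and substituting $s_m(x_\Gamma)=\rho_m A_m x_\Gamma + \delta_m$ yields a linear system for $x_\Gamma$ which rearranges into the claimed form.

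The only subtle point is that the matrix $I - \sum_{m=1}^M \rho_m^{d+1} A_m$ must be invertible for the formula to make sense. I would settle this by a simple norm bound: since each $A_m$ is orthogonal and $\rho_m \in (0,1)$, we have
\begin{equation*}
\bigg\|\sum_{m=1}^M \rho_m^{d+1}A_m\bigg\| \leq \sum_{m=1}^M \rho_m^{d+1} < \sum_{m=1}^M \rho_m^d = 1,
\end{equation*}
where the last equality is \eqref{eq:dfirst}. A Neumann-series argument then gives invertibility. I do not expect any genuine obstacle — the main thing to be careful about is not to confuse the two levels of self-similarity (the global one used for \eqref{eq:xGamma_formula} and the refined one for \eqref{eq:xm_formula}), and to correctly track the powers $\rho_m^d$ versus $\rho_m^{d+1}$ that arise from the measure scaling and the similarity scaling respectively.
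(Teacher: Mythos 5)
Your proposal is correct and follows essentially the same route as the paper: change of variables \eqref{eq:cov} plus affinity of the similarity for \eqref{eq:xm_formula}, the level-one splitting $\Gamma=\bigcup_m\Gamma_m$ combined with \eqref{eq:sim_measure} to derive the linear system for $x_\Gamma$, and the identical operator-norm bound $\sum_m\rho_m^{d+1}<\sum_m\rho_m^d=1$ via \eqref{eq:dfirst} for invertibility. The only (immaterial) difference is that you treat a general multi-index $\bm$ directly by observing that $s_{\bm}$ is itself an affine similarity, whereas the paper proves the case $\ell=1$ and then invokes induction on the length of $\bm$.
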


\begin{proof}
To prove \eqref{eq:xm_formula} we first consider the case where $s_{\bm}=s_m$ for some $m\in\{1,\ldots,M\}$, for which by \eqref{eq:affine}, \eqref{eq:cov_general} and \eqref{eq:sim_measure_general} 
we have 
	\begin{align}
	x_m 
&\red{:=\frac{\int_{\Gamma_{m}}x\,\rd\mu(x)}{\int_{\Gamma_{m}}\rd\mu(x)}	}
	= 
	\frac{\red{p_m}\int_{\Gamma} s_m(x) \,\rd\red{\mu}(x)}{\red{p_m}\int_{\Gamma}\rd \red{\mu}(x)}
	= \frac{ \rho_mA_m\int_{\Gamma}x \,\rd\red{\mu}(x)}{\int_{\Gamma}\rd \red{\mu}(x)} + \frac{\delta_m\int_{\Gamma}  \,\rd\red{\mu}(x)}{\int_{\Gamma}\rd \red{\mu}(x)}\nonumber\\
	&= \rho_mA_m x_\Gamma + \delta_m 
	= s_m(x_\Gamma).\label{eq:barymap}
	\end{align}
The general result follows by induction on the length of the vector index $\bm$.

To prove \eqref{eq:xGamma_formula} we note that, by \eqref{eq:mid_def} and \eqref{eq:sim_measure_general},
	\[
	x_\Gamma= \frac{\sum_{m=1}^M\int_{\Gamma_m}x\,\rd\red{\mu}(x)}{\red{\mu}(\Gamma)}= \sum_{m=1}^M\frac{\red{\mu}(\Gamma_{m})x_m}{\red{\mu}(\Gamma)} = \sum_{m=1}^M\red{p_m}x_m = \sum_{m=1}^M \red{p_m} s_m(x_\Gamma).
	\]
Using \eqref{eq:barymap}, this can be written as
\begin{align*}
\bigg(I- \sum_{m=1}^M\red{p_m}\rho_mA_m\bigg) x_\Gamma = \sum_{m=1}^M\red{p_m}\delta_m,
\end{align*}
from which \eqref{eq:xGamma_formula} follows by matrix inversion. The invertibility of the matrix $\big(I- \sum_{m=1}^M\red{p_m}\rho_mA_m\big)$ follows from the fact that $\|A_m\|_2=1$ and $0<\rho_m<1$ for each $m=1,\ldots,M$, so that, recalling \eqref{eq:weightsum},
\[\left\|\sum_{m=1}^M\red{p_m}\rho_mA_m\right\|_2\leq \sum_{m=1}^M\red{p_m}\rho_m\|A_m \|_2 = \sum_{m=1}^M\red{p_m}\rho_m<\sum_{m=1}^M\red{p_m} = 1.\]
\end{proof}

\begin{rem}
\label{rem:HdGamma}
Evaluation of the quadrature weights $w_{\bm}$ defined in \eqref{eq:weights_def} requires knowledge of $\red{\mu}(\Gamma)$. If $\red{\mu}(\Gamma)$ is unknown then the quadrature rule \eqref{eq:BaryQuad} can only be evaluated up to the unknown factor $\red{\mu}(\Gamma)$. As mentioned in Section \ref{sec:intro}, \red{even in the special case $\mu=\cH^d|_\Gamma$} the exact value of $\cH^d(\Gamma)$ is known only in certain special cases, and, to our knowledge, only for examples where $d\leq 1$. In the Hausdorff BEM application that motivates this paper, this is unproblematic as 
one can simply work with an appropriately normalised measure (see the discussion in Section \ref{sec:intro} and \cite{HausdorffBEM}).
However, for completeness we comment briefly on the current state of knowledge regarding $\cH^d(\Gamma)$.
The best-studied examples are Cantor-type sets in $\Rnotsn$ ($n=1$). Important early work in this area includes that of Marion \cite{marion1986mesure,marion1987mesures} and Falconer \cite{Fal85}, where it was proved that $\cH^d(\Gamma)=1$ for a large class of Cantor-type sets including the classical Cantor sets defined by \eqref{eq:CantorDef} \cite[Thm.~1.14--1.15]{Fal85}; for more recent related results see e.g.\ \cite{AySt:99} and \cite{Zuberman2019}.
For $n>1$ it appears that the exact value of the Hausdorff measure of even the simplest IFS attractors is known
only for $d\le1$, see e.g.\ \cite{XiongZhou2005}, where it is proved that for the Cantor dust defined in Example (I) of Table \ref{t:disjointness}, $\cH^d(\Gamma)=2^{d/2}$ for $0<\rho\leq 1/4$ ($0<d\leq 1$) \cite[Cor.~1]{XiongZhou2005}; see also the earlier paper \cite{zhou1999hausdorff} where the case $\rho=1/4$ was considered.
For $n>1$ and $d>1$ it appears that only approximate results are available. For instance, when $\Gamma\subset\Rnotsn^2$ is the Sierpi\'{n}ski triangle, it is known that $0.77\leq\cH^d(\Gamma)\leq0.81794$ \cite{Mo:09}. 
One complication in the case $n>1$ is that even if $\Gamma$ is a Cartesian product of lower-dimensional IFS attractors, as is the case \am{\st{e.g.\ }} for the Cantor dust in Example (I) of Table \ref{t:disjointness}, the measure of $\Gamma$ cannot be computed as the product of lower-dimensional measures, since for sets $\Gamma_1$ and $\Gamma_2$ of dimension $d_1$ and $d_2$ respectively, in general we do not have $\cH^{d_1}(\Gamma_1)\times\cH^{d_2}(\Gamma_2) = \cH^{{d_1} + {d_2}}(\Gamma_1\times\Gamma_2)$ \cite[Proposition~7.1]{Fal}. 
\end{rem}

\subsection{Double integrals}
\label{sec:double}
Double integrals of the form \eqref{eq:double} can be treated by iterating the barycentre rule in the obvious way. 

\begin{defn}[Barycentre rule for double integrals]
\label{def:double}
Let $\Gamma\subset \Rnotsn^n$ and $\Gamma'\subset \Rnotsn^{n'}$ be as in Subsection \ref{sec:IFS} \red{(possibly with different Hausdorff dimensions), 
and let $\mu$ and $\mu'$ be invariant measures on $\Gamma$ and $\Gamma'$ respectively, as in Subsection \ref{sec:scaling}.}  
Let $\cI$ and $\cI'$ be index sets satisfying \eqref{eq:index} for $\red{(\Gamma,\mu)}$ and $\red{(\Gamma',\mu')}$ respectively, and let $f:\Gamma\times\Gamma'\to\Cnotsn$ be continuous. 
Then for the approximation of the iterated integral 
\begin{equation*}%
I_{\Gamma,\Gamma'}[f]:=\int_\Gamma\int_{\Gamma'} f(x,y)\,\rd\red{\mu'}(y)\,\rd\red{\mu}(x)
\end{equation*}
we define the \textit{iterated barycentre rule}
\begin{equation}\label{eq:QGG}
Q_{\Gamma,\Gamma'}[f]:= \sum_{\bm\in \cI} \sum_{\bm'\in \cI'} w_{\bm} w'_{\bm'}f(x_{\bm},x'_{\bm'}),
\end{equation}
where, for $\bm\in\cI$, $x_{\bm}$ and $w_{\bm}$ are defined by \eqref{eq:mid_def} and \eqref{eq:weights_def}, and, for $\bm'\in\cI'$, $x'_{\bm'}$ and $w'_{\bm}$ are defined by the analogous formulas involving $\Gamma'$ \red{and $\mu'$}.
\end{defn}

\subsection{Error estimates}
\label{sec:Errors}

When the integrands are sufficiently smooth, error estimates for the quadrature rules in Definitions \ref{def:single} and \ref{def:double} can be derived by standard Taylor series arguments. The result for single integrals (Definition \ref{def:single}) is presented in Theorem \ref{th:MidLip1} below. 
Before stating the theorem, we introduce some notation. Given a set $E\subset\Rnotsn^n$ and a function $f:E\to\Cnotsn$ we define %
\begin{align*}%
\cL_{0,E}[f]:=\sup_{x\neq y\in E}\frac{|f(x)-f(y)|}{|x-y|}.
\end{align*}
If $f$ is differentiable in an open set $\Omega\supset E$, we denote its gradient by $\nabla f:\Omega\to \Cnotsn^n$, and define
\begin{align*}%
\cL_{1,E}[f]:=\sup_{x\neq y\in E}\frac{|\nabla f(x)-\nabla f(y)|}{|x-y|}.
\end{align*}
Note that we are allowing the possibility that $\cL_{0,E}[f]$ and $\cL_{1,E}[f]$ are infinite. 
If $f$ is twice differentiable in $\Omega$, we denote its Hessian by $H\!f:\Omega\to \Cnotsn^{n\times n}$.
For $\alpha\in \Nnotsn_0^n$, $D^\alpha$ denotes standard multi-index notation for partial derivatives. 
\red{Finally, given $h>0$ we define
\begin{align}
\label{eq:LhHull}
\UnionHull:=\bigcup_{\bm\in L_h(\Gamma)}\Hull(\Gamma_{\bm}).
\end{align}}

\begin{thm}\label{th:MidLip1}
Let $\Gamma$ \red{and $\mu$} be as in Subsections \ref{sec:IFS} \red{and \ref{sec:scaling}}. Let $h>0$, and let $\UnionHull$ be as in \eqref{eq:LhHull}. 
Suppose that $f:\UnionHull\to\Cnotsn$, and let $Q_\Gamma^h$ denote the barycentre rule of Definition \ref{def:single} with $\cI=L_h(\Gamma)$. 
Then, \red{with $E$ denoting $|I_\Gamma[f]-Q_\Gamma^h[f]|$}, 
\begin{enumerate}[(i)]
\item %
$
\red{E}\leq h \red{\mu}(\Gamma)\max_{\bm\in L_h(\Gamma)}\cL_{0,\Hull(\Gamma_{\bm})}[f]$.  
\item If $f$ is differentiable in an open set $\Omega\supset \UnionHull$ then 
\begin{align*}
\red{E}
&\leq h \red{\mu}(\Gamma)\sup_{x\in \UnionHull}|\nabla f(x)|\\
&\leq \sqrt{n}h\red{\mu}(\Gamma)\sup_{x\in \UnionHull}\max_{\substack{\alpha\in\Nnotsn_0^n\\|\alpha|=1}}|D^\alpha f(x)|
\end{align*}
and
\[
\red{E}
\leq h^2 \red{\mu}(\Gamma) \max_{\bm\in L_h(\Gamma)}\cL_{1,\Hull(\Gamma_{\bm})}[f].\]  
\item If $f$ is twice differentiable in an open set $\Omega\supset \UnionHull$ then 
\begin{align*}
\red{E}
&\leq \frac{h^2}{2}\red{\mu}(\Gamma)\sup_{x\in\UnionHull}\|H\!f(x)\|_2\\
&\leq \frac{nh^2}{2}\red{\mu}(\Gamma) \sup_{x\in\UnionHull}\max_{\substack{\alpha\in\Nnotsn_0^n\\|\alpha|=2}}|D^\alpha f(x)|.
\end{align*}
\end{enumerate}
\end{thm}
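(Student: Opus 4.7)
The plan is to start from the splitting \eqref{eq:index} with $\cI = L_h(\Gamma)$ so that
\[
I_\Gamma[f] - Q_\Gamma^h[f] = \sum_{\bm \in L_h(\Gamma)} \int_{\Gamma_{\bm}} \big(f(x) - f(x_{\bm})\big) \,\rd\cH^d(x),
\]
and to estimate each summand using that $x, x_{\bm} \in \Hull(\Gamma_{\bm})$ with $\diam(\Hull(\Gamma_{\bm})) = \diam(\Gamma_{\bm}) \le h$. The key observation that unlocks the $h^2$ bounds in (ii) and (iii) is that the definition \eqref{eq:mid_def} of $x_{\bm}$ yields the \emph{barycentre identity}
\[
\int_{\Gamma_{\bm}} (x - x_{\bm}) \,\rd\cH^d(x) = 0,
\]
so that for any constant vector $c \in \C^n$ the linear correction $c \cdot (x - x_{\bm})$ may be freely subtracted from the integrand without altering the integral. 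This identity is what allows the first-order Taylor term to be killed and gives the extra factor of $h$.

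For part (i), the Lipschitz bound gives $|f(x) - f(x_{\bm})| \le \cL_{0,\Hull(\Gamma_{\bm})}[f]\,|x - x_{\bm}| \le h\,\cL_{0,\Hull(\Gamma_{\bm})}[f]$, and summing with $\sum_{\bm} \cH^d(\Gamma_{\bm}) = \cH^d(\Gamma)$ from \eqref{eq:index} yields the claim. The first bound in (ii) then follows immediately because on a convex set the mean value theorem (applied componentwise to $f$) gives $\cL_{0,\Hull(\Gamma_{\bm})}[f] \le \sup_{\Hull(\Gamma_{\bm})}|\nabla f|$; the coordinate-wise form uses the trivial inequality $|\nabla f| \le \sqrt{n}\,\max_{|\alpha|=1}|D^\alpha f|$.

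For the second (improved) bound in (ii), I write, via the fundamental theorem of calculus,
\[
f(x) - f(x_{\bm}) = \int_0^1 \nabla f\big(x_{\bm} + t(x - x_{\bm})\big) \cdot (x - x_{\bm}) \,\rd t,
\]
and then use the barycentre identity to subtract off $\nabla f(x_{\bm}) \cdot (x - x_{\bm})$ on each $\Gamma_{\bm}$ at no cost. The integrand becomes $\int_0^1 [\nabla f(x_{\bm} + t(x-x_{\bm})) - \nabla f(x_{\bm})] \cdot (x - x_{\bm})\,\rd t$, whose absolute value is at most $\cL_{1,\Hull(\Gamma_{\bm})}[f] \cdot |x - x_{\bm}|^2 \int_0^1 t\,\rd t \le \tfrac{h^2}{2}\cL_{1,\Hull(\Gamma_{\bm})}[f]$; summing gives the stated bound (with room to spare on the constant).

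For part (iii), I apply a second-order Taylor expansion with integral remainder, namely $f(x) = f(x_{\bm}) + \nabla f(x_{\bm}) \cdot (x - x_{\bm}) + \tfrac{1}{2}(x-x_{\bm})^T H\!f(\xi)(x-x_{\bm})$ for some $\xi$ on the segment between $x_{\bm}$ and $x$ (both in $\Hull(\Gamma_{\bm})$ by convexity). The barycentre identity annihilates the gradient term, and the quadratic remainder is bounded by $\tfrac{1}{2}\|H\!f(\xi)\|_2\,|x - x_{\bm}|^2 \le \tfrac{h^2}{2}\sup_{\UnionHull}\|H\!f\|_2$. Summation over $\bm \in L_h(\Gamma)$ and the standard inequality $\|H\!f\|_2 \le \|H\!f\|_F \le n\,\max_{|\alpha|=2}|D^\alpha f|$ complete the argument. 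The only genuinely non-routine step is spotting the barycentre identity and exploiting it to get the $h^2$ rate; everything else is elementary Taylor estimation and norm equivalence.
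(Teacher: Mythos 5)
Your proposal is correct and follows essentially the same route as the paper's proof: split the error over the sub-components $\Gamma_{\bm}$, use the Lipschitz/mean-value bounds for (i) and the first estimate in (ii), and exploit the vanishing of $\int_{\Gamma_{\bm}}(x-x_{\bm})\,\rd\cH^d(x)$ to annihilate the linear Taylor term and obtain the $O(h^2)$ bounds in (ii) and (iii). The only (harmless) deviation is that for the second bound in (ii) you use the integral form of the first-order remainder rather than the paper's mean-value form, which even yields the slightly sharper constant $h^2/2$ in place of $h^2$.
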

\begin{proof}
(i) Elementary estimation, combined with \eqref{eq:index}, gives
\begin{align*}
\label{}
|I_\Gamma[f]-Q_\Gamma^h[f]| &\leq \sum_{\bm\in L_h(\Gamma)}\int_{\Gamma_{\bm}}|f(x)-f(x_{\bm})|\,\rd\red{\mu}(x)\\
&\leq \sum_{\bm\in L_h(\Gamma)} \red{\mu}(\Gamma_{\bm}) \cL_{0,\Hull(\Gamma_{\bm})}[f]\diam(\Gamma_{\bm})\notag\\
&\leq h \red{\mu}(\Gamma)\max_{\bm\in L_h(\Gamma)}\cL_{0,\Hull(\Gamma_{\bm})}[f].
\end{align*}

(ii) The first bound follows from part (i) and the fact that, by the mean value theorem, $\cL_{0,\Hull(\Gamma_{\bm})}[f]$ $\leq \sup_{x\in \Hull(\Gamma_{\bm})} |\nabla f(x)|$. 
For the second bound we also apply the mean value theorem, noting for each $\bm\in L_h(\Gamma)$ and $x\in \Gamma_m$ there exists a point $\xi_{\bm}(x)$ on the segment between $x$ and $x_{\bm}$ such that $f(x)-f(x_{\bm})=\nabla f(\xi_{\bm}(x))^T(x-x_{\bm})$. Hence
\begin{align*}
\label{}
\int_{\Gamma_{\bm}} f(x)-f(x_{\bm})\,\rd\red{\mu}(x) 
&= \int_{\Gamma_{\bm}}\nabla f(x_{\bm})^T(x-x_{\bm})\,\rd\red{\mu}(x) \\
& \qquad + \int_{\Gamma_{\bm}}\big(\nabla f(\xi_{\bm}(x))-\nabla f(x_{\bm})\big)^T(x-x_{\bm})\,\rd\red{\mu}(x).
\end{align*}
The first integral on the right-hand side vanishes by the definition of $x_{\bm}$, with the result that, again using \eqref{eq:index}, %
\begin{align*}
\label{}
|I_\Gamma[f]-Q_\Gamma^h[f]| &\leq 
\sum_{\bm\in L_h(\Gamma)} \red{\mu}(\Gamma_{\bm}) \cL_{1,\Hull(\Gamma_{\bm})}[f]\diam(\Gamma_{\bm})^2\\
&\leq h^2 \red{\mu}(\Gamma)\max_{\bm\in L_h(\Gamma)}\cL_{1,\Hull(\Gamma_{\bm})}[f].
\end{align*}

(iii) By Taylor's theorem, for each $\bm\in L_h(\Gamma)$ and $x\in \Gamma_m$ there exists a point $\xi_{\bm}(x)$ on the segment between $x$ and $x_{\bm}$ such that $f(x)-f(x_{\bm})= \nabla f(x_{\bm})^T(x-x_{\bm}) + \frac{1}{2}(x-x_{\bm})^TH\!f(\xi_{\bm}(x))(x-x_{\bm})$. Again, the linear term integrates to zero, so %
\begin{align*}
\label{}
\left|\int_{\Gamma_{\bm}} f(x)-f(x_{\bm})\,\rd\red{\mu}(x)\right| 
\leq \frac{1}{2}\red{\mu}(\Gamma_{\bm}) \sup_{x\in\Hull(\Gamma_{\bm})}\|H\!f(x)\|_2
\diam(\Gamma_{\bm})^2,
\end{align*}
and the result follows, by bounding $\diam(\Gamma_{\bm})\leq h$, summing over $\bm$ and using \eqref{eq:index}.
\end{proof}

We now consider the double integral case (Definition \ref{def:double}). 
Higher order iterated integrals over the product of arbitrarily many IFS attractors can be analysed similarly, but are not considered here.

\begin{thm}\label{th:MidLip2}
Let $\Gamma\subset \Rnotsn^n$ and $\Gamma'\subset \Rnotsn^{n'}$ be as in Subsection \ref{sec:IFS}, \red{
and let $\mu$ and $\mu'$ be invariant measures on $\Gamma$ and $\Gamma'$ respectively, as in Subsection \ref{sec:scaling}.} Let $h>0$, and let $\UnionHull$ be as in \eqref{eq:LhHull} and $\UnionHull'$ be as in \eqref{eq:LhHull} with $\Gamma$ replaced by $\Gamma'$. 
Suppose that $f:\UnionHull\times \UnionHull'\to\Cnotsn$, and let $Q_{\Gamma,\Gamma'}^h$ denote the barycentre rule of Definition \ref{def:double} with $\cI=L_h(\Gamma)$ and $\cI'=L_h(\Gamma')$. 
Then, \red{with $E$ denoting $|I_{\Gamma,\Gamma'}[f]-Q_{\Gamma,\Gamma'}^h[f]|$}: 
\begin{enumerate}[(i)]
\item %
$
\red{E}
\leq \sqrt{2}\:h 
\red{\mu}(\Gamma)\red{\mu'}(\Gamma')
\max_{(\bm,\bm')\in L_h(\Gamma)\times L_h(\Gamma')}
\cL_{0,\Hull(\Gamma_{\bm})\times\Hull(\Gamma'_{\bm'})}[f].
$
\item If $f$ is differentiable in an open set $\Omega\supset \UnionHull\times \UnionHull'$ 
then 
\begin{align*}
\label{}
\red{E}
&\leq \sqrt{2}\:h 
\red{\mu}(\Gamma)\red{\mu'}(\Gamma')
\sup_{(x,x')\in\UnionHull\times \UnionHull'}|\nabla f(x,x')|\\
&\leq \sqrt{2}\sqrt{n+n'}\;h 
\red{\mu}(\Gamma)\red{\mu'}(\Gamma')
\sup_{(x,x')\in\UnionHull\times \UnionHull'}\max_{\substack{\alpha\in\Nnotsn_0^{n+n'}\\|\alpha|=1}}|D^\alpha f(x,x')| 
\end{align*}
and
\begin{align*}
\red{E}
&\leq 2h^2 
\red{\mu}(\Gamma)\red{\mu'}(\Gamma')
\max_{(\bm,\bm')\in L_h(\Gamma)\times L_h(\Gamma')}
\cL_{1,\Hull(\Gamma_{\bm})\times\Hull(\Gamma'_{\bm'})}[f].\end{align*}
\item\label{pt:2D} If $f$ is twice differentiable in an open set $\Omega\supset \UnionHull\times \UnionHull'$ then 
\begin{align*}
\label{}
\red{E}
&\leq h^2 
\red{\mu}(\Gamma)\red{\mu'}(\Gamma')
\sup_{(x,x')\in\UnionHull\times \UnionHull'}\|H\!f(x,x')\|_2 \\
&\leq (n+n')h^2  \sup_{(x,x')\in\UnionHull\times \UnionHull'}\max_{\substack{\alpha\in\Nnotsn_0^{n+n'}\\|\alpha|=2}}|D^\alpha f(x,x')|.
\end{align*}
\end{enumerate}
\end{thm}
\begin{proof}
Follows similar arguments to those used to prove Theorem \ref{th:MidLip1}, in the setting of $\Rnotsn^{n+n'}$. The extra factors of $\sqrt2$ and $2$ arise because $\diam(E\times E')\leq \sqrt{\diam(E)^2+\diam(E')^2}$ for $E\subset\Rnotsn^n$ and $E'\subset\Rnotsn^{n'}$.
\end{proof}

\begin{rem}
\label{rem:midpoint_choice}
The fact that $x_{\bm}$ is the barycentre of $\Gamma_{\bm}$ only enters in the proof of the $O(h^2)$ estimates in Theorems \ref{th:MidLip1} and \ref{th:MidLip2}. The $O(h)$ estimates remain true if the barycentre $x_{\bm}$ is replaced by any other point of $\Hull(\Gamma_{\bm})$. 
\end{rem}

\begin{rem}\label{rem:ErrVsCost}
The error bounds in Theorem~\ref{th:MidLip1} are written in terms of the discretization parameter $h$, which measures the diameter of the portion of $\Gamma$ on which the integrand is approximated by a constant value.
In practice it is useful to estimate the error in terms of the computational effort of the quadrature rule, writing the error bounds in terms of the number of quadrature points (and thus of the evaluations of the integrand) $N:=|L_h\GG|$.

\red{We shall do this in the special case where $\Gamma$ is homogeneous.%
}
Then $L_h\GG=I_\ell$ for $\ell\in\Nnotsn$ as in \eqref{eq:rho_ell}, so that $N=M^\ell$, and from \eqref{eq:rho_ell} and \eqref{eq:dfirst}, which gives $\rho^d=1/M$, we have
\begin{align}
\label{eq:HBound}
h<\rho^{\ell-1}\diam\GG=\frac{\diam\GG}\rho\;\frac1{M^{\ell/d}}=\frac{\diam\GG}\rho\; N^{-1/d}
=\diam\GG\left(\frac{N}{M}\right)^{-1/d}.
\end{align}
We can substitute either of the last two expressions in place of $h$ in the right-hand sides of the error bounds in Theorem~\ref{th:MidLip1}, obtaining $O(N^{-1/d})$ and $O(N^{-2/d})$ convergence rates, with respect to increasing $N$.
We observe that the lower the Hausdorff dimension $d$ of $\Gamma$, the faster the convergence of the quadrature rule with respect to increasing $N$, \red{all other factors being equal.}

In the double-integral case, we can substitute either of the expressions 
\begin{align*} &\min\bigg\{\frac{\diam\GG}\rho N^{-1/d},\frac{\diam(\Gamma')}{\rho'} (N')^{-1/d'}\bigg\} 
\\&= \min\bigg\{\diam\GG \left(\frac{N}{M}\right)^{-1/d},\diam\GG \left(\frac{N'}{M'}\right)^{-1/d'}\bigg\},\end{align*}
where $N':=|L_h(\Gamma')|$, in place of $h$ in the error estimates of Theorem~\ref{th:MidLip2}. In this case, the computational cost, measured as the number of integrand evaluations, is the product $NN'$.
\end{rem}

\red{The barycentre rule \eqref{eq:BaryQuad} depends on the choice of the index set $\cI$.
The choice $\cI=L_h(\Gamma)$ stipulated in Section~\ref{sec:Errors}, with $L_h(\Gamma)$ as in \eqref{eq:Lh_def}, allocates the quadrature nodes $x_\bm$ only according to the diameter of the associated subsets $\Gamma_\bm\subset\Gamma$, and is motivated by the use of a Taylor-polynomial technique to bound the quadrature error.
This choice is most effective for homogeneous IFSs and Hausdorff measures, where the fraction of the measure associated to each quadrature node is the same. 
For general IFSs and invariant measures, we expect that more sophisticated choices of the index set $\cI$, taking into account the measure of the subsets in the induced partition as well as their diameter, may be more efficient. However, we leave further discussion of this issue to future work.
}

\section{Evaluation of singular integrals}\label{sec:Phi_t}

We now turn to the case where the integrands in \eqref{eq:single} and \eqref{eq:double} are singular. In this section, we show how two classes of singular integrals involving the function $\Phi_t$ (defined in \eqref{eq:Phit}) can be expressed in terms of regular integrals, using the scaling properties from Subsection \ref{sec:scaling} and certain homogeneity properties of $\Phi_t$. 
\red{This will allow us to derive and analyse quadrature rules for the singular integrals based on the barycentre rule described in the previous section.}

\red{\subsection{Homogeneity and bounds on derivatives of \texorpdfstring{$\Phi_t$}{Phi-t}}}
\red{Key to our analysis will be the fact that,} for any $x\neq y\in\Rnotsn^n$ and $\rho>0$,
\begin{equation}\label{eq:PhitObvs}
\Phi_t(\rho x, \rho y)=
\left\{
\begin{array}{cc}
\log\rho + \Phi_t (x,y),&\quad t=0,\\
\rho^{-t}\Phi_t(x,y),&\quad t>0.
\end{array}
\right.
\end{equation}
We note also that $\Phi_t(x,y)$ is smooth as a function of both $x$ and $y$, away from the diagonal $x=y$, on which it is singular. 
More precisely, we shall need the following result concerning derivatives of $\Phi_t$. Here the assumption that the multi-index $\alpha\in \Nnotsn_0^{2n}$ means that $\alpha$ could correspond to differentiation with respect to the components of either $x$ or $y$, 
and
\[a_t:=\begin{cases}2,&t=0,\\t(t+2),&t>0.\end{cases}\] 
\begin{lem}\label{lem:W2bd}
Let $\alpha\in\Nnotsn_0^{2n}$ be a multi-index with $|\alpha|=2$. Then for $x\neq y$
\begin{equation*}%
\left|\rD^{\alpha}\Phi_t(x,y)\right|
\leq
{\dfrac{a_t}{|x-y|^{t+2}}.}
\end{equation*}
\end{lem}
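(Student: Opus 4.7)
The plan is to reduce the problem to bounding the entries of the Hessian of a radial function. Set $z := x - y$ and $r := |z|$, and write $\Phi_t(x,y) = \phi_t(r)$ with $\phi_0(r) = \log r$ and $\phi_t(r) = r^{-t}$ for $t>0$. Since $\partial r/\partial x_i = -\partial r/\partial y_i = z_i/r$, every second-order partial derivative $\mathrm{D}^\alpha \Phi_t$ (for $|\alpha|=2$, $\alpha \in \N_0^{2n}$)---whether a pure $x$-derivative, a pure $y$-derivative, or a mixed $x$-$y$ derivative---agrees, up to a sign, with some entry $H_{ij}$ of the $n\times n$ Hessian of the map $z \mapsto \phi_t(|z|)$. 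Thus it is enough to bound $|H_{ij}|$ for all $i,j \in \{1,\dots,n\}$.

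Next I compute $H$ explicitly. Using the standard formula for the Hessian of a radial function, with $\hat z := z/r$,
\[
H_{ij} \;=\; \phi_t''(r)\,\hat z_i \hat z_j \;+\; \frac{\phi_t'(r)}{r}\bigl(\delta_{ij} - \hat z_i \hat z_j\bigr).
\]
I then substitute the derivatives: for $t=0$, $\phi_0'(r)=1/r$ and $\phi_0''(r)=-1/r^2$; for $t>0$, $\phi_t'(r)=-t\,r^{-t-1}$ and $\phi_t''(r)=t(t+1)\,r^{-t-2}$. This gives a closed form for $H_{ij}$ in each case.

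To finish, I apply the elementary bounds $|\hat z_i \hat z_j| \le 1$ and $|\delta_{ij} - \hat z_i \hat z_j| \le 1$, both of which follow from $|\hat z_k| \le 1$ (the second by checking separately $i=j$, where $\delta_{ii}-\hat z_i^2 = 1 - \hat z_i^2 \in [0,1]$, and $i \ne j$, where $|\hat z_i \hat z_j|\le 1$). This yields
\[
|H_{ij}| \;\le\; |\phi_t''(r)| \;+\; \frac{|\phi_t'(r)|}{r}.
\]
Plugging in the derivatives produces exactly $1/r^2 + 1/r^2 = 2/r^2$ for $t=0$, and $t(t+1)\,r^{-t-2} + t\,r^{-t-2} = t(t+2)\,r^{-t-2}$ for $t>0$, matching the constant $a_t$ in the statement.

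The proof is a direct calculation with no serious obstacle. The only minor point requiring care is the bookkeeping of signs when passing between derivatives in $x_i$, $y_i$, and $z_i$, together with verifying the bound $|\delta_{ij} - \hat z_i \hat z_j| \le 1$ in both the diagonal and off-diagonal cases; both are immediate.
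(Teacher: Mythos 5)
Your proposal is correct and takes essentially the same approach as the paper: both arguments reduce, via a chain-rule computation, to the intermediate bound $|\tilde{\Phi}_t''(r)|+|\tilde{\Phi}_t'(r)|/r$ with $r=|x-y|$, and then insert the explicit derivatives to obtain $a_t/r^{t+2}$. The only difference is cosmetic: you organise the calculation through the Hessian of the radial function in the difference variable $z=x-y$ (using $\partial_{x_i}=\partial_{z_i}=-\partial_{y_i}$), whereas the paper differentiates $r(x,y)$ directly in the $2n$ variables.
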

\begin{proof}
Let $\alpha=\beta+\gamma$ for some multi-indices $\beta,\gamma\in\Nnotsn_0^{2n}$
with $|\beta|=|\gamma|=1$. With $r(x,y):=|x-y|$ for $x,y\in\Rnotsn^n$, for any twice differentiable $F:\Rnotsn_+\to\Cnotsn$ we have 
that for $x\neq y$ 
\begin{equation}
\begin{aligned}%
\rD^{\beta}\big(F(r(x,y))\big) &= F'\big(r(x,y)\big)\big(\rD^\beta r(x,y)\big),\\	
\rD^{\alpha}\big(F(r(x,y))\big) &= 
F''\big(r(x,y)\big)\big(\rD^\beta r(x,y)\big)\big(\rD^\gamma r(x,y)\big)+ F'\big(r(x,y)\big)\big(\rD^{\alpha}r(x,y)\big).
\end{aligned}
\label{eq:FDB}
\end{equation}
From $|x_i-y_i|\leq r(x,y)$ and the values of the following partial derivatives for $i,j=1,\ldots,n$:
\begin{align*}
&\frac{\partial r(x,y)}{\partial x_i} = - \frac{\partial r(x,y)}{\partial y_i}= \frac{(x_i-y_i)}{r(x,y)}, 
\\
&\frac{\partial^2 r(x,y)}{\partial x_i\partial x_j} 
=-\frac{\partial^2 r(x,y)}{\partial x_i\partial y_j}
=\frac{-(x_i-y_i)(x_j-y_j)}{r(x,y)^3},
\qquad i\ne j,\\
&\frac{\partial^2 r(x,y)}{\partial x_i^2} = -\frac{\partial^2 r(x,y)}{\partial x_i\partial y_i}   
=\frac{1}{r(x,y)} - \frac{(x_i-y_i)^2}{r(x,y)^3},
\end{align*}
we obtain
$$
|\rD^\beta r(x,y)|\leq1, \qquad
|\rD^\gamma r(x,y)|\leq1, \qquad
|\rD^\alpha r(x,y)|\leq\dfrac1{r(x,y)}.
$$
Inserting these in \eqref{eq:FDB} gives, for all $\alpha,\beta\in\Nnotsn_0^{2n}$ with $|\beta|=1$ and $|\alpha|=2$,
\begin{align}\label{eq:DF}
\begin{split}
\big|\rD^\beta\big(F(r(x,y))\big)\big|&\le\big|F'\big(r(x,y)\big)\big|,\\
\big|\rD^\alpha\big(F(r(x,y))\big)\big|&\le\big|F''\big(r(x,y)\big)\big|+\frac{\big|F'\big(r(x,y)\big)\big|}{r(x,y)}.
\end{split}
\end{align}
Now recall that $\Phi_t(x,y)=\tilde{\Phi}_t(r(x,y))$, where 
	\[
	\tilde\Phi_t(r) = \left\{
	\begin{array}{cc}
	\log r,&t=0,\\
	\dfrac{1}{r^{t}},&t>0.
	\end{array}
	\right.
	\]
Elementary computations show that
	\begin{align}
	\label{eq:Phit_Derivs}
	\tilde\Phi'_t(r) = \left\{
	\begin{array}{cc}
	\dfrac{1}{r},&t=0,\\
	-\dfrac{t}{r^{t+1}},&t>0,
	\end{array}
	\right.
	\qquad\text{and}\qquad
	\tilde\Phi''_t(r) = \left\{
	\begin{array}{cc}
	-\dfrac{1}{r^2},&t=0,\\
	\dfrac{t(t+1)}{r^{t+2}},&t>0.
	\end{array}
	\right.
	\end{align}
Inserting these results into \eqref{eq:DF} (with $F=\tilde{\Phi}_t$) gives the claimed result. 
\end{proof}

\subsection{Single integrals}%
\label{sec:singlesingular}
We consider first the evaluation of the single integral
\begin{align}
\label{eq:singint_2}
I_\Gamma[\Phi_t(\cdot, \eta)] = \int_\Gamma \Phi_t(x,\eta)\,\rd\red{\mu}(x),
\end{align}
for $\eta\in\Rnotsn^n$. 
When $\eta\not\in\Gamma$ the integral is regular, and combining Theorem \ref{th:MidLip1}(iii) and Lemma \ref{lem:W2bd} gives the following error estimate for the barycentre rule. 
\begin{prop}
\label{prop:PhitSingleRegular}
Let $\Gamma$ \red{and $\mu$} be as in Subsections \ref{sec:IFS} \red{and \ref{sec:scaling}}. 
Let $h>0$ and let $\UnionHull$ be as in \eqref{eq:LhHull}. 
Let $\eta\in\Rnotsn^n\setminus \UnionHull$. 
Then 
\begin{align*}
\label{}
\left|I_{\Gamma}[\Phi_t(\cdot,\eta)]
-Q^h_{\Gamma}[\Phi_t(\cdot,\eta)]\right| 
&\leq 
{\frac{na_th^2\red{\mu}(\Gamma)
}{2\dist(\eta,\UnionHull)^{t+2}}.}
\end{align*}
\end{prop}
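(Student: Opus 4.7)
The plan is to argue that the statement is essentially an immediate consequence of Theorem~\ref{th:MidLip1}(iii) applied to the function $f(x):=\Phi_t(x,\eta)$, together with the pointwise derivative bound from Lemma~\ref{lem:W2bd}, with the hypothesis $\eta\notin\UnionHull$ being precisely what is needed to ensure that $f$ is twice differentiable on a neighbourhood of $\UnionHull$ so that Theorem~\ref{th:MidLip1}(iii) applies.

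First I would observe that since $\eta\notin\UnionHull$ and $\UnionHull$ is a closed set, we have $\dist(\eta,\UnionHull)>0$, and the open set $\Omega:=\{x\in\R^n:|x-\eta|>\dist(\eta,\UnionHull)/2\}$ contains $\UnionHull$ and avoids $\eta$. On $\Omega$, the map $x\mapsto\Phi_t(x,\eta)$ is smooth (in particular twice differentiable), so Theorem~\ref{th:MidLip1}(iii) yields
\[
\left|I_\Gamma[\Phi_t(\cdot,\eta)] - Q^h_\Gamma[\Phi_t(\cdot,\eta)]\right|
\le \frac{nh^2}{2}\cH^d(\Gamma)\sup_{x\in\UnionHull}\max_{\substack{\alpha\in\N_0^n\\|\alpha|=2}}\left|D^\alpha \Phi_t(x,\eta)\right|.
\]

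Next I would bound the derivatives using Lemma~\ref{lem:W2bd}. Although Lemma~\ref{lem:W2bd} is stated for multi-indices $\alpha\in\N_0^{2n}$ allowing differentiation in either argument, the special case of differentiation only in $x$ (with $\eta$ fixed) is a fortiori covered, giving
\[
\left|D^\alpha \Phi_t(x,\eta)\right|\le \frac{a_t}{|x-\eta|^{t+2}}, \qquad x\in\UnionHull,\;|\alpha|=2.
\]
Since every $x\in\UnionHull$ satisfies $|x-\eta|\ge\dist(\eta,\UnionHull)$, taking the supremum gives
\[
\sup_{x\in\UnionHull}\max_{|\alpha|=2}\left|D^\alpha \Phi_t(x,\eta)\right|\le \frac{a_t}{\dist(\eta,\UnionHull)^{t+2}}.
\]
Substituting this bound into the previous display yields the stated estimate.

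There is no real obstacle: the argument is a direct chain of two stated results. The only point requiring any care is verifying that the regularity hypothesis of Theorem~\ref{th:MidLip1}(iii) is met, which follows from the positivity of $\dist(\eta,\UnionHull)$, and noting that Lemma~\ref{lem:W2bd} specialises correctly to derivatives in $x$ only.
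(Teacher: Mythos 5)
Your argument is exactly the paper's: Proposition~\ref{prop:PhitSingleRegular} is stated there as an immediate consequence of Theorem~\ref{th:MidLip1}(iii) combined with Lemma~\ref{lem:W2bd}, which is precisely the chain you carry out, including the (correct) observations that $\dist(\eta,\UnionHull)>0$ gives the required twice-differentiability on an open neighbourhood of $\UnionHull$ and that $|x-\eta|\ge\dist(\eta,\UnionHull)$ yields the stated constant. No gaps; nothing further needed.
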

When $\eta\in\Gamma$ the integral is singular, but convergent \red{for sufficiently small $t$. In the case $\mu=\cH^d|_\Gamma$ we have convergence for $0\leq t<d\leq n$, for any $\eta\in \Gamma$ (see Corollary \ref{cor:HausdorffIntegrability}). For a general invariant measure the situation is more complicated and the integrability threshold depends on $\eta$ (see the discussion in Section \ref{sec:IntegrabilityGeneral}).  
In the special case where $\eta$ is the fixed point of one of the contracting similarities $s_m$ defining $\Gamma$, we have convergence for $0\leq t<t_m$ (see Lemma \ref{lem:IntegrabilityFixedPoint}), where
\begin{align}
\label{eq:tmDef}
t_m:=\frac{\log p_m}{\log\rho_m}.
\end{align}
Furthermore, in this case the singular integral can be written in terms of regular integrals, as the following result shows. 
We remind the reader that if $\mu=\cH^d|_\Gamma$ then $p_m=\rho_m^d$ and $t_m=d$.}
\begin{thm}\label{th:Jsingthm}
Let $\Gamma$ \red{and $\mu$} be as in Subsections \ref{sec:IFS} \red{and \ref{sec:scaling}}. 
Fix $m\in\{1,\ldots,M\}$ and let $\eta_{m}$ denote the fixed point of the contracting similarity $s_{m}$, i.e.\ the unique point $\eta_{m}\in \Gamma$ such that $s_m(\eta_m)=\eta_m$. Suppose that $\eta_m\not\in \Gamma_{m'}$ for any $m'\in\{1,\ldots,M\}$, $m'\neq m$. (This holds, for instance if 
$\Gamma$ is disjoint in the sense of \eqref{eq:Rdef}.) 
Then \red{the singular integral \eqref{eq:singint_2} is finite for $0\leq t<t_m$, where $t_m$ is defined in \eqref{eq:tmDef}, and it} can be represented in terms of regular integrals, as:
\[
I_\Gamma[\Phi_t(\cdot, \eta_{m})]= 
\left\{
\begin{array}{ll}
\displaystyle(1-\red{p_m})^{-1}\Bigg(\red{p_m}\red{\mu}(\Gamma)\log\rho_{m} +\sum_{\substack{m'=1\\m'\neq{m}}}^M
I_{\Gamma_{m'}}[\Phi_t(\cdot,\eta_m)]\Bigg),&\quad t=0,\\
\displaystyle(1-\red{p_m}\rho^{-t}_{m})^{-1}\sum_{\substack{m'=1\\m'\neq{m}}}^M 
I_{\Gamma_{m'}}[\Phi_t(\cdot,\eta_m)],
&\quad 
t\in(0,\red{t_m}).
\end{array}
\right.
\]
\end{thm}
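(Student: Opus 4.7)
The plan is to split $\Gamma$ into its first-level self-similar sub-components and isolate the one contribution containing the singularity, then use the homogeneity of $\Phi_t$ to rewrite that singular contribution as a multiple of the original integral, turning the identity into a linear equation one can solve. The hypothesis that $\eta_m \notin \Gamma_{m'}$ for $m'\neq m$ ensures that every other sub-integral is regular, and convergence of $I_{\Gamma}[\Phi_t(\cdot,\eta_m)]$ itself is guaranteed by Lemma~\ref{lem:AntonioModified} since $t<d$.

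First I would write
\[
I_\Gamma[\Phi_t(\cdot,\eta_m)] \;=\; \sum_{m'=1}^M I_{\Gamma_{m'}}[\Phi_t(\cdot,\eta_m)]
\]
using $\Gamma=\bigcup_{m'=1}^M \Gamma_{m'}$ and the self-similarity property $\cH^d(\Gamma_{m'}\cap \Gamma_{m''})=0$ for $m'\neq m''$ stated in \S\ref{sec:IFS}. For $m'\neq m$ the term $I_{\Gamma_{m'}}[\Phi_t(\cdot,\eta_m)]$ is finite because $\eta_m\notin \Gamma_{m'}$ and $\Gamma_{m'}$ is compact, so $\dist(\eta_m,\Gamma_{m'})>0$ and $\Phi_t(\cdot,\eta_m)$ is continuous on $\Gamma_{m'}$.

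Next I would handle the singular term $I_{\Gamma_m}[\Phi_t(\cdot,\eta_m)]$ by applying the change-of-variables identity \eqref{eq:cov} with $\bm=(m)$:
\[
I_{\Gamma_m}[\Phi_t(\cdot,\eta_m)]
=\rho_m^d \int_\Gamma \Phi_t\big(s_m(x'),\eta_m\big)\,\rd\cH^d(x').
\]
Since $s_m(\eta_m)=\eta_m$ and $s_m$ is a similarity of ratio $\rho_m$, one has $|s_m(x')-\eta_m|=|s_m(x')-s_m(\eta_m)|=\rho_m|x'-\eta_m|$. The homogeneity relation \eqref{eq:PhitObvs} then yields, for $t>0$,
\[
\int_\Gamma \Phi_t\big(s_m(x'),\eta_m\big)\,\rd\cH^d(x')
=\rho_m^{-t}\,I_\Gamma[\Phi_t(\cdot,\eta_m)],
\]
and for $t=0$,
\[
\int_\Gamma \Phi_0\big(s_m(x'),\eta_m\big)\,\rd\cH^d(x')
=\cH^d(\Gamma)\log\rho_m + I_\Gamma[\Phi_0(\cdot,\eta_m)].
\]
Multiplying by $\rho_m^d$ gives $I_{\Gamma_m}[\Phi_t(\cdot,\eta_m)] = \rho_m^{d-t}\,I_\Gamma[\Phi_t(\cdot,\eta_m)]$ for $t>0$, and the analogous logarithmic identity for $t=0$.

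Finally I would substitute these expressions back into the decomposition of $I_\Gamma[\Phi_t(\cdot,\eta_m)]$ and solve the resulting scalar linear equation for $I_\Gamma[\Phi_t(\cdot,\eta_m)]$, noting that the coefficient $1-\rho_m^{d-t}$ (respectively $1-\rho_m^d$) is nonzero since $0<\rho_m<1$ and $t<d$. This yields the stated closed-form expressions. The only delicate point to justify carefully is the use of homogeneity inside the integral, which is legitimate because $\Phi_t(s_m(x'),\eta_m)$ is $\cH^d$-integrable on $\Gamma$ (the integral equals $\rho_m^{-d}I_{\Gamma_m}[\Phi_t(\cdot,\eta_m)]$, which is finite by Lemma~\ref{lem:AntonioModified}), so the resulting identities are between finite quantities and the algebraic rearrangement is valid.
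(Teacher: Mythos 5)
Your proposal is correct and follows essentially the same route as the paper's proof: split $I_\Gamma[\Phi_t(\cdot,\eta_m)]$ over the level-one sub-components, use \eqref{eq:cov} together with $s_m(\eta_m)=\eta_m$ and the similarity/homogeneity property \eqref{eq:PhitObvs} to express the singular term $I_{\Gamma_m}[\Phi_t(\cdot,\eta_m)]$ as $\rho_m^{d-t}I_\Gamma[\Phi_t(\cdot,\eta_m)]$ (with the extra $\rho_m^d\cH^d(\Gamma)\log\rho_m$ term when $t=0$), and solve the resulting linear equation. Your added remarks on finiteness of the singular and regular terms are a harmless elaboration of what the paper leaves implicit.
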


\begin{proof}
\red{The integrability result is proved in Lemma \ref{lem:IntegrabilityFixedPoint}. To prove the claimed decomposition,} we first split the integral, writing
\begin{equation}\label{eq:Jsing_comps}
I_{\Gamma}[\Phi_t(\cdot ,\eta_{m})]= \underbrace{I_{\Gamma_m}[\Phi_t(\cdot ,\eta_{m})]}_{\text{singular integral}}+ \sum_{\substack{m'=1\\m'\neq{m}}}^M 
\underbrace{I_{\Gamma_{m'}}[\Phi_t(\cdot ,\eta_{m})]}_{\text{regular integral}}.
\end{equation}
Focusing on the singular term, by \eqref{eq:cov_general} and the fact that $s_m(\eta_m)=\eta_m$ we can write
\begin{align*}
I_{\Gamma_m}[\Phi_t(\cdot ,\eta_{m})] 
&=\int_{\Gamma_{m}}\Phi_t(x,\eta_{m})\,\rd\red{\mu}(x)\\
&= \red{p_m}\int_{\Gamma}\Phi_t(s_m(x),\eta_{m})\,\rd\red{\mu}(x)\\
&= \red{p_m}\int_{\Gamma}\Phi_t(s_m(x),s_m(\eta_{m}))\,\rd\red{\mu}(x)\\
&= \red{p_m}\int_{\Gamma}\Phi_t(\rho_m A_mx+\delta_m,\rho_m A_m \eta_{m}+\delta_m)\,\rd\red{\mu}(x)\\
&= \red{p_m}\int_{\Gamma}\Phi_t(\rho_mx,\rho_m\eta_{m})\,\rd\red{\mu}(x),
\end{align*}
using the fact that $\Phi_t$ is translation and rotation invariant. 
Then, applying \eqref{eq:PhitObvs} gives 
\begin{align}
I_{\Gamma_m}[\Phi_t(\cdot ,\eta_{m})] =\left\{
\begin{array}{ll}
\red{p_m}\left(\red{\mu}(\Gamma)\log\rho_{m}+I_{\Gamma}[\Phi_t(\cdot ,\eta_{m})]\right),&t=0,\\
\red{p_m}\rho_{m}^{-t}I_{\Gamma}[\Phi_t(\cdot ,\eta_{m})] ,&t\in(0,\red{t_m}).
\end{array}\label{eq:singsing_rescale}
\right.
\end{align}
Substituting \eqref{eq:singsing_rescale} into \eqref{eq:Jsing_comps} and solving for $I_{\Gamma}[\Phi_t(\cdot ,\eta_{m})]$, we obtain the result.
\end{proof}

Theorem \ref{th:Jsingthm} can be combined with any quadrature rule capable of evaluating the regular integrals $I_{\Gamma_{m'}}[\Phi_t(\cdot ,\eta_{m})]$, $m'\neq m$, to produce a quadrature rule for evaluating the singular integral \eqref{eq:singint_2} when $\eta=\eta_m$. In particular, given $h>0$, 
applying the barycentre rule of Definition \ref{def:single} 
with $\Gamma$ replaced by $\Gamma_{m'}$ 
and $\cI=L_h(\Gamma_{m'})$, for each $m'\neq m$,  
produces the following quadrature rule:
\begin{align}
\label{eq:SingleSingQuad}
Q^h_{\Gamma,t,m}
:= 
\left\{
\begin{array}{ll}
\displaystyle(1-\red{p_m})^{-1}\Bigg(\red{p_m}\red{\mu}(\Gamma)\log(\rho_{m}) +\sum_{\substack{m'=1\\m'\neq{m}}}^M
Q^h_{\Gamma_{m'}}[\Phi_t(\cdot, \eta_{m})]
\Bigg),&t=0,\\
\displaystyle(1-\red{p_m}\rho^{-t}_{m})^{-1}\sum_{\substack{m'=1\\m'\neq{m}}}^M 
Q^h_{\Gamma_{m'}}[\Phi_t(\cdot, \eta_{m})],&\hspace{-.5cm}t\in(0,\red{t_m}).
\end{array}
\right.
\end{align}

This quadrature formula could be used for the implementation of a collocation-type discretisation of the integral equations in \cite{HausdorffBEM}, with the collocation nodes chosen as fixed points of the \red{self-similar subsets} of the IFS attractor \red{used as the BEM elements} in \cite{HausdorffBEM}.
This discretisation is not investigated in \cite{HausdorffBEM}.

\begin{cor}\label{cor:Jsingconvthm}
Let $\Gamma$, $m$, $\eta_m$ and $\red{t_m}$ be as in Theorem \ref{th:Jsingthm}. 
Let $0<h<\diam\GG$, and suppose that $\eta_m\not\in\Hull(\Gamma_{\bm'})$ for each $\bm'\in L_h(\Gamma)$ such that $m'_1\neq m$. Then the quadrature rule defined by \eqref{eq:SingleSingQuad} for the integral \eqref{eq:singint_2} with $\eta=\eta_m$ satisfies the error estimate
\[ 
\left|I_\Gamma[\Phi_t(\cdot, \eta_{m})] - 
Q^h_{\Gamma,t,m}
\right|\leq 
{\frac{na_th^2\red{\mu}(\Gamma)}{2(1-\red{p_m}\rho^{-t}_{m})R_{m,h}^{t+2}}, \qquad \red{0\leq t<t_m,}}
\]
where
\begin{align}
\label{eq:RmhDef}
R_{m,h}:=\min_{\substack{\bm'\in L_h(\Gamma)\\ m_1'\neq m}}\dist\big(\eta_m,\Hull(\Gamma_{\bm'})\big)
>0.
\end{align}
\end{cor}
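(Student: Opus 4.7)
The plan is to subtract the definition \eqref{eq:SingleSingQuad} of $Q^h_{\Gamma,t,m}$ from the exact representation of $I_\Gamma[\Phi_t(\cdot,\eta_m)]$ supplied by Theorem~\ref{th:Jsingthm}, and then to control the resulting error as a sum of regular‐integral errors to which Proposition~\ref{prop:PhitSingleRegular} applies. In the case $t=0$ the explicit term $\cH^d(\Gamma)\rho_m^d\log\rho_m$ appears identically in both expressions and therefore cancels; the common prefactor $(1-\rho_m^{d-t})^{-1}$ also matches (for $t=0$ via $d-t=d$). In both cases this immediately reduces the claim to bounding
\[
I_\Gamma[\Phi_t(\cdot,\eta_m)] - Q^h_{\Gamma,t,m}
= \frac{1}{1-\rho_m^{d-t}} \sum_{\substack{m'=1\\ m'\neq m}}^M \Bigl(I_{\Gamma_{m'}}[\Phi_t(\cdot,\eta_m)] - Q^h_{\Gamma_{m'}}[\Phi_t(\cdot,\eta_m)]\Bigr).
\]

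For each fixed $m'\neq m$ I would apply Proposition~\ref{prop:PhitSingleRegular} on the sub-attractor $\Gamma_{m'}$. This requires checking that $\eta_m\notin (\Gamma_{m'})_{\Hull,h}$. Since $\Gamma_{m'}$ is itself an IFS attractor with contractions $\{s_{m'}\circ s_k\circ s_{m'}^{-1}\}_{k=1}^M$, a routine unpacking of the definitions (together with \eqref{eq:sim_measure}) gives the identification $(\Gamma_{m'})_{\bm''}=\Gamma_{(m',\bm'')}$ and $\bm''\in L_h(\Gamma_{m'})\iff (m',\bm'')\in L_h(\Gamma)$. Hence
\[
(\Gamma_{m'})_{\Hull,h}=\bigcup_{\substack{\bm'\in L_h(\Gamma)\\ m_1'=m'}}\Hull(\Gamma_{\bm'}),
\]
so the hypothesis $\eta_m\notin\Hull(\Gamma_{\bm'})$ for all $\bm'\in L_h(\Gamma)$ with $m_1'\neq m$ is exactly what is needed. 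Proposition~\ref{prop:PhitSingleRegular} therefore delivers
\[
\left|I_{\Gamma_{m'}}[\Phi_t(\cdot,\eta_m)] - Q^h_{\Gamma_{m'}}[\Phi_t(\cdot,\eta_m)]\right|
\leq \frac{n a_t h^2 \cH^d(\Gamma_{m'})}{2\,\dist\bigl(\eta_m,(\Gamma_{m'})_{\Hull,h}\bigr)^{t+2}}
\leq \frac{n a_t h^2 \cH^d(\Gamma_{m'})}{2\,R_{m,h}^{t+2}},
\]
where the last inequality is the distance bound built into the definition \eqref{eq:RmhDef} of $R_{m,h}$.

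Finally, summing over $m'\neq m$, using $\sum_{m'\neq m}\cH^d(\Gamma_{m'})\leq \sum_{m'=1}^M\cH^d(\Gamma_{m'})=\cH^d(\Gamma)$ (by self-similarity; cf.~\eqref{eq:sim_measure}), and multiplying by $(1-\rho_m^{d-t})^{-1}$ yields the stated bound. The positivity $R_{m,h}>0$ follows because $L_h(\Gamma)$ is finite, each $\Hull(\Gamma_{\bm'})$ is closed, and by hypothesis $\eta_m$ lies outside each such hull. The only step that is not formally automatic is the bookkeeping identifying the $L_h$-decomposition of $\Gamma_{m'}$ with a subset of the $L_h$-decomposition of $\Gamma$; I expect this to be the main, though mild, obstacle, and it is precisely what makes the single distance parameter $R_{m,h}$ in \eqref{eq:RmhDef} correctly govern \emph{all} the sub-errors simultaneously.
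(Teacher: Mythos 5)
Your proposal is correct and follows essentially the same route as the paper's proof: subtract the exact representation of Theorem \ref{th:Jsingthm} from \eqref{eq:SingleSingQuad}, bound each regular sub-integral error via Proposition \ref{prop:PhitSingleRegular}, and sum the measures using \eqref{eq:index}. The only difference is that you make explicit the bookkeeping identification $(\Gamma_{m'})_{\Hull,h}=\bigcup_{\bm'\in L_h(\Gamma),\,m_1'=m'}\Hull(\Gamma_{\bm'})$ (so that $R_{m,h}$ bounds all the relevant distances), which the paper leaves implicit; your verification of it is correct.
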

\begin{proof}
For $\red{0\leq t<t_m}$ we have 
from Theorem \ref{th:Jsingthm} and equation \eqref{eq:SingleSingQuad} that
\begin{align*}
\label{}
&\left|I_\Gamma[\Phi_t(\cdot, \eta_{m})] - 
Q^h_{\Gamma,t,m}
\right|  \\
&\qquad\leq 
\displaystyle(1-\red{p_m}\rho^{-t}_{m})^{-1}
\sum_{\substack{m'=1\\m'\neq{m}}}^M 
\left|
I_{\Gamma_{m'}}[\Phi_t(\cdot,\eta_{m})]
-Q^h_{\Gamma_{m'}}[\Phi_t(\cdot,\eta_{m})]
\right|,
\end{align*}
and the result follows by applying Proposition \ref{prop:PhitSingleRegular} to each term in the sum, 
and recalling 
\eqref{eq:index} \red{to see that 
$\sum_{\substack{m'=1\\m'\neq{m}}}^M 
\mu(\Gamma_{m'})\leq \mu(\Gamma)$.
} 
\end{proof}

The separation parameter $R_{m,h}$ introduced in \eqref{eq:RmhDef} is used only in the statement of Corollary~\ref{cor:Jsingconvthm}, and is compared to other related parameters in Remark~\ref{rem:Rvalues}.
The relative error and the behaviour of the bound in the limit $(d-t)\to0$ are analysed for the case of \red{homogeneous} IFSs \red{with $\mu=\cH^d|_\Gamma$} in Subsection \ref{s:ErrVsCostSing}. %

\subsection{Double integrals}\label{sec:nonoverlap}

We now consider the evaluation of the double integral
\begin{align}
I_{\Gamma,\Gamma'}[\Phi_t]=\int_{\Gamma}\int_{\Gamma'}\Phi_t(x,y)\,\rd\red{\mu'}(y)\,\rd\red{\mu}(x).
\label{eq:double_int}
\end{align}
When $\Gamma$ and $\Gamma'$ are disjoint the integral is regular, and combining Theorem \ref{th:MidLip2}(iii) and Lemma \ref{lem:W2bd} gives the following error estimate for the barycentre rule. 
\begin{prop}
\label{prop:PhitDoubleRegular}
\red{Let $\Gamma,\Gamma'\subset\Rnotsn^{n}$ and $\mu,\mu'$ be as in Subsections \ref{sec:IFS} and \ref{sec:scaling}.} 
Let $h>0$ and let $\UnionHull$ be as in \eqref{eq:LhHull} and $\UnionHull'$ be as in \eqref{eq:LhHull} with $\Gamma$ replaced by $\Gamma'$. Suppose that $\UnionHull\cap\UnionHull'=\emptyset$. Then
\begin{align*}
\label{}
\left|I_{\Gamma,\Gamma'}[\Phi_t]
-Q^h_{\Gamma,\Gamma'}[\Phi_t]\right| 
&\leq \frac{2na_th^2\red{\mu}(\Gamma)\red{\mu'}(\Gamma')}{\dist(\UnionHull,\UnionHull')^{t+2}}.
\end{align*}
\end{prop}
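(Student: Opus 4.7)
The plan is to combine the second-order error estimate from Theorem~\ref{th:MidLip2}\ref{pt:2D} (applied to $f=\Phi_t$) with the pointwise derivative bound from Lemma~\ref{lem:W2bd}. Since $\Gamma,\Gamma'\subset\R^n$ here (so $n'=n$), Theorem~\ref{th:MidLip2}\ref{pt:2D} specialised to $f=\Phi_t$ yields, in principle,
\[
|I_{\Gamma,\Gamma'}[\Phi_t]-Q^h_{\Gamma,\Gamma'}[\Phi_t]|
\le 2n\,h^2\,\cH^d(\Gamma)\,\cH^{d'}(\Gamma')\sup_{(x,y)\in\UnionHull\times\UnionHull'}\max_{\substack{\alpha\in\N_0^{2n}\\|\alpha|=2}}|D^\alpha\Phi_t(x,y)|,
\]
provided $\Phi_t$ is twice differentiable on some open neighbourhood of $\UnionHull\times\UnionHull'$. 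This regularity hypothesis needs to be checked, and it is precisely the role of the disjointness assumption $\UnionHull\cap\UnionHull'=\emptyset$: since both sets are compact (as finite unions of compact convex hulls), the distance $\dist(\UnionHull,\UnionHull')$ is strictly positive, so a small open thickening stays off the singular diagonal of $\Phi_t$.

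Next, I would invoke Lemma~\ref{lem:W2bd}, which furnishes the pointwise bound
\[
|D^\alpha\Phi_t(x,y)|\le\frac{a_t}{|x-y|^{t+2}}\qquad\text{for all }x\ne y,\ |\alpha|=2.
\]
For $(x,y)\in\UnionHull\times\UnionHull'$ we have $|x-y|\ge\dist(\UnionHull,\UnionHull')$, so the supremum above is controlled by $a_t/\dist(\UnionHull,\UnionHull')^{t+2}$. Substituting this back into the estimate from Theorem~\ref{th:MidLip2}\ref{pt:2D} immediately produces
\[
|I_{\Gamma,\Gamma'}[\Phi_t]-Q^h_{\Gamma,\Gamma'}[\Phi_t]|
\le\frac{2n a_t h^2\,\cH^d(\Gamma)\,\cH^{d'}(\Gamma')}{\dist(\UnionHull,\UnionHull')^{t+2}},
\]
which is the claimed bound.

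There is essentially no technical obstacle: the work has already been done in Theorem~\ref{th:MidLip2} and Lemma~\ref{lem:W2bd}, and the proof is a direct composition of these two ingredients. The only point requiring a moment's attention is the verification that $\Phi_t$ is twice differentiable in an \emph{open} set containing $\UnionHull\times\UnionHull'$ (as required by Theorem~\ref{th:MidLip2}\ref{pt:2D}), which follows from compactness and positivity of $\dist(\UnionHull,\UnionHull')$ together with the smoothness of $\Phi_t$ off the diagonal.
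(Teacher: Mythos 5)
Your proposal is correct and is exactly the paper's argument: the paper obtains this proposition precisely by combining Theorem~\ref{th:MidLip2}(iii) (with $n'=n$, giving the factor $2n$) with the bound $|D^\alpha\Phi_t(x,y)|\le a_t/|x-y|^{t+2}$ from Lemma~\ref{lem:W2bd} and the observation $|x-y|\ge\dist(\UnionHull,\UnionHull')$ on $\UnionHull\times\UnionHull'$. Your extra remark that the positive distance between the compact sets $\UnionHull$ and $\UnionHull'$ guarantees twice differentiability of $\Phi_t$ on an open neighbourhood is a correct and sensible verification of the hypothesis of Theorem~\ref{th:MidLip2}(iii).
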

When $\Gamma$ and $\Gamma'$ are not disjoint \eqref{eq:double_int} is a singular integral, \red{which converges only for sufficiently small $t$. Suppose for simplicity that $\Gamma'=\Gamma$. Then in the case $\mu=\mu'=\cH^d|_\Gamma$ we have convergence for $0\leq t<d\leq n$ (see Corollary \ref{cor:HausdorffIntegrability}). 
For a more general pair of invariant measures $\mu$ and $\mu'$ on $\Gamma$, with respective (possibly different) weights/probabilities $(p_1,\ldots,p_M)$ and $(p_1',\ldots,p_M')$, if $\Gamma$ is disjoint then the integral converges for $0\leq t<t_*$ (see Lemma \ref{lem:IntegrabilityDouble}), where $t_{*}$ is the unique positive solution of 
\begin{align}
\label{eq:tstardef}
\sum_{m=1}^M p_mp_m' \rho_m^{-t_*}=1.
\end{align}
In the disjoint case the singular integral (when it converges) can be written purely in terms of regular integrals, as the following result shows. This was noted previously for the case of Cantor sets in e.g.~\cite{bessis1987mellin}. 
We remind the reader that if $\mu=\mu'=\cH^d|_\Gamma$ then $p_m=p_m'=\rho_m^d$ and $t_*=d$.}

\begin{thm}\label{th:sing_reform}
Let $\Gamma\subset\Rnotsn^n$ be as in Subsection \ref{sec:IFS}, 
\red{and let $\mu,\mu'$ be as in Subsection \ref{sec:scaling}. }
Suppose that $\Gamma$ is disjoint in the sense of \eqref{eq:Rdef}. 
\red{Then the singular double integral \eqref{eq:double_int} with $\Gamma'=\Gamma$ converges for $0\leq t<t_*$, where $t_*$ is the unique positive solution of \eqref{eq:tstardef}, and it can be represented in terms of regular integrals, as: }
\begin{align*}
&I_{\Gamma,\Gamma}[\Phi_t]\\
&=\left\{
\begin{array}{ll}
\displaystyle \bigg(1-\sum_{m=1}^M\red{p_mp_m'}\bigg)^{-1}
\sum_{m=1}^M\Bigg(\red{p_mp_m'}
\red{\mu}(\Gamma)\red{\mu'}(\Gamma)
\log(\rho_m) + \sum_{\substack{m'=1\\ m'\neq m}}
I_{\Gamma_m,\Gamma_{m'}}[\Phi_t]
\Bigg),\hfill& t=0,\\
\displaystyle\bigg(1-\sum_{m=1}^M\red{p_mp_m'}\rho^{-t}_m\bigg)^{-1}\sum_{m=1}^M\sum_{\substack{m'=1\\ m'\neq m}}^M
I_{\Gamma_m,\Gamma_{m'}}[\Phi_t],
\quad&\hspace{-.5cm} t\in(0,t_*).
\end{array}
\right.
\end{align*}
\end{thm}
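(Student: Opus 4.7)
The plan is to mimic the single-integral argument in Theorem~\ref{th:Jsingthm}, now applied simultaneously in both variables. The key idea is that the only singular contributions to $I_{\Gamma,\Gamma}[\Phi_t]$ come from the ``diagonal'' blocks $\Gamma_m\times\Gamma_m$, each of which is a rescaled copy of the original double integral; isolating them and applying the homogeneity \eqref{eq:PhitObvs} yields a linear equation that we can solve for $I_{\Gamma,\Gamma}[\Phi_t]$.

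First I would split the integration domain. Since $\Gamma=\bigcup_{m=1}^M \Gamma_m$ with $\cH^d(\Gamma_m\cap\Gamma_{m'})=0$ for $m\ne m'$ (self-similarity), the product set $\Gamma\times\Gamma$ decomposes, up to a $(\cH^d\otimes\cH^d)$-null set, as the disjoint union of the $M^2$ blocks $\Gamma_m\times\Gamma_{m'}$. Since $I_{\Gamma,\Gamma}[\Phi_t]$ is absolutely convergent for $0\le t<d$ by Lemma~\ref{lem:AntonioModified}, we may write
\[
I_{\Gamma,\Gamma}[\Phi_t]=\sum_{m=1}^M I_{\Gamma_m,\Gamma_m}[\Phi_t]+\sum_{m=1}^M\sum_{\substack{m'=1\\m'\neq m}}^M I_{\Gamma_m,\Gamma_{m'}}[\Phi_t],
\]
where the second (off-diagonal) double sum consists of regular integrals, because the disjointness assumption \eqref{eq:Rdef} gives $\dist(\Gamma_m,\Gamma_{m'})\ge R_\Gamma>0$.

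Next I would rescale each diagonal block. Applying the change-of-variables identity \eqref{eq:cov} separately in each variable gives
\[
I_{\Gamma_m,\Gamma_m}[\Phi_t]=\rho_m^{2d}\int_\Gamma\int_\Gamma \Phi_t\!\big(s_m(x),s_m(y)\big)\,\rd\cH^d(y)\,\rd\cH^d(x).
\]
Because $s_m$ is a contracting similarity with ratio $\rho_m$, we have $|s_m(x)-s_m(y)|=\rho_m|x-y|$, so the translation- and rotation-invariance of $\Phi_t$ together with \eqref{eq:PhitObvs} yields
\[
\Phi_t\!\big(s_m(x),s_m(y)\big)=
\begin{cases}
\log\rho_m+\Phi_t(x,y),& t=0,\\
\rho_m^{-t}\Phi_t(x,y),& t\in(0,d),
\end{cases}
\]
and hence
\[
I_{\Gamma_m,\Gamma_m}[\Phi_t]=
\begin{cases}
\rho_m^{2d}\bigl((\cH^d(\Gamma))^2\log\rho_m+I_{\Gamma,\Gamma}[\Phi_t]\bigr),& t=0,\\
\rho_m^{2d-t}\,I_{\Gamma,\Gamma}[\Phi_t],& t\in(0,d).
\end{cases}
\]

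Substituting this back into the decomposition and collecting the terms containing $I_{\Gamma,\Gamma}[\Phi_t]$ on the left-hand side produces the claimed formula. The invertibility of the resulting scalar factor is easy: since $0<\rho_m<1$ and, by \eqref{eq:dfirst}, $\sum_m \rho_m^d=1$, we have $\rho_m^{2d}<\rho_m^d$ and $\rho_m^{2d-t}<\rho_m^d$ (using $2d-t>d$ when $t<d$), so both $\sum_m\rho_m^{2d}$ and $\sum_m\rho_m^{2d-t}$ are strictly less than $1$. I do not anticipate a major obstacle here: the only subtlety is the manipulation of an absolutely divergent-looking sum of singular integrals, but Lemma~\ref{lem:AntonioModified} guarantees absolute convergence throughout, so both the splitting and the rearrangement are purely algebraic once the change of variables and homogeneity have been applied.
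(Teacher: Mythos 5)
Your proposal is correct and follows essentially the same route as the paper's proof: the same decomposition of $\Gamma\times\Gamma$ into the $M$ singular diagonal blocks and the regular off-diagonal blocks, the same rescaling of each diagonal block via \eqref{eq:cov}, translation/rotation invariance and the homogeneity \eqref{eq:PhitObvs}, and the same algebraic solve for $I_{\Gamma,\Gamma}[\Phi_t]$. The only additions are your explicit remarks on absolute convergence and on the strict inequality $\sum_m\rho_m^{2d-t}<1$, which the paper leaves implicit; these are correct and harmless.
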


\begin{proof}
\red{The integrability result is proved in Lemma \ref{lem:IntegrabilityDouble}. To prove the claimed decomposition,} as in the single integral case we begin by splitting the integral, writing
\begin{align}
I_{\Gamma,\Gamma}[\Phi_t]
=\sum_{m=1}^M
\underbrace{I_{\Gamma_m,\Gamma_m}[\Phi_t]
}_{\text{singular integral}}
+
\sum_{m=1}^M\sum_{\substack{m'=1\\m'\neq m}}^M \underbrace{I_{\Gamma_m,\Gamma_{m'}}[\Phi_t]}_{\text{regular integral}}
\label{eq:sing_smooth_rep}.
\end{align}
By applying \eqref{eq:cov_general} \red{(for both $\mu$ and $\mu'$)}, \eqref{eq:PhitObvs}, and the fact that $\Phi_t$ is translation and rotation invariant, the singular integrals in \eqref{eq:sing_smooth_rep} can be written as
\begin{align*}
I_{\Gamma_m,\Gamma_m}[\Phi_t]
&=\int_{{\Gamma_m}}\int_{{\Gamma_m}}\Phi_t\left(x,y\right)\rd\red{\mu'}(y)\rd\red{\mu}(x)
\\
&={\red{p_mp_m'}}\int_{\Gamma}\int_{\Gamma}\Phi_t\left(s_m(x),s_m(y)\right)\rd\red{\mu'}(y)\rd\red{\mu}(x)\\
&={\red{p_mp_m'}}\int_{\Gamma}\int_{\Gamma}\Phi_t\left(\rho_mx,\rho_my\right)\rd\red{\mu'}(y)\rd\red{\mu}(x)\\
&=
\begin{cases}
\displaystyle
{\red{p_mp_m'}}\left(\red{\mu}(\Gamma)\red{\mu'}(\Gamma)\log(\rho_m)+I_{\Gamma,\Gamma}[\Phi_t]\right),&\quad t=0,\\
{\red{p_mp_m'}\rho_m^{-t}}I_{\Gamma,\Gamma}[\Phi_t],&\quad t\in(0,\red{t_*}).
\end{cases}
\end{align*}
Substituting this expression into \eqref{eq:sing_smooth_rep} and solving for $I_{\Gamma,\Gamma}[\Phi_t]$ gives the claimed result. 
\end{proof}

By combining Theorem \ref{th:sing_reform} with a suitable quadrature rule for evaluating the regular integrals $I_{\Gamma_m,\Gamma_{m'}}[\Phi_t]$, $m'\neq m$, we can obtain a quadrature rule for evaluating the singular integral \eqref{eq:double_int}. In particular, given $h>0$, applying the barycentre rule of Definition \ref{def:double} 
with $\Gamma,\Gamma'$ replaced by $\Gamma_m,\Gamma_{m'}$ and with $\cI=L_h(\Gamma_m)$ and $\cI'=L_h(\Gamma_{m'})$ for each pair $(m,m')$ such that $m'\neq m$ produces the following quadrature rule:
\begin{align}
&Q^h_{\Gamma,\Gamma,t}:=
\nonumber\\&
\left\{
\begin{array}{ll}
 &\displaystyle\bigg(1-\sum_{m=1}^M\red{p_mp_m'}\bigg)^{-1}\sum_{m=1}^M
\Bigg(\red{p_mp_m'}\red{\mu}(\Gamma)\red{\mu'}(\Gamma)\log(\rho_m) + \sum_{\substack{m'=1\\ m'\neq m}}
Q^h_{\Gamma_m,\Gamma_{m'}}[\Phi_t]
\Bigg),\\
& \hfill t=0,\\
\displaystyle&\displaystyle\bigg(1-\sum_{m=1}^M\red{p_mp_m'}\rho^{-t}_m\bigg)^{-1}\sum_{m=1}^M\sum_{\substack{m'=1\\ m'\neq m}}^M
Q^h_{\Gamma_m,\Gamma_{m'}}[\Phi_t],\hfill t\in(0,\red{t_*}).
\end{array}
\right.\label{eq:non_overlap_quad_gen}
\end{align}

\begin{cor}\label{cor:Phi_t}
Let $\Gamma$, \red{$\mu$, $\mu'$} and $t$ be as in Theorem \ref{th:sing_reform}. 
Let $0<h<\diam\GG$, and assume that $\Hull(\Gamma_{\bm})\cap\Hull(\Gamma_{\bm'})=\emptyset$ for all $\bm,\bm'\in L_h(\Gamma)$ such that $m_1\neq m_1'$.
Then the quadrature rule defined by \eqref{eq:non_overlap_quad_gen} for the integral \eqref{eq:double_int} satisfies the error estimate 
\[ 
\left|I_{\Gamma,\Gamma}[\Phi_t] - Q^h_{\Gamma,\Gamma,t}\right| \leq 
{\dfrac{2na_t h^2\red{\mu}(\Gamma)\red{\mu'}(\Gamma)}{\displaystyle\bigg(1-\sum_{m=1}^M\red{p_mp_m'}\rho^{-t}_m\bigg)R_{\Gamma,\Hull,h}^{t+2}},}
\]
where 
\begin{align}
\label{eq:RGammahDef}
R_{\Gamma,\Hull,h}:=\min_{\substack{\bm,\bm'\in L_h(\Gamma)\\m_1\neq m_1'}}
\dist\big(\Hull(\Gamma_{\bm}),\Hull(\Gamma_{\bm'})\big) \geq R_{\Gamma,\Hull}.
\end{align}
\end{cor}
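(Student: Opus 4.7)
The plan mirrors that of Corollary~\ref{cor:Jsingconvthm}: use Theorem~\ref{th:sing_reform} to express $I_{\Gamma,\Gamma}[\Phi_t]$ in terms of off-diagonal regular integrals, note that $Q^h_{\Gamma,\Gamma,t}$ in \eqref{eq:non_overlap_quad_gen} shares exactly the same algebraic scaffolding with the regular integrals replaced by their barycentre approximants, and invoke Proposition~\ref{prop:PhitDoubleRegular} to bound each pairwise error.

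Subtracting the two representations, in both the $t=0$ and $t\in(0,d)$ cases the prefactor $(1-\sum_{m=1}^M\rho_m^{2d-t})^{-1}$ and, when $t=0$, the diagonal $(\cH^d(\Gamma))^2\rho_m^{2d}\log\rho_m$ contributions appear identically on both sides and cancel. This leaves
\begin{equation*}
\left|I_{\Gamma,\Gamma}[\Phi_t] - Q^h_{\Gamma,\Gamma,t}\right|
\leq \bigg(1-\sum_{m=1}^M\rho_m^{2d-t}\bigg)^{-1}\sum_{m=1}^M\sum_{\substack{m'=1\\m'\neq m}}^M
\left|I_{\Gamma_m,\Gamma_{m'}}[\Phi_t]-Q^h_{\Gamma_m,\Gamma_{m'}}[\Phi_t]\right|.
\end{equation*}

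For each pair $m\neq m'$ I would apply Proposition~\ref{prop:PhitDoubleRegular} to the regular integral on $\Gamma_m\times\Gamma_{m'}$. The quadrature $Q^h_{\Gamma_m,\Gamma_{m'}}[\Phi_t]$ uses nodes in the unions of hulls $(\Gamma_m)_{\Hull,h}$ and $(\Gamma_{m'})_{\Hull,h}$, and via the natural identification $\bn\leftrightarrow(m,\bn)$ between $L_h(\Gamma_m)$ and $\{\bm\in L_h(\Gamma):m_1=m\}$ (the sub-components coincide because $\tilde s_k=s_m\circ s_k\circ s_m^{-1}$ preserves the contraction ratios), the stated hypothesis that $\Hull(\Gamma_{\bm})\cap\Hull(\Gamma_{\bm'})=\emptyset$ whenever $m_1\neq m_1'$ translates to the disjointness $(\Gamma_m)_{\Hull,h}\cap(\Gamma_{m'})_{\Hull,h}=\emptyset$, with distance bounded below by $R_{\Gamma,\Hull,h}$ per \eqref{eq:RGammahDef}. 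Proposition~\ref{prop:PhitDoubleRegular} then delivers the pairwise bound $2na_t h^2\cH^d(\Gamma_m)\cH^d(\Gamma_{m'})/R_{\Gamma,\Hull,h}^{t+2}$; summing using $\sum_{m,m'=1}^M\cH^d(\Gamma_m)\cH^d(\Gamma_{m'})=(\cH^d(\Gamma))^2$ (from \eqref{eq:index}) and dropping the diagonal yields the claimed estimate.

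The final inequality $R_{\Gamma,\Hull,h}\geq R_{\Gamma,\Hull}$ follows from the inclusions $\Hull(\Gamma_{\bm})\subseteq\Hull(\Gamma_{m_1})$ and $\Hull(\Gamma_{\bm'})\subseteq\Hull(\Gamma_{m_1'})$ (each $\Hull(\Gamma_{\bm})$ is obtained from $\Hull(\Gamma_{m_1})$ by further contractions) combined with monotonicity of $\dist$ under set inclusion, after which the minimum over finer indices is at least the minimum over the single-index pairs defining $R_{\Gamma,\Hull}$. The only nontrivial step, which is bookkeeping rather than conceptual, is matching the hypothesis on first-index-distinct $L_h(\Gamma)$-hulls with the hypothesis required by Proposition~\ref{prop:PhitDoubleRegular} when its $(\Gamma,\Gamma')$ is instantiated as $(\Gamma_m,\Gamma_{m'})$.
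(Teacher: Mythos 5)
Your proposal is correct and follows essentially the same route as the paper: subtract the representation of Theorem~\ref{th:sing_reform} from \eqref{eq:non_overlap_quad_gen} (common prefactor and, for $t=0$, the logarithmic terms cancel), then apply Proposition~\ref{prop:PhitDoubleRegular} to each off-diagonal pair $(\Gamma_m,\Gamma_{m'})$ and sum using $\sum_m\cH^d(\Gamma_m)=\cH^d(\Gamma)$. The extra bookkeeping you supply --- identifying $L_h(\Gamma_m)$ with $\{\bm\in L_h(\Gamma):m_1=m\}$, translating the hull-disjointness hypothesis into the separation $\dist\big(\UnionHullbm[],\,\cdot\,\big)\ge R_{\Gamma,\Hull,h}$, and checking $R_{\Gamma,\Hull,h}\ge R_{\Gamma,\Hull}$ --- is exactly what the paper leaves implicit, so no further changes are needed.
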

\begin{proof}
The proof is similar to that of Corollary \ref{cor:Jsingconvthm}. 
For $0\leq t<\red{t_*}$ we have
\[ 
\left|I_{\Gamma,\Gamma}[\Phi_t] - 
Q^h_{\Gamma,\Gamma,t}
\right|  = 
\bigg(1-\sum_{m=1}^M\red{p_mp_m'}\rho^{-t}_m\bigg)^{-1}\sum_{m=1}^M\\
\sum_{\substack{m'=1\\m'\neq{m}}}^M 
\left|I_{\Gamma_m,\Gamma_{m'}}[\Phi_t] - Q^h_{\Gamma_m,\Gamma_{m'}}[\Phi_t]\right|,
\]
and the result follows by applying Proposition \ref{prop:PhitDoubleRegular} to each term in the sum.
\end{proof}

\begin{rem}\label{rem:Rvalues}
So far we have introduced four different parameters quantifying the distance between \red{self-similar subsets} of an IFS attractor $\Gamma$:
\begin{itemize}
\item $R_\Gamma$ in \eqref{eq:Rdef}, which measures the minimum distance between level-1 \red{subsets},
\item $R_{\Gamma,\Hull}$ in \eqref{eq:RHulldef}, which measures the minimum distance between the convex hulls of level-1 \red{subsets},
\item $R_{m,h}$ in 
\eqref{eq:RmhDef}, 
which measures the minimum distance between a fixed point $\eta_m$ of $s_m$ and the convex hulls of \red{subsets} of $\Gamma\setminus \Gamma_m$ of diameter approximately $h$,
\item $R_{\Gamma,\Hull,h}$ in \eqref{eq:RGammahDef}, 
which measures the minimum distance between the convex hulls of pairs of \red{subsets}, taken from different level-1 \red{subset}s, of approximate diameter $h$.
\end{itemize}
Recall that $R_{m,h}$ and $R_{\Gamma,\Hull,h}$ are defined only for $0<h<\diam\GG$.
They satisfy the inequalities
$$0\le R_{\Gamma,\Hull}\le R_{\Gamma,\Hull,h}\le R_\Gamma,\qquad
R_{\Gamma,\Hull,h}\le\min_{m=1,\ldots,M} R_{m,h}.$$
In particular, {Corollaries~\ref{cor:Jsingconvthm} and \ref{cor:Phi_t} show how} $R_{m,h}$ and $R_{\Gamma,\Hull,h}$ quantify the expected deterioration of the quadrature accuracy due to the vicinity of the integrand singularity, for single and double integrals 
($I_\Gamma[\Phi_t(\cdot,\eta_m)]$ and $I_{\Gamma,\Gamma}[\Phi_t]$), respectively. 

Table \ref{t:Rvalues} shows the values of these parameters for the four examples in Figure~\ref{fig:disjointness}.
The values of $R_{m,h}$ and $R_{\Gamma,\Hull,h}$ are valid for all sufficiently small $h$ (e.g.\ $R_{5,h}=0$ for (III) and $h\ge\sqrt2/3$). 

We are not aware of any IFS attractor satisfying the open set condition with $R_{m,h}=0$ for some $m$, i.e.\ with $\eta_m\in\Hull(\Gamma_{\bm'})$ for $\bm'\in L_h\GG$ and $m_1'\ne m$.
\end{rem}

\begin{table}[htb]
\centering
\begin{tabular}{|c|l|l|l|l|}\hline
&$R_\Gamma$&$R_{\Gamma,\Hull}$&$\min_{m=1,\ldots,M}R_{m,h}$&$R_{\Gamma,\Hull,h}$\\\hline
(I)&$1-2\rho$&$1-2\rho$&$1-\rho$\hfill($m=1,2,3,4$)&$1-2\rho$\\
(II)&$\ge\frac{1-\rho-3\rho^2}{2\sqrt3}$ 
&$0$&
$\sqrt{\frac13-\rho+\rho^2-\rho^3+\rho^4}$ \;($m=4$)
&$\ge\frac{1-\rho-3\rho^2}{2\sqrt3}$ 
\\
(III)&$\sqrt2/27$&$0$&$5\sqrt2/117$\hfill($m=5$)&$\sqrt2/27$
\\
(IV)&$0$&$0$&$1/(3\sqrt2)$\hfill($m=5$)&$0$\\\hline
\end{tabular}
\caption{The values of the separation parameters described in Remark~\ref{rem:Rvalues} for the IFS attractors of Figure~\ref{fig:disjointness}, for sufficiently small $h$.}
\label{t:Rvalues}
\end{table}

\subsection{Relative errors and dependence on \texorpdfstring{$N$}N for \red{homogeneous} IFSs \red{and Hausdorff measure}}\label{s:ErrVsCostSing}

In this section we show how the error bounds we derived for the singular integrals in Subsection \ref{sec:singlesingular} and Subsection \ref{sec:nonoverlap} can be written in terms of the quantity $N:=|L_h(\Gamma)|$, as was discussed for the regular integrals of Subsection \ref{sec:Errors} in Remark~\ref{rem:ErrVsCost}. As well as allowing us to determine the dependence of our error bounds on the computational cost of the quadrature rules, 
this also allows us to clarify the limiting behaviour of our bounds as $(d-t)\to 0$.  
To this purpose, we now consider relative errors, and restrict our attention in this section to \red{homogeneous} IFSs \red{and the case $\mu=\mu'=\cH^d|_\Gamma$}.

For a \red{homogeneous} IFS, we have $N=|I_\ell|=M^\ell$ for $\ell$ as in \eqref{eq:rho_ell}.
The number of evaluations of the integrand $\Phi_t$ required for the computation of the quadrature formulas is
$(M-1)M^{\ell-1}=\frac{M-1}M N$ for the single-integral formula \eqref{eq:SingleSingQuad} and
$(M-1)M^{2\ell-1}=\frac{M-1}M N^2$ for the double-integral formula \eqref{eq:non_overlap_quad_gen}.

Let us consider first the case $t>0$. From \eqref{eq:AntonioModified} we have the lower bounds:%
$$
\begin{aligned}
&I_\Gamma[\Phi_t(\cdot,\eta)]\ge\frac{\tilde{c}_1d}{d-t}\big(\diam\GG\big)^{d-t},\\
&I_{\Gamma,\Gamma}[\Phi_t]\ge \frac{\tilde{c}_1d}{d-t}\big(\diam\GG\big)^{d-t}\cH^d\GG,
\end{aligned} \qquad \eta\in\Gamma, \quad 0<t<d.
$$
Recall that $\tilde{c}_1>0$, defined in \eqref{eq:dset2}, is an intrinsic parameter of the $d$-set $\Gamma$, independent of its characterization as an IFS attractor.
Then, using that \red{for a homogeneous IFS and the case $\mu=\mu'=\cH^d|_\Gamma$ we have} $\red{\sum_{m=1}^Mp_m^2\rho_m^{-t}=}\sum_{m=1}^M\rho^{2d-t}=%
\rho^{d-t}$ from $M\rho^d=1$ in \eqref{eq:d2}, Corollaries \ref{cor:Jsingconvthm} and \ref{cor:Phi_t} imply the following relative error estimates:
\begin{align}\label{eq:RelErr}
\frac{\left|I_\Gamma[\Phi_t(\cdot, \eta_{m})] - Q^h_{\Gamma,t,m}\right|}
{\big|I_\Gamma[\Phi_t(\cdot,\eta_m)]\big|}
\le \frac{\cE_t}{R_{m,h}^{t+2}},  %
\qquad\quad
\frac{\left|I_{\Gamma,\Gamma}[\Phi_t] - Q^h_{\Gamma,\Gamma,t}\right|}{\big|I_{\Gamma,\Gamma}[\Phi_t]\big|}
\le \frac{4\,\cE_t}{R_{\Gamma,\Hull,h}^{t+2}},
\end{align}
where
$$
\cE_t:=\frac{na_th^2\cH^d(\Gamma)(d-t)}{2\tilde{c}_1d\big(\diam\GG\big)^{d-t}(1-\rho^{d-t})}.
$$
To bound $\cE_t$ we first note that, for $0<z,\rho<1$, we have $\rho^z\le 1-z(1-\rho)$ (by comparison of an affine and a convex function of $z$ that coincide for $z=0$ and $z=1$), 
so that
\begin{align}
\label{eq:bound1}
\frac{d-t}{1-\rho^{d-t}}\le\frac{d-t}{1-1+(d-t)(1-\rho)}=\frac1{1-\rho}. 
\end{align}
Moreover, from $|\log\rho|=\log\frac1\rho\le\frac1\rho-1=\frac{1-\rho}\rho$ we have 
$|\log\rho|(\frac1{1-\rho}-1)=|\log\rho|(\frac\rho{1-\rho})\le1$ and hence
\begin{align}
\label{eq:bound2}
\frac{|\log\rho|}{1-\rho}\le 1+|\log\rho|.
\end{align}
Then, using also $d=\frac{\log M}{|\log\rho|}$, $\rho=M^{-1/d}$ and $h\le\frac{\diam\GG}{\rho\, N^{1/d}}$ (see \eqref{eq:HBound}),
we can bound %
\begin{align}
\cE_t&\overset{\eqref{eq:bound1}}\le\frac{na_t\,h^2\, \cH^d(\Gamma)|\log\rho|}{2\tilde{c}_1\log M\big(\diam\GG\big)^{d-t}(1-\rho)} \notag\\
&\overset{\eqref{eq:bound2}}\le\frac{na_t\, h^2\, \cH^d(\Gamma)(1+|\log\rho|)}{2\tilde{c}_1\log M\big(\diam\GG\big)^{d-t}}  \notag\\
&\le\frac{na_t\cH^d(\Gamma)\big(\diam\GG\big)^{2-d+t}\; (1+|\log\rho|)}{2\tilde{c}_1(\log M)\rho^2}  N^{-2/d}\notag\\
&=\frac{na_t\cH^d(\Gamma)\big(\diam\GG\big)^{2-d+t}}{2\tilde{c}_1\log M} \;\Big(1+\frac{\log M}d\Big)\Big(\frac NM\Big)^{-2/d}\notag\\
&\le\frac{na_t\cH^d(\Gamma)\big(\diam\GG\big)^{2-d+t}}{2\tilde{c}_1\log2} \;\Big(1+\frac1d\Big) \Big(\frac NM\Big)^{-2/d}.
\label{eq:EtBound}
\end{align}
Combined with \eqref{eq:RelErr}, this reveals the dependence of the relative error on the computational cost, through the parameter $N$; specifically, the errors are $O(N^{-2/d})$ as $N\to\infty$. 
The above bound can also be written in terms of the refinement level $\ell$ 
using $\frac NM=M^{\ell-1}$, giving exponential convergence at the rate $M^{-2\ell/d}$ as $\ell\to\infty$.

While the bounds on the absolute errors in Corollaries \ref{cor:Jsingconvthm} and \ref{cor:Phi_t} blow up in the limit $t\nearrow d$ (with $N$ fixed), the bounds \eqref{eq:RelErr} and \eqref{eq:EtBound} show that the corresponding relative errors are bounded in this limit, because the integrals being approximated also blow up at the same rate.
Similarly, for a sequence of IFSs with $d\searrow t>0$ (and with $M$ constant, $\tilde{c}_1$ and $R_{\Gamma,\Hull}$ uniformly bounded away from zero, and $\cH^d\GG$ and $\diam\GG$ uniformly bounded above), the absolute errors blow up while the relative errors are uniformly bounded.
Furthermore, the same is true (again for fixed $N$) %
in the case where $d\searrow 0$ and $t\searrow 0$ with $d>t$, since the algebraic growth of the $\frac1d$ term in \eqref{eq:EtBound} is controlled by the exponential decay of the factor $(\frac NM)^{-2/d}$ (provided $\ell\ge2$).

In the case $t=0$ the logarithmic function $\Phi_0$ changes sign, so it is not in general possible to bound its integrals from below.
Thus we assume that $\diam\GG\le1$.
Under this assumption, for all $\eta\in\Gamma$, \eqref{eq:AntonioModified} gives
\begin{align*}
\big|I_\Gamma[\Phi_0(\cdot,\eta)&]\big|
=-I_\Gamma[\Phi_0(\cdot,\eta)]\\
&\ge \tilde{c}_1 d\int_0^{\diam\GG}-r^{d-1}\log r\,\rd r= \tilde{c}_1\big(\diam\GG\big)^d\Big(\frac1d-\log \diam\GG\Big),\\
\big|I_{\Gamma,\Gamma}[\Phi_0]\big|
&=-I_{\Gamma,\Gamma}[\Phi_0]
\ge \tilde{c}_1\big(\diam\GG\big)^d\Big(\frac1d-\log\diam\GG\Big)\cH^d\GG.
\end{align*}
Proceeding as above and using $a_0=2$, $\log\diam\GG\le0$, and $\frac1{1-\rho^d}=\frac{M}{M-1}\le2$, the bound \eqref{eq:RelErr} on the relative error extends to the case $t=0$ with
\begin{align*}
\cE_0&:=
\frac{n a_0 h^2\cH^d\GG}{2(1-\rho^d) \, \tilde{c}_1\big(\diam\GG\big)^d(\frac1d-\log\diam\GG)}\\
&\le
\frac{2 n \cH^d\GG\big(\diam\GG\big)^{2-d}}{\tilde{c}_1} d \bigg(\frac NM\bigg)^{-2/d}, 
\end{align*}
which tends to zero as $d\searrow t= 0$.

Regarding sequences of IFS attractors for which $d\searrow 0$, one can show for example that for any $\epsilon>0$ the family of Cantor sets in $\Rnotsn$ defined by \eqref {eq:CantorDef}, for $\rho\in (0,1/2-\epsilon)$, i.e.\ for $d=\log2/\log(1/\rho)\in(0,\log2/\log(1/(1/2-\epsilon)))\subset(0,1)$, have $\diam\GG=\cH^d\GG=1$ and $\tilde{c}_1$ and $R_{\Gamma,\Hull}$ uniformly bounded away from zero.

\section{Application to Galerkin Hausdorff BEM for acoustic scattering}
\label{sec:Phi}

We now apply our previous results to derive and analyse quadrature rules for the evaluation of 
\begin{align}
I_{\Gamma,\Gamma'}[\Phi]=\int_{\Gamma}\int_{\Gamma'}\Phi(x,y)\,\rd\cH^{d'}(y)\,\rd\cH^d(x),
\label{eq:double_int_Phi}
\end{align}
where $\Gamma,\Gamma'\subset\Rnotsn^{n}$ are as in Subsection \ref{sec:IFS} and 
$\Phi(x,y)$ is the fundamental solution of the Helmholtz equation in $\Rnotsn^{n+1}$, 
defined in \eqref{eq:Helmker2d}. 
\red{As \eqref{eq:double_int_Phi} suggests, our focus on this section is on the case $\mu=\cH^d|_\Gamma$, $\mu'=\cH^{d'}|_\Gamma$.} 
As explained in Section \ref{sec:intro}, integrals of the form \eqref{eq:double_int_Phi} arise as the elements of the Galerkin matrix in the ``Hausdorff BEM'' described in \cite{HausdorffBEM}, for acoustic scattering by fractal screens.
We first consider \eqref{eq:double_int_Phi} in the non-singular case where $\Gamma$ and $\Gamma'$ are disjoint, corresponding to the off-diagonal matrix entries in \cite{HausdorffBEM}. Our quadrature rule in this case is the composite barycentre rule, and the main result is Proposition \ref{prop:PhiDoubleRegular}. We then consider \eqref{eq:double_int_Phi} in the singular case where $\Gamma=\Gamma'$, corresponding to the diagonal matrix entries in \cite{HausdorffBEM}. 
Our quadrature rule for this case is defined in \eqref{eq:reduced} and \eqref{eq:PhiQuad}, and the main result is Theorem \ref{thm:PhiDoubleSingular}.

Before proceeding with the analysis we note the following regularity estimate on $\Phi$. Here and henceforth $a\lesssim b$ means $a\leq C b$ for some constant $C>0$, independent of $\Gamma$, $\Gamma'$, $h$ and $k$, which may change from occurrence to occurrence. 
\begin{lem}\label{lem:Phi_D2}
For all $\alpha\in \Nnotsn_0^{2n}$ with $|\alpha|=2$ and all $x\neq y$
\begin{equation*}%
\left|\rD^{\alpha}\Phi(x,y)\right|
\lesssim
\dfrac{(1+(k|x-y|)^{n/2+1})}{|x-y|^{n+1}}.
\end{equation*}
\end{lem}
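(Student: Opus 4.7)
The plan is to reduce the result to inequality \eqref{eq:DF} from the proof of Lemma~\ref{lem:W2bd}. Indeed, in both cases $n=1,2$ we have $\Phi(x,y)=F_n(r(x,y))$ with $r(x,y)=|x-y|$, where
\[
F_1(r):=\frac{\ri}{4}H_0^{(1)}(kr),\qquad F_2(r):=\frac{\re^{\ri kr}}{4\pi r}.
\]
Since $F_n\in C^\infty((0,\infty))$, \eqref{eq:DF} gives
$|\rD^\alpha\Phi(x,y)|\le |F_n''(r)|+|F_n'(r)|/r$,
so it suffices to prove that
\[
|F_n''(r)|+\frac{|F_n'(r)|}{r}\lesssim \frac{1+(kr)^{n/2+1}}{r^{n+1}},\qquad r>0,\ k>0,
\]
uniformly in $r$ and $k$.

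For $n=2$, I would just differentiate $F_2$ explicitly. A short computation gives
$|F_2'(r)|\lesssim (1+kr)/r^2$ and $|F_2''(r)|\lesssim (1+kr+(kr)^2)/r^3$, so the sum is controlled by $(1+kr)^2/r^3$. The elementary inequality $2kr\le 1+(kr)^2$ then yields $(1+kr)^2\le 2(1+(kr)^2)$, which is the claimed bound with $n/2+1=2$.

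For $n=1$, I would use the standard Hankel identities
$\tfrac{\rd}{\rd z}H_0^{(1)}(z)=-H_1^{(1)}(z)$ and $\tfrac{\rd}{\rd z}H_1^{(1)}(z)=H_0^{(1)}(z)-H_1^{(1)}(z)/z$ to write
\[
F_1'(r)=-\frac{\ri k}{4}H_1^{(1)}(kr),\qquad F_1''(r)=-\frac{\ri k^2}{4}\Bigl(H_0^{(1)}(kr)-\frac{H_1^{(1)}(kr)}{kr}\Bigr).
\]
I would then split the regimes $kr\le 1$ and $kr\ge1$ and invoke the classical asymptotic bounds for the Hankel functions: $|H_0^{(1)}(z)|\lesssim 1+|\log z|$ and $|H_1^{(1)}(z)|\lesssim 1/z$ for $z\in(0,1]$, and $|H_\nu^{(1)}(z)|\lesssim z^{-1/2}$ for $z\ge 1$, $\nu=0,1$. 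For $kr\ge 1$ these give $|F_1'(r)|/r\lesssim (kr)^{1/2}/r^2$ and $|F_1''(r)|\lesssim k^2[(kr)^{-1/2}+(kr)^{-3/2}]\lesssim (kr)^{3/2}/r^2$, which is the desired $(kr)^{n/2+1}/r^{n+1}$ for $n=1$.

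The main obstacle is the subcase $n=1$, $kr\le 1$, where one must control the logarithmic term in $H_0^{(1)}(kr)$. Here $|F_1'(r)|/r\lesssim 1/r^2$ is immediate from $|H_1^{(1)}(kr)|\lesssim 1/(kr)$, but in $F_1''(r)$ one picks up a term $k^2 H_0^{(1)}(kr)$ carrying a $\log(kr)$ singularity. The key observation is that for $z\in(0,1]$ one has $1+|\log z|\lesssim 1/z^2$, so this log contribution is absorbed into the $k^2(kr)^{-2}=1/r^2$ coming from the $H_1^{(1)}(kr)/(kr)$ term, yielding $|F_1''(r)|\lesssim 1/r^2$. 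Thus in this regime the left-hand side is bounded by $1/r^2\le (1+(kr)^{3/2})/r^2$, completing the proof.
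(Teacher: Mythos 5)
Your proposal is correct and follows essentially the same route as the paper: write $\Phi(x,y)=\tilde\Phi(r(x,y))$, invoke the chain-rule bound \eqref{eq:DF} from the proof of Lemma~\ref{lem:W2bd}, compute the radial derivatives (your formulas agree with \eqref{eq:Phi_Derivs}), and conclude via the standard Hankel estimates \eqref{eq:HankelBounds}. You merely spell out the case analysis $kr\le 1$ versus $kr\ge 1$ (including absorbing the logarithm via $1+|\log z|\lesssim z^{-2}$) that the paper leaves implicit.
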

\begin{proof}
The proof is analogous to that of Lemma \ref{lem:W2bd}. We first note that $\Phi(x,y)=\tilde{\Phi}(r(x,y))$, where
	\[
	\tilde\Phi(r) = \left\{
	\begin{array}{cc}
	\dfrac{\ri}{4}H_0^{(1)}(kr),&n=1,\\
	\dfrac{\re^{\ri kr}}{4\pi r},&n=2,
	\end{array}
	\right.
	\]
and by standard calculations (e.g.\ \cite[10.6.2--3]{DLMF}) we find that
\begin{align}
\label{eq:Phi_Derivs}
	\tilde\Phi'(r) = \left\{
	\begin{array}{cc}
	-\dfrac{\ri k}{4}H_1^{(1)}(kr),\\%&n=1,\\
	\dfrac{(\ri k r-1)\re^{\ri kr}}{4\pi r^2},%
	\end{array}
	\right.
\quad\text{and }
	\tilde\Phi''(r) = \left\{
	\begin{array}{cc}
	\dfrac{\ri k^2}{4}\left(\dfrac{1}{kr}H_1^{(1)}(kr)-H_0^{(1)}(kr)\right),&n=1,\\
	\dfrac{(2-2\ri kr -(kr)^2)\re^{\ri kr}}{4\pi r^3},&n=2.
	\end{array}
	\right.
\end{align}
Inserting these results into \eqref{eq:DF} (with $F=\tilde{\Phi}$) gives the claimed result, after application of %
the following standard bounds (which follow from results in \cite[Section~10]{DLMF}): %
\begin{equation}
 |H_0^{{(1)}}(z)|\lesssim\left\{
	\begin{array}{cc}
	(1+|\log z|),&0<z\leq 1,\\
	z^{-1/2},&z>1,
	\end{array}
	\right.
\qquad 	
	|H_1^{{(1)}}(z)|\lesssim\left\{
	\begin{array}{cc}
z^{-1},&0<z\leq 1,\\
	z^{-1/2},&z>1.
	\end{array}
	\right.
\label{eq:HankelBounds}
\end{equation}
\end{proof}

We now consider \eqref{eq:double_int_Phi} in the non-singular case where $\Gamma$ and $\Gamma'$ are disjoint. The following result follows from Theorem \ref{th:MidLip2}(iii) and Lemma \ref{lem:Phi_D2}, and the fact that $(1+(kz)^{n/2+1})/z^{n+1}$ is a positive decreasing function on $(0,\infty)$ for $n=1,2$. 
\begin{prop}
\label{prop:PhiDoubleRegular}
Let $\Gamma\subset\Rnotsn^n$ and $\Gamma'\subset\Rnotsn^{n}$ be as in Subsection \ref{sec:IFS}. 
Let $h>0$, and let $\UnionHull$ be as in \eqref{eq:LhHull} and $\UnionHull'$ be as in \eqref{eq:LhHull} with $\Gamma$ replaced by $\Gamma'$. Suppose that $\UnionHull\cap\UnionHull'=\emptyset$. %
Then
\begin{align*}
\label{}
\left|I_{\Gamma,\Gamma'}[\Phi]
-Q^h_{\Gamma,\Gamma'}[\Phi]\right| 
&\lesssim h^2\cH^{d}(\Gamma)\cH^{d'}(\Gamma')
\dfrac{\left(1+(k\delta)^{n/2+1}\right)}
{\delta^{n+1}}, \qquad n=1,2,
\end{align*}
where $\delta=\dist(\UnionHull,\UnionHull')$.
\end{prop}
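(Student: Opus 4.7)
The plan is to combine Theorem~\ref{th:MidLip2}(iii), applied with $f=\Phi$, with the pointwise second-derivative bound of Lemma~\ref{lem:Phi_D2}. The hypothesis $\UnionHull\cap\UnionHull'=\emptyset$ ensures that $|x-y|\geq\delta>0$ for every $(x,y)\in\UnionHull\times\UnionHull'$, so $\Phi$ is smooth on the closed product set and the estimate in Theorem~\ref{th:MidLip2}(iii) is available, giving
\[
\left|I_{\Gamma,\Gamma'}[\Phi]-Q^h_{\Gamma,\Gamma'}[\Phi]\right|
\;\leq\; 2n\,h^2\,\cH^d(\Gamma)\,\cH^{d'}(\Gamma')\!\!\sup_{(x,y)\in\UnionHull\times\UnionHull'}\,\max_{\substack{\alpha\in\N_0^{2n}\\|\alpha|=2}}|D^\alpha\Phi(x,y)|.
\]
(Recall that $\Gamma,\Gamma'\subset\R^n$, so here $n+n'=2n\leq 4$.) Lemma~\ref{lem:Phi_D2} then bounds the inner maximum by a constant multiple of $g(|x-y|)$, where $g(z):=(1+(kz)^{n/2+1})/z^{n+1}$, with a constant that is independent of $k$, $h$, $\Gamma$ and $\Gamma'$.

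The remaining task is to show that $\sup_{(x,y)\in\UnionHull\times\UnionHull'}g(|x-y|)=g(\delta)$. This is the monotonicity observation quoted in the comment preceding the proposition. For $n=1$ we have $g(z)=z^{-2}+k^{3/2}z^{-1/2}$, and for $n=2$ we have $g(z)=z^{-3}+k^2z^{-1}$; in both cases $g$ is manifestly positive and strictly decreasing on $(0,\infty)$, as one checks by differentiating term by term. Since $|x-y|\geq\delta$ on the product domain, the supremum is $g(\delta)$, and combining this with the displayed inequality above yields the stated bound.

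There is no real obstacle in this argument: it is a routine combination of two results that have already been established. The only points requiring a sanity check are (i) that the constant absorbed in $\lesssim$ remains independent of $k$, $h$, $\Gamma$, $\Gamma'$, which is automatic from the form of the bound in Lemma~\ref{lem:Phi_D2} and from the fact that the geometric dependence on $\Gamma$, $\Gamma'$ is displayed explicitly through the factor $\cH^d(\Gamma)\cH^{d'}(\Gamma')$, and (ii) the monotonicity of $g$, which is immediate. No information about the structure of $\Gamma$ or $\Gamma'$ beyond the separation distance $\delta$ enters, which is why the resulting estimate depends on the geometry only through $\delta$.
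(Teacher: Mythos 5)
Your argument is correct and is exactly the paper's route: the paper obtains the proposition by combining Theorem~\ref{th:MidLip2}(iii) with Lemma~\ref{lem:Phi_D2} and the observation that $z\mapsto(1+(kz)^{n/2+1})/z^{n+1}$ is positive and decreasing on $(0,\infty)$ for $n=1,2$, which is precisely your monotonicity step with $|x-y|\ge\delta$ on $\UnionHull\times\UnionHull'$. Your explicit verification of the decreasing property and of the $k$-independence of the implied constant just spells out details the paper leaves implicit.
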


We now turn to the singular case where $\Gamma=\Gamma'$ and \eqref{eq:double_int_Phi} becomes
\begin{align}
\label{eq:double_int_Phi_singular}
I_{\Gamma,\Gamma}[\Phi]= \int_{\Gamma}\int_{\Gamma}\Phi(x,y)\,\rd\cH^{d}(y)\,\rd\cH^{d}(x).
\end{align}
For fixed $k>0$, we have (by \cite[(10.8.2)]{DLMF} in the case $n=1$) that
\begin{align}
\label{eq:SingBehaviour}
\Phi(x,y)\sim C_n\Phi_{n-1}(x,y), \qquad |x-y|\to 0, \quad n=1,2,
\end{align}
where 
\[ C_1=-1/(2\pi) \text{ and }C_2=1/(4\pi).\]
Furthermore, the function 
\[\Phi_* := \Phi-C_n\Phi_{n-1}\] 
is continuous across $x=y$ (in fact, Lipschitz continuous), with (see \cite[10.8.2]{DLMF} for the case $n=1$)
\begin{align*}%
\lim_{x\to y}\Phi_*(x,y) = 
\begin{cases}
\dfrac{- \log \left(k/2\right)- \gamma }{2 \pi }+\dfrac{\ri}{4},& n=1,\\[3mm]
\dfrac{\ri k}{4\pi}, &n=2,
\end{cases}
\qquad y\in \Rnotsn^n.
\end{align*}
This motivates a singularity-subtraction approach for evaluating \eqref{eq:double_int_Phi_singular}, using the splitting 
\begin{align}
\label{eq:Splitting}
\Phi = C_n\Phi_{n-1} + \Phi_*,
\end{align}
integrating $\Phi_{n-1}$ using the quadrature rules  from Section \ref{sec:Phi_t}, and $\Phi_*$ using the composite barycentre rule. 
However, the application of \eqref{eq:Splitting} is complicated by the fact that while \eqref{eq:Splitting} is designed to deal efficiently with the singular behaviour \eqref{eq:SingBehaviour}, it is not well adapted to the oscillatory behaviour of $\Phi$ as $k|x-y|\to \infty$. 
To deal with this systematically, we introduce a parameter $\cosc>0$, proportional to the maximum number of wavelengths there can be across $\Gamma$ for us to consider the integral non-oscillatory. We then define our quadrature rule for \eqref{eq:double_int_Phi_singular} differently depending on whether $k\diam(\Gamma)\leq \cosc$ or $k\diam(\Gamma)> \cosc$. Note that the non-oscillatory regime $k\diam(\Gamma)\leq \cosc$ is the one relevant for the BEM application in \cite{HausdorffBEM}, since the diameter of the BEM elements in \cite{HausdorffBEM} needs to be small compared to the wavelength in order to achieve acceptable approximation error.

\begin{defn}[Singularity-subtraction quadrature rule $Q^h_{\Gamma,\Gamma,\Phi}$] 
When $k\diam(\Gamma)\leq \cosc$ we apply \eqref{eq:Splitting} directly to \eqref{eq:double_int_Phi_singular},
approximating
\begin{equation}\label{eq:reduced}
I_{\Gamma,\Gamma}[\Phi]\approx Q^h_{\Gamma,\Gamma,\Phi}:= C_n Q^h_{\Gamma,\Gamma,n-1} + Q^h_{\Gamma,\Gamma}[\Phi_*],
	\end{equation}
where $Q^h_{\Gamma,\Gamma,n-1}$ is defined as in \eqref{eq:non_overlap_quad_gen}. 

When $k\diam(\Gamma)>\cosc$ we first partition $\Gamma$ into \red{subset}s of diameter at most $h_*:=\cosc/k$ before applying \eqref{eq:Splitting} only to the singular terms in the resulting decomposition of \eqref{eq:double_int_Phi_singular}, approximating
\begin{align}
I_{\Gamma,\Gamma}[\Phi]\approx Q^h_{\Gamma,\Gamma,\Phi}%
&:=
 \sum_{\bm\in L_{h_*}(\Gamma)} \bigg( C_n
Q^h_{\Gamma_{\bm},\Gamma_{\bm},n-1}
+ Q^h_{\Gamma_{\bm},\Gamma_{\bm}}[\Phi_*]\bigg)\nonumber
 \\&\qquad+ \sum_{\bm\in L_{h_*}(\Gamma)}\sum_{\substack{\bm'\in L_{h_*}(\Gamma)\\ \bm'\neq\bm}} Q^h_{\Gamma_{\bm},\Gamma_{\bm'}}[\Phi],\label{eq:PhiQuad}
\end{align}
where $Q^h_{\Gamma_{\bm},\Gamma_{\bm},n-1}$ is defined as in \eqref{eq:non_overlap_quad_gen} with $\Gamma$ replaced by $\Gamma_{\bm}$. 
\end{defn}

For the error analysis of \eqref{eq:reduced}, we recall that an error estimate for $Q^h_{\Gamma,\Gamma,n-1}$ was presented in Corollary~\ref{cor:Phi_t}, so it remains to derive an error estimate for $Q^h_{\Gamma,\Gamma}[\Phi_*]$. Naively applying Theorem~\ref{th:MidLip1} would result in an $O(h)$ estimate for $Q^h_{\Gamma,\Gamma}[\Phi_*]$, because while $\Phi_*$ is Lipschitz continuous across $x=y$, its derivative is not Lipschitz. An $O(h^2)$ estimate (matching that for $Q^h_{\Gamma,\Gamma,n-1}$ provided by Corollary~\ref{cor:Phi_t}) can be obtained via a first-principles analysis, which we present in Proposition~\ref{prop:PhiStarDoubleSingular}. 
We then apply this result to give a full error analysis of both \eqref{eq:reduced} and \eqref{eq:PhiQuad} in Theorem~\ref{thm:PhiDoubleSingular}. 
Our analysis is restricted to the case of a \red{homogeneous} IFS, but we expect that with further non-trivial work a similar analysis could be carried out for the non-\red{homogeneous} case - see the discussion before Theorem \ref{thm:PhiDoubleSingularNonUniform}, where a weaker $O(h)$ estimate is proved for the non-\red{homogeneous} case.

Our arguments will make use of the following bounds on the second-order derivatives of $\Phi_*$. 
\begin{lem}\label{lem:PhiStar_D2}
For all $\alpha\in \Nnotsn_0^{2n}$ with $|\alpha|=2$ and all $x\neq y$
\begin{equation*}%
\left|\rD^{\alpha}\Phi_*(x,y)\right|
\lesssim
\begin{cases}
k^2\big(1+\big|\log (k|x-y|)\big|\big), &n=1, \quad 0<k|x-y|\leq 1, \\
\dfrac{k^2}{(k|x-y|)^{1/2}}, &n=1, \quad |x-y|>1,\\%[3mm]
\dfrac{k^2}{|x-y|}, &n=2.
\end{cases}
\end{equation*}
\end{lem}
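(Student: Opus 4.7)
The proof follows the template of Lemma~\ref{lem:W2bd}: since $\Phi_*(x,y)$ depends only on $r=|x-y|$, we write $\Phi_*(x,y)=\tilde\Phi_*(r(x,y))$ and invoke \eqref{eq:DF} (with $F=\tilde\Phi_*$) to reduce the claim to uniform estimates on $|\tilde\Phi_*''(r)|+|\tilde\Phi_*'(r)|/r$ in the three stated regimes. From \eqref{eq:Helmker2d} and \eqref{eq:Phit} the radial profile is $\tilde\Phi_*(r)=(\re^{\ri kr}-1)/(4\pi r)$ for $n=2$, and $\tilde\Phi_*(r)=\frac{\ri}{4}H_0^{(1)}(kr)+\frac{1}{2\pi}\log r$ for $n=1$.

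For $n=2$, $\tilde\Phi_*$ extends analytically across $r=0$, and direct differentiation expresses $\tilde\Phi_*'(r)$ and $\tilde\Phi_*''(r)$ as rational functions whose numerators are linear combinations of $\re^{\ri kr}$, $\ri kr\,\re^{\ri kr}$, $(kr)^2\re^{\ri kr}$ and constants. The apparent poles at $r=0$ are removed by the elementary Taylor bounds $|\re^{\ri kr}-1|\le kr$, $|\re^{\ri kr}-1-\ri kr|\le (kr)^2/2$, $|\re^{\ri kr}-1-\ri kr+(kr)^2/2|\le (kr)^3/6$, while $|\re^{\ri kr}|=1$ controls the large-$kr$ regime; a short calculation then yields $|\tilde\Phi_*''(r)|+|\tilde\Phi_*'(r)|/r\lesssim k^2/r$ uniformly in $r>0$.

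For $n=1$, \eqref{eq:Phi_Derivs} and termwise differentiation give
\[ \tilde\Phi_*'(r)=-\tfrac{\ri k}{4}H_1^{(1)}(kr)+\tfrac{1}{2\pi r}, \qquad \tilde\Phi_*''(r)=\tfrac{\ri k^2}{4}\bigl(\tfrac{1}{kr}H_1^{(1)}(kr)-H_0^{(1)}(kr)\bigr)-\tfrac{1}{2\pi r^2}. \]
In the regime $0<kr\le 1$, I would invoke the standard small-argument expansions (see e.g.\ \cite[\S10.7--10.8]{DLMF})
\[ H_0^{(1)}(z)=1+\tfrac{2\ri}{\pi}\bigl(\log(z/2)+\gamma\bigr)+O(z^2\log z), \qquad H_1^{(1)}(z)=-\tfrac{2\ri}{\pi z}+O(z|\log z|) \text{ as } z\to 0. \]
Multiplying the leading pole $-\tfrac{2\ri}{\pi (kr)^2}$ of $H_1^{(1)}(kr)/(kr)$ by $\tfrac{\ri k^2}{4}$ gives exactly $\tfrac{1}{2\pi r^2}$, cancelling the $-\tfrac{1}{2\pi r^2}$ from the $\log r$ derivative; similarly the leading pole of $H_1^{(1)}(kr)$ cancels $\tfrac{1}{2\pi r}$ in $\tilde\Phi_*'(r)$. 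The surviving contributions are $O\bigl(k^2(1+|\log(kr)|)\bigr)$ for $\tilde\Phi_*''(r)$ and $O\bigl(k^2 r(1+|\log(kr)|)\bigr)$ for $\tilde\Phi_*'(r)$; dividing the latter by $r$ yields the same bound. In the regime $r>1$, the Hankel bounds \eqref{eq:HankelBounds} control the Hankel contributions by $k^2/(kr)^{1/2}$; the residual $1/r^2$ term from the $\log r$ derivative is dominated by this quantity whenever $kr\ge 1$ and $r\ge 1$ (since then $r^{-2}\le k^{3/2}r^{-1/2}=k^2/(kr)^{1/2}$), and an elementary comparison covers the subregime $r>1$, $kr<1$.

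The main obstacle is the cancellation bookkeeping in the $n=1$ small-argument case. The subtraction $\Phi-C_1\Phi_0$ was designed precisely to remove the leading logarithmic singularity of $\Phi$, and making this quantitative at the level of second derivatives requires tracking both Hankel series one order past their leading singular terms and verifying that the resulting pole structure matches the singular part of $\log r$ exactly, not merely up to bounded corrections.
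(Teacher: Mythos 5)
Your proposal is correct and follows essentially the same route as the paper's proof: reduce to the radial profile $\tilde\Phi_*=\tilde\Phi-C_n\tilde\Phi_{n-1}$, compute $\tilde\Phi_*'$ and $\tilde\Phi_*''$ (your formulas agree with \eqref{eq:PhiStarEstimates1}'s precursors, just written before combining the $-\frac{2\ri}{\pi kr}$-type terms into the Hankel expressions), use the small-argument series to exhibit the pole/log cancellation and the bounds \eqref{eq:HankelBounds} for large argument, and insert the resulting estimates into \eqref{eq:DF}. Your explicit treatment of the overlap subregime $|x-y|>1$, $k|x-y|\le 1$ is a minor extra care the paper leaves implicit, but the substance of the argument is the same.
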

\begin{proof}
The proof is analogous to that of Lemmas \ref{lem:W2bd} and \ref{lem:Phi_D2}. We first note that $\Phi_*(x,y)=\tilde{\Phi}_*(r(x,y))$, where $\tilde{\Phi}_*:=\tilde{\Phi}-C_n\tilde{\Phi}_{n-1}$, so that combining \eqref{eq:Phit_Derivs} and \eqref{eq:Phi_Derivs} gives
\[
	\tilde\Phi_*'(r) = \left\{	\begin{array}{cc}
	\!\!-\dfrac{\ri k}{4}\left(H_1^{(1)}(kr)+\dfrac{2\ri}{\pi kr}\right),&n=1,%
	\\
	\dfrac{(\ri k r-1)\re^{\ri kr}+1}{4\pi r^2},&n=2,%
	\end{array}	\right.%
\]
and
\[
\tilde\Phi_*''(r) = \left\{
\begin{array}{cc}
\!\dfrac{\ri k^2}{4}\left(\dfrac{1}{kr}H_1^{(1)}(kr)-H_0^{(1)}(kr)+\dfrac{2\ri}{\pi (kr)^2}\right),&n=1,\\
\dfrac{(2-2\ri kr -(kr)^2)\re^{\ri kr}-2}{4\pi r^3},&n=2.
\end{array}
\right.
\]
Using the series expansions for the exponential and the Hankel functions (see \cite[10.8.1]{DLMF}) and the bounds \eqref{eq:HankelBounds}, one finds that
\begin{align}
\label{eq:PhiStarEstimates1}
	|\tilde\Phi_*'(r)| \lesssim \left\{
	\begin{array}{cl}
	k^2r(1+|\log kr|), %
	\\
	\dfrac{k}{(kr)^{1/2}}, %
	\\
	\dfrac{k^2}{1+kr},%
	\end{array}
	\right.
\quad%
	|\tilde\Phi_*''(r)| \lesssim \left\{
	\begin{array}{cl}
k^2(1+|\log kr|),  &n=1,\,0<kr\leq 1, \\
\dfrac{k^2}{(kr)^{1/2}}, &n=1, \,kr>1,\\
	\dfrac{k^3}{1+kr},&n=2.
	\end{array}
	\right.
\end{align}
Inserting these bounds into \eqref{eq:DF} (with $F=\tilde{\Phi}_*$) gives the result.
\end{proof}

In what follows we shall also make use of the fact that 
\begin{equation}\label{eq:log4}
\text{if}\quad 0<x_1\leq x_2\leq x_3<\infty\quad \text{then} \quad
|\log x_2| \leq |\log x_1|+ |\log x_3|, 
\end{equation}
\red{and that, in the case $\mu=\cH^d|_\Gamma$, for $\bm\in L_h(\Gamma)$ it follows from \eqref{eq:sim_measure} that 
\begin{align}
\label{eq:HdGammam_Est}
\cH^d(\Gamma_{\bm})
\leq  \left(\frac{h}{\diam(\Gamma)}\right)^d \cH^d(\Gamma).
\end{align}}

We now consider the approximation of $I_{\Gamma,\Gamma}[\Phi_*]$. 
\begin{prop}
\label{prop:PhiStarDoubleSingular}
Let $\Gamma\subset\Rnotsn^n$ be as in Subsection \ref{sec:IFS}, with $d=\dimH(\Gamma)>n-1$. Suppose that $\Gamma$ is \red{homogeneous} in the sense of Subsection \ref{sec:FurtherAssumptions}, with contraction factor $\rho\in(0,1)$. Suppose also that $\Gamma$ is hull-disjoint in the sense of \eqref{eq:RHulldef}. 
Let $k>0$ and suppose that $k\diam(\Gamma)\leq \cosc$. Let $0<h\leq \diam(\Gamma)$. Then 
\begin{align}
\label{eq:PhiStarEstimateHIndep}
\Big|I_{\Gamma,\Gamma}[\Phi_*]
-Q^h_{\Gamma,\Gamma}[\Phi_*]\Big| 
& \lesssim  ck^2h^2\big(\cH^d(\Gamma)\big)^2
\le ck^2\big(\diam(\Gamma)\cH^d(\Gamma)\big)^2\left(\frac{N}{M}\right)^{-2/d},
\end{align}
where the constant implied in $\lesssim$ depends only on $\cosc$ and
\begin{align*}\label{}
c = \begin{cases}
\dfrac{|\log{\rho}|}{\log{M}} +\big|\log(kR_{\Gamma,\Hull})\big|= 
\dfrac1d +\big|\log(kR_{\Gamma,\Hull})\big|, 
& n=1,\\[3mm]
\dfrac{\rho M}{R_{\Gamma,\Hull}(\rho M-1)}=\dfrac{1}{R_{\Gamma,\Hull}(1- M^{1/d-1})}, & n=2.
\end{cases} 
\end{align*}
\end{prop}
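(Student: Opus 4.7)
The plan is to decompose the quadrature error as a sum over pairs of level-$\ell$ components. Since $\Gamma$ is uniform, $L_h(\Gamma)=I_\ell$ for $\ell$ as in \eqref{eq:rho_ell}, so each sub-component satisfies $\diam(\Gamma_{\bm})=\rho^\ell\diam(\Gamma)\le h$ and $\cH^d(\Gamma_{\bm})=M^{-\ell}\cH^d(\Gamma)$. I would write
\[
I_{\Gamma,\Gamma}[\Phi_*]-Q^h_{\Gamma,\Gamma}[\Phi_*]=\sum_{(\bm,\bm')\in I_\ell\times I_\ell}E_{\bm,\bm'},
\]
with
\[
E_{\bm,\bm'}:=\int_{\Gamma_{\bm}}\int_{\Gamma_{\bm'}}\bigl[\Phi_*(x,y)-\Phi_*(x_{\bm},x_{\bm'})\bigr]\,\rd\cH^d(y)\,\rd\cH^d(x),
\]
and split the double sum into diagonal ($\bm=\bm'$) and off-diagonal ($\bm\neq\bm'$) parts, grouping the latter by $\nu:=\ell_*(\bm,\bm')\in\{1,\ldots,\ell\}$.

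For the off-diagonal part, which is the main contribution, hull-disjointness and \eqref{eq:rprod_ineq} ensure $\dist(\Hull(\Gamma_{\bm}),\Hull(\Gamma_{\bm'}))\ge R_{\Gamma,\Hull}\rho^{\nu-1}$, so $\Phi_*$ is $C^2$ on each hull-product. Applying Theorem~\ref{th:MidLip2}(iii) with trivial single-node quadrature on each pair yields
\[
|E_{\bm,\bm'}|\le h^2\cH^d(\Gamma_{\bm})\cH^d(\Gamma_{\bm'})\sup_{\Hull(\Gamma_{\bm})\times\Hull(\Gamma_{\bm'})}\|H\Phi_*\|_2,
\]
with the Hessian bounded by Lemma~\ref{lem:PhiStar_D2}. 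Counting via Lemma~\ref{lem:Ik_reorder} and using $M\rho^d=1$ gives
\[
\sum_{\substack{\bm,\bm'\in I_\ell\\\ell_*(\bm,\bm')=\nu}}\cH^d(\Gamma_{\bm})\cH^d(\Gamma_{\bm'})=(M-1)M^{-\nu}\bigl(\cH^d(\Gamma)\bigr)^2=\rho^{\nu d}(M-1)\bigl(\cH^d(\Gamma)\bigr)^2.
\]
For $n=2$, substituting $\sup\|H\Phi_*\|\lesssim k^2/(R_{\Gamma,\Hull}\rho^{\nu-1})$ produces the geometric series $\sum_\nu\rho^{\nu(d-1)}$, which converges because $d>1$; simplification reproduces the stated constant $\rho M/(R_{\Gamma,\Hull}(\rho M-1))$. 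For $n=1$ the Hessian carries a logarithmic factor $1+|\log(k|x-y|)|$, which I bound via \eqref{eq:log4} by $1+|\log(kR_{\Gamma,\Hull})|+(\nu-1)|\log\rho|$; then $\sum_\nu M^{-\nu}=(M-1)^{-1}$ and $\sum_\nu(\nu-1)M^{-\nu}=(M-1)^{-2}$, and the elementary inequality $\log M\le M-1$ upgrades $|\log\rho|/(M-1)$ to $|\log\rho|/\log M=1/d$, yielding the claimed constant.

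For the diagonal $\bm=\bm'$, the estimates \eqref{eq:PhiStarEstimates1} give $|\nabla\Phi_*(x,y)|\lesssim k^2 h(1+|\log(kh)|)$ for $n=1$ (and $\lesssim k^2$ for $n=2$) on $\Hull(\Gamma_{\bm})\times\Hull(\Gamma_{\bm})$, so Theorem~\ref{th:MidLip2}(i) yields $|E_{\bm,\bm}|\lesssim k^2h^2(1+|\log(kh)|)\cH^d(\Gamma_{\bm})^2$. Summing over the $M^\ell$ diagonal indices and using $M^{-\ell}\le(h/\diam(\Gamma))^d$ gives a diagonal contribution of order $k^2h^2(\cH^d(\Gamma))^2(h/\diam\Gamma)^d(1+|\log(kh)|)$. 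I would then invoke the elementary bound $\sup_{x\in(0,1]}x^d|\log x|\le 1/(de)$ together with the triangle inequality for logarithms to majorize $(h/\diam\Gamma)^d(1+|\log kh|)$ by $1/d+|\log(k\diam\Gamma)|+\mathrm{const}$, and finally absorb the difference between $|\log(k\diam\Gamma)|$ and $|\log(kR_{\Gamma,\Hull})|$ into the $\cosc$-dependent implied constant using $kR_{\Gamma,\Hull}\le k\diam\Gamma\le\cosc$.

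The hard part will be the careful bookkeeping of constants in the $n=1$ case: combining the off-diagonal geometric series with the diagonal bound, and collapsing the various logarithmic terms (involving $k\diam\Gamma$, $kR_{\Gamma,\Hull}$, $kh$, and $\rho$) into the clean form $c=1/d+|\log(kR_{\Gamma,\Hull})|$, while keeping the implied $\lesssim$-constant independent of $\Gamma$, $h$ and $k$ (i.e., depending only on $\cosc$). Once the absolute-error bound is established, the second inequality in \eqref{eq:PhiStarEstimateHIndep} is immediate from $h\le\diam(\Gamma)(N/M)^{-1/d}$ (see \eqref{eq:HBound}).
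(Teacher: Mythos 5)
Your proposal follows essentially the same route as the paper's proof: the same splitting into diagonal and off-diagonal blocks over $L_h(\Gamma)=I_\ell$, the same use of hull-disjointness via \eqref{eq:rprod_ineq}, Lemma \ref{lem:Ik_reorder} and the geometric series (convergent for $n=2$ because $\rho M>1$ when $d>1$), and the same logarithmic bookkeeping via \eqref{eq:log4}, $z^d|\log z|\le 1/(\re d)$, $1\le 1/d$ and $\log M\le M-1$, ending with \eqref{eq:HBound}. The only (cosmetic) divergence is on the diagonal, where the paper bounds the increment $|\tilde\Phi_*(r)-\tilde\Phi_*(0)|$ directly instead of invoking Theorem \ref{th:MidLip2}(i); just note that for $n=2$ the per-block diagonal bound is only $O\big(k^2h\,\cH^d(\Gamma_{\bm})^2\big)$, so after summing you still need, as the paper does, $h^{d-1}/\diam(\Gamma)^d\le 1/R_{\Gamma,\Hull}$ (valid since $d>1$, $h\le\diam(\Gamma)$ and $R_{\Gamma,\Hull}\le\diam(\Gamma)$) to absorb it into the stated $n=2$ constant.
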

\begin{proof}
We first note that
\begin{align*}
\label{}
\left|I_{\Gamma,\Gamma}[\Phi_*]
-Q^h_{\Gamma,\Gamma}[\Phi_*]\right| 
\leq &
\underbrace{ \sum_{\bm\in L_{h}(\Gamma)} \left|I_{\Gamma_{\bm},\Gamma_{\bm}}[\Phi_*]-Q^h_{\Gamma_{\bm},\Gamma_{\bm}}[\Phi_*] \right|}_{=:S_1}\\
& + \underbrace{\sum_{\bm\in L_{h}(\Gamma)}\sum_{\substack{\bm'\in L_{h}(\Gamma)\\ \bm'\neq\bm}} \left|I_{\Gamma_{\bm},\Gamma_{\bm'}}[\Phi_*]-Q^h_{\Gamma_{\bm},\Gamma_{\bm'}}[\Phi_*] \right|}_{=:S_2}.
\end{align*}
For the analysis of $S_1$ we note that for $\bm\in L_h(\Gamma)$
\begin{align}
\Big|I_{\Gamma_{\bm},\Gamma_{\bm}}[\Phi_*]-Q^h_{\Gamma_{\bm},\Gamma_{\bm}}&[\Phi_*] \Big| \nonumber\\
&=
\left|\int_{\Gamma_{\bm}}\int_{\Gamma_{\bm}} \left(\Phi_*(x,y)-\Phi_*(x_{\bm},x_{\bm})\right)\,\rd\cH^d(y)\rd\cH^d(x)\right|\notag\\
&\leq \int_{\Gamma_{\bm}}\int_{\Gamma_{\bm}} \left|\tilde\Phi_*(r(x,y))-\tilde\Phi_*(0)\right|\,\rd\cH^d(y)\rd\cH^d(x).
\label{eq:TildePhiStarEstimate}
\end{align}
By \eqref{eq:PhiStarEstimates1} we have that, when $kr\leq \cosc$,
\[ 
|\tilde\Phi_*(r)-\tilde\Phi_*(0)| \lesssim \begin{cases}
(kr)^2(1+|\log{kr}|), & n=1,\\[3mm]
k^2r, & n=2.
\end{cases} 
\]
Since both $z^2(1+|\log z|)$ and $z$ are increasing functions of $z\in(0,\infty)$, applying a uniform upper bound on the integrand in 
\eqref{eq:TildePhiStarEstimate} (with $r\leq h$), then summing over $\bm$ and using \eqref{eq:index} and \eqref{eq:HdGammam_Est} gives
\begin{align*}
S_1 
&{\le |\tilde\Phi_*(h)-\tilde\Phi_*(0)|\sum_{\bm\in L_{h}(\Gamma)} \cH^d(\Gamma_{\bm})^2}
\\&{\le |\tilde\Phi_*(h)-\tilde\Phi_*(0)|\left(\frac{h}{\diam(\Gamma)}\right)^d\cH^d(\Gamma)
\sum_{\bm\in L_{h}(\Gamma)} \cH^d(\Gamma_{\bm})}
\\&= {|\tilde\Phi_*(h)-\tilde\Phi_*(0)|\left(\frac{h}{\diam(\Gamma)}\right)^d\cH^d(\Gamma)^2}
\\&\lesssim k^2\left(\frac{h}{\diam(\Gamma)}\right)^d\cH^d(\Gamma)^2
\begin{cases}
h^2(1+|\log{kh}|), & n=1,\\[3mm]
h, & n=2.
\end{cases} 
\end{align*}

For the analysis of $S_2$ we note that if $\bm\in L_h(\Gamma)$ then $\UnionHullbm=\Hull(\Gamma_{\bm})$, so that for $\bm\neq \bm'$ we have, by \eqref{eq:rprod_ineq} and the assumption that $\Gamma$ is hull-disjoint, that
\[\dist\big(\UnionHullbm,\UnionHullbmp\big)=\dist\big(\Hull(\Gamma_{\bm}),\Hull(\Gamma_{\bm'})\big)\geq R_{\Gamma,\Hull}\rho^{\ell_*(\bm,\bm')-1},\]
where $\ell_*(\bm,\bm')$ was defined in \eqref{eq:lstar}. 
Now, since $\Gamma$ is \red{homogeneous} we have that $L_h(\Gamma)=I_\ell$, where $\ell$ satisfies \eqref{eq:rho_ell}. Also, $\cH^d(\Gamma_{\bm})=M^{-\ell}\cH^d(\Gamma)$ for $\bm\in L_h(\Gamma)$.

In the case $n=2$, given $\bm\in L_h(\Gamma)=I_\ell$, by Lemma \ref{lem:PhiStar_D2}, Theorem \ref{th:MidLip2}\eqref{pt:2D} and Lemma \ref{lem:Ik_reorder} we have
\begin{align*}
\sum_{\substack{\bm'\in L_{h}(\Gamma)\\ \bm'\neq\bm}} \Big|I_{\Gamma_{\bm},\Gamma_{\bm'}}[\Phi_*]-&Q^h_{\Gamma_{\bm},\Gamma_{\bm'}}[\Phi_*] \Big|
\lesssim \sum_{\substack{\bm'\in L_{h}(\Gamma)\\ \bm'\neq\bm}} \frac{k^2h^2\cH^d(\Gamma_{\bm})\cH^d(\Gamma_{\bm'})}{R_{\Gamma,\Hull}\rho^{\ell_*(\bm,\bm')-1}}\\
&= \frac{k^2h^2\cH^d(\Gamma_{\bm})\cH^d(\Gamma)M^{-\ell}}{R_{\Gamma,\Hull}}
\sum_{\ell_*=1}^{\ell-1}\frac{M^{\ell-\ell_*}(M-1)}{\rho^{\ell_*-1}}\\
&= \frac{k^2h^2\cH^d(\Gamma_{\bm})\cH^d(\Gamma)}{R_{\Gamma,\Hull}} (M-1)\rho \sum_{\ell_*=1}^{\ell-1}(\rho M)^{-\ell_*}\\
&\leq \frac{k^2h^2\cH^d(\Gamma_{\bm})\cH^d(\Gamma)}{R_{\Gamma,\Hull}} \frac{(M-1)\rho}{(\rho M-1)},
\end{align*}
where we used the fact that $M\rho^d=1$ (by \eqref{eq:dfirst}) and $d>n-1=1$ (an assumption of the theorem) to deduce that $\rho M>1$, which means we can take the summation limit to infinity in the geometric series. 
Finally, summing over $\bm$ and bounding $M-1\leq M$ gives
\[S_2\lesssim \frac{k^2h^2\cH^d(\Gamma)^2}{R_{\Gamma,\Hull}} \frac{\rho M}{(\rho M-1)}, \qquad n=2. \]

In the case $n=1$, given $\bm\in L_h(\Gamma)=I_\ell$, by Lemma \ref{lem:PhiStar_D2}, Theorem \ref{th:MidLip2}\eqref{pt:2D} and Lemma \ref{lem:Ik_reorder} we have
\begin{align*}
\label{}
&\sum_{\substack{\bm'\in L_{h}(\Gamma)\\ \bm'\neq\bm}} \left|I_{\Gamma_{\bm},\Gamma_{\bm'}}[\Phi_*]-Q^h_{\Gamma_{\bm},\Gamma_{\bm'}}[\Phi_*] \right|\\
&\quad\lesssim \sum_{\substack{\bm'\in L_{h}(\Gamma)\\ \bm'\neq\bm}} k^2h^2\cH^d(\Gamma_{\bm})\cH^d(\Gamma_{\bm'})\left(1+\left|\log\left(kR_{\Gamma,\Hull}\rho^{\ell_*(\bm,\bm')-1}\right)\right|\right)\\
&\quad= k^2h^2\cH^d(\Gamma_{\bm})\cH^d(\Gamma)M^{-\ell}
\sum_{\ell_*=1}^{\ell-1}M^{\ell-\ell_*}(M-1)\left(1+\left|\log\left(kR_{\Gamma,\Hull}\rho^{\ell_*-1}\right)\right|\right)\\
&\quad 
\leq k^2h^2\cH^d(\Gamma_{\bm})\cH^d(\Gamma)(M-1)
\sum_{\ell_*=1}^{\ell-1}M^{-\ell_*}\left(1+\left|\log\left(kR_{\Gamma,\Hull}\right)\right| + (\ell_*-1)|\log\rho|\right)\\
&\quad 
\leq k^2h^2\cH^d(\Gamma_{\bm})\cH^d(\Gamma)
\left(1+\left|\log\left(kR_{\Gamma,\Hull}\right)\right| + \frac{|\log\rho|}{M-1}\right)
,
\end{align*}
and then summing over $\bm$, and noting that $1\leq 1/d$ and that $\frac{|\log\rho|}{M-1}\leq\frac{|\log\rho|}{\log M}=\frac1d$ by \eqref{eq:d2} and the fact that $M-1>\log M$, gives
\[S_2\lesssim k^2h^2\big(\cH^d(\Gamma)\big)^2
\left(\frac{1}{d}+\left|\log\left(kR_{\Gamma,\Hull}\right)\right|\right), \qquad n=1.
\]

Combining the estimates for $S_1$ and $S_2$ then gives 
\begin{align*}
&\Big|I_{\Gamma,\Gamma}[\Phi_*]
-Q^h_{\Gamma,\Gamma}[\Phi_*]\Big| %
\\ %
&\qquad\lesssim 
k^2h^2\big(\cH^d(\Gamma)\big)^2
\begin{cases}
\dfrac{h^d(1+|\log{kh}|)}{\diam(\Gamma)^d} + \dfrac{1}{d}+\left|\log\left(kR_{\Gamma,\Hull}\right)\right| , & n=1,\\[3mm]
\dfrac{h^{d-1}}{\diam(\Gamma)^d} + \dfrac{\rho M}{R_{\Gamma,\Hull}(\rho M-1)}, & n=2.
\end{cases} 
\end{align*}
The quantity in braces here is $h$-dependent, but can be bounded uniformly in $h$ to give \eqref{eq:PhiStarEstimateHIndep}. 
For $n=1$ this is achieved by noting that $z^d|\log z|\leq 1/(\re d)$ for $z\in(0,1]$, which, together with $h\leq \diam(\Gamma)$, 
implies that 
\begin{align*}
\dfrac{h^d(1+|\log{kh}|)}{\diam(\Gamma)^d} 
&\le \Big(\frac{h}{\diam(\Gamma)}\Big)^d
\Big(1+\big|\log \big(k\diam\GG\big)\big|+\Big|\log \frac h{\diam\GG}\Big|\Big) \\&
\le 1+\big|\log \big(k\diam\GG\big)\big|+\frac1{\re d}\\
&\lesssim \frac{1}{d}+\big|\log \big(kR_{\Gamma,\Hull}\big)\big|,
\end{align*}
where the final inequality holds by \eqref{eq:log4} with $x_1=kR_{\Gamma,\Hull}$, $x_2=k\diam\GG$, and $x_3=\cosc$, again noting that $1\leq 1/d$ in this case.
For $n=2$, since $h\leq \diam(\Gamma)$ and $d>1$ we can bound
\begin{align*}
\label{}
\frac{h^{d-1}}{\diam(\Gamma)^d} = \left(\frac{h}{\diam(\Gamma)}\right)^{d-1}\frac{1}{\diam(\Gamma)}
\leq \frac{1}{\diam(\Gamma)}
\leq \frac{1}{R_{\Gamma,\Hull}}
\leq \dfrac{\rho M}{R_{\Gamma,\Hull}(\rho M-1)},
\end{align*}
and we can then obtain \eqref{eq:PhiStarEstimateHIndep} by recalling that $\rho^d=1/M$. 
Finally, in converting the $O(h^2)$ estimate to one involving $N=|L_h(\Gamma)|$, we recall that 
$h\leq \diam(\Gamma)(N/M)^{-1/d}$ (see \eqref{eq:HBound}).
\end{proof}

\begin{rem}\label{rem:PhiStarBlowUp}
The error estimate of Proposition~\ref{prop:PhiStarDoubleSingular} for $n=2$ blows up as $\rho\searrow\frac1M$, equivalently as $d\searrow1$, assuming $\cH^d(\Gamma)$, $\diam\GG$, $M$ and $N$ are fixed and $R_{\Gamma,\Hull}$ is bounded away from zero, because of the factor $\frac{\rho M}{\rho M-1}=\frac{1}{1-M^{1/d-1}}$.
For example, for a family of Cantor dusts as in Figure~\ref{fig:disjointness}(I), parametrised by $\rho$ and with $M=4$, this corresponds to the limit $\rho\searrow\frac14$.
Differently from the setting in Subsection \ref{s:ErrVsCostSing}, the relative error is also predicted to blow up in this limit, because the integral being approximated in this case is bounded, since $\Phi_*\in L^\infty(\Gamma\times\Gamma)$.
By contrast, for $n=1$ the estimate in Proposition~\ref{prop:PhiStarDoubleSingular} tends to zero as $d\searrow0$, because for $N>M$ the algebraic growth of the $1/d$ term in $c$ is beaten by the exponential decay of the $(N/M)^{-2/d}$ factor.
\end{rem}

Finally, we can state and prove our main result for the approximation of $I_{\Gamma,\Gamma}[\Phi]$. 

\begin{thm}
\label{thm:PhiDoubleSingular}
Let $\Gamma\subset\Rnotsn^n$ be as in Subsection \ref{sec:IFS}, with $d=\dimH(\Gamma)>n-1$. Suppose that $\Gamma$ is \red{homogeneous} in the sense of Subsection \ref{sec:FurtherAssumptions}, with contraction factor $\rho\in(0,1)$. Suppose also that $\Gamma$ is hull-disjoint in the sense of \eqref{eq:RHulldef}. 
Let $k>0$. For the approximation of the integral \eqref{eq:double_int_Phi_singular} define the quadrature rule $Q^h_{\Gamma,\Gamma,\Phi}$ by \eqref{eq:reduced} (with $h\leq \diam(\Gamma)$) if $k\diam(\Gamma)\leq \cosc$ and by \eqref{eq:PhiQuad} (with $h\leq \cosc/k$) if $k\diam(\Gamma)>\cosc$. 
Then 
\begin{align*}
\left|I_{\Gamma,\Gamma}[\Phi]-Q^h_{\Gamma,\Gamma,\Phi}\right| 
&\lesssim c' h^2\big(\cH^d(\Gamma)\big)^2
\le c'\big(\diam\GG\cH^d(\Gamma)\big)^2 \left(\frac{N}{M}\right)^{-2/d},
\end{align*}
where the constant implied in $\lesssim$ depends only on $\cosc$, and 
\begin{align*}%
c'=\left\{
\begin{array}{l}
\dfrac{\left(\frac{\rho M}{\rho M-1}\right)^{n-1}}{\rho^{n+1}R_{\Gamma,\Hull}^{n+1}}
=\dfrac{M^{(n+1)/d}}{(1-M^{1/d-1})^{n-1}R_{\Gamma,\Hull}^{n+1}}
,\qquad\hfill k\diam(\Gamma)\leq \cosc,\\
\bigg(1+\dfrac{\left(\frac{\rho M}{\rho M-1}\right)^{n-1}}{\rho^{n+1}\big(k\diam\GG\big)^d}\bigg)
\bigg(\dfrac{k\diam(\Gamma)}{R_{\Gamma,\Hull}}\bigg)^{n+1} \\
=\bigg(1+\dfrac{M^{(n+1)/d}}{(1-M^{1/d-1})^{n-1}\big(k\diam\GG\big)^d}\bigg)
\bigg(\dfrac{k\diam(\Gamma)}{R_{\Gamma,\Hull}}\bigg)^{n+1}, \hfill \\
\hfill k\diam(\Gamma)> \cosc.
\end{array}\right.
\end{align*}
\end{thm}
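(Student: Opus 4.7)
The proof splits according to the two branches in the definition of $Q^h_{\Gamma,\Gamma,\Phi}$. In both cases the strategy is to exploit the singularity-subtraction decomposition $\Phi = C_n \Phi_{n-1} + \Phi_*$ to reduce the problem to results already available: Corollary \ref{cor:Phi_t} with $t=n-1$ for the singular $\Phi_{n-1}$ part, Proposition \ref{prop:PhiStarDoubleSingular} for the Lipschitz remainder $\Phi_*$, and Proposition \ref{prop:PhiDoubleRegular} for non-singular pairs of sub-components.

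\textbf{Case 1 ($k\diam\Gamma \le \cosc$).} Apply the triangle inequality to \eqref{eq:reduced}, bounding
\[
|I_{\Gamma,\Gamma}[\Phi] - Q^h_{\Gamma,\Gamma,\Phi}| \le |C_n|\,\bigl|I_{\Gamma,\Gamma}[\Phi_{n-1}] - Q^h_{\Gamma,\Gamma,n-1}\bigr| + \bigl|I_{\Gamma,\Gamma}[\Phi_*] - Q^h_{\Gamma,\Gamma}[\Phi_*]\bigr|.
\]
For the first summand, Corollary \ref{cor:Phi_t} applied with $t=n-1$ has a prefactor $(1-\sum_m\rho^{2d-(n-1)})^{-1}$ which, using $M\rho^d=1$, equals $M/(M-1)\le 2$ for $n=1$ and $\rho M/(\rho M-1)$ for $n=2$ (finite precisely because $d>n-1$). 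Bounding $R_{\Gamma,\Hull,h}\ge R_{\Gamma,\Hull}$ and inserting $\rho^{-(n+1)}\ge 1$ yields the claimed $c'h^2(\cH^d\Gamma)^2$. For the second summand, Proposition \ref{prop:PhiStarDoubleSingular} gives a bound of the form $ck^2h^2(\cH^d\Gamma)^2$; I use $kR_{\Gamma,\Hull}\le k\diam\Gamma\le\cosc$ together with boundedness of $r\mapsto r^2|\log r|$ on $(0,\cosc]$ (for $n=1$) and of $r\mapsto r$ on $(0,\cosc]$ (for $n=2$) to absorb the $k^2$ into a constant times $R_{\Gamma,\Hull}^{-2}$. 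The factor $1/d$ present for $n=1$ is controlled by $1/\rho^2$, since $\rho = M^{-1/d}$ implies $\rho^2/d$ is uniformly bounded, so $1/d \lesssim 1/\rho^2$. This yields the same $c'h^2(\cH^d\Gamma)^2$ bound.

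\textbf{Case 2 ($k\diam\Gamma>\cosc$).} Using \eqref{eq:PhiQuad} and the triangle inequality, decompose the error as
\begin{align*}
&\sum_{\bm \in L_{h_*}(\Gamma)}\!\bigl|I_{\Gamma_{\bm},\Gamma_{\bm}}[\Phi] - C_nQ^h_{\Gamma_{\bm},\Gamma_{\bm},n-1} - Q^h_{\Gamma_{\bm},\Gamma_{\bm}}[\Phi_*]\bigr|\\
&\qquad + \sum_{\bm\ne\bm'\in L_{h_*}(\Gamma)}\!\bigl|I_{\Gamma_{\bm},\Gamma_{\bm'}}[\Phi] - Q^h_{\Gamma_{\bm},\Gamma_{\bm'}}[\Phi]\bigr|.
\end{align*}
For each diagonal term, since $k\diam(\Gamma_{\bm})\le k h_*=\cosc$, Case 1 applied to $\Gamma_{\bm}$ (a similar uniform IFS attractor) gives a bound with $R_{\Gamma_{\bm},\Hull}=\rho^\ell R_{\Gamma,\Hull}$ and $\cH^d(\Gamma_{\bm})=\rho^{d\ell}\cH^d\Gamma$, where $\ell$ is the common length of multi-indices in $L_{h_*}(\Gamma)=I_\ell$. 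Summing over the $|L_{h_*}|=M^\ell=\rho^{-d\ell}$ such $\bm$ and using $\rho^\ell\sim h_*/\diam\Gamma=\cosc/(k\diam\Gamma)$ collapses the scaling factors into the second summand of $c'$. For the off-diagonal terms, apply Proposition \ref{prop:PhiDoubleRegular} with $\delta_{\bm,\bm'}=\dist(\UnionHullbm,\UnionHullbmp)\ge R_{\Gamma,\Hull}\rho^{\ell_*(\bm,\bm')-1}$ (from hull-disjointness and \eqref{eq:rprod_ineq}). Grouping pairs by $\ell_*(\bm,\bm')=\nu$ via Lemma \ref{lem:Ik_reorder}, which contributes $M^\ell\cdot M^{\ell-\nu}(M-1)$ pairs, reduces the double sum to geometric series in $\nu$ (with ratios $\rho^{d-n-1}$ and $\rho^{d-n/2-1}$ arising from the $1$ and $(k\delta)^{n/2+1}$ pieces of Proposition \ref{prop:PhiDoubleRegular}), which sum explicitly to yield the $(1 + \text{stuff})(k\diam\Gamma/R_{\Gamma,\Hull})^{n+1}$ structure. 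Finally, converting the $h^2$ bound to the $(N/M)^{-2/d}$ bound uses \eqref{eq:HBound} applied inside each $\Gamma_{\bm}$.

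\textbf{Main obstacle.} The hardest step is the off-diagonal summation in Case 2: two different geometric series arise (with bases $\rho^{d-n-1}$ from the ``$1$'' term and $\rho^{d-n/2-1}$ from the oscillatory $(k\delta)^{n/2+1}$ term in Proposition \ref{prop:PhiDoubleRegular}), their ratios exceed $1$ for the relevant range of $d$, and the summation window's endpoint $\ell$ depends on $k$ through $\rho^\ell\sim\cosc/(k\diam\Gamma)$. I will need to carefully track how these endpoint contributions — which are the dominant terms — collapse into the two pieces of $c'$, and in particular verify that the leading $k$-dependence across all contributions matches the claimed $(k\diam\Gamma/R_{\Gamma,\Hull})^{n+1}$ factor. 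Handling the $n=1$ case, where $\Phi_*$ carries a logarithmic derivative singularity, will require the bound \eqref{eq:log4} to convert $|\log(kR_{\Gamma,\Hull}\rho^{\ell_*-1})|$ into a sum of a bounded quantity and $(\ell_*-1)|\log\rho|$ as in the proof of Proposition \ref{prop:PhiStarDoubleSingular}.
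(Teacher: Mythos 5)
Your decomposition is sound and, for the bulk of the argument, coincides with the paper's proof: the paper also treats \eqref{eq:reduced} as the special case $h_*=\diam(\Gamma)$ of \eqref{eq:PhiQuad}, splits the error into diagonal blocks (handled by Corollary~\ref{cor:Phi_t} with $t=n-1$ and Proposition~\ref{prop:PhiStarDoubleSingular}, rescaled to each $\Gamma_{\bm}$ via $R_{\Gamma_{\bm},\Hull}\approx (h_*/\diam\Gamma)R_{\Gamma,\Hull}$ and $\cH^d(\Gamma_{\bm})=(h_*/\diam\Gamma)^d\cH^d(\Gamma)$, cf.~\eqref{eq:RGmRG}) and off-diagonal blocks (Proposition~\ref{prop:PhiDoubleRegular}), and your bookkeeping for the prefactors ($M/(M-1)$, $\rho M/(\rho M-1)$, $1/d\lesssim \rho^{-2}$, absorbing $k^2|\log(kR_{\Gamma,\Hull})|$ into $R_{\Gamma,\Hull}^{-2}$ via boundedness of $r^2|\log r|$, matching the paper's \eqref{eq:LogEpsBound}) is correct. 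The only genuine divergence is the step you flag as the main obstacle: the off-diagonal sum. The paper does not perform any level-by-level summation there. Since the bound $(1+(kz)^{n/2+1})/z^{n+1}$ from Proposition~\ref{prop:PhiDoubleRegular} is decreasing in $z$, it simply inserts the \emph{uniform} minimum separation $\dist(\Hull(\Gamma_{\bm}),\Hull(\Gamma_{\bm'}))\geq \rho^{\ell(h_*)-1}R_{\Gamma,\Hull}> (h_*/\diam\Gamma)R_{\Gamma,\Hull}$ for every off-diagonal pair, notes that $k\,(h_*/\diam\Gamma)R_{\Gamma,\Hull}\leq kh_*= \cosc$ so the oscillatory numerator is an $\cosc$-dependent constant, and then uses $\sum_{\bm}\sum_{\bm'}\cH^d(\Gamma_{\bm})\cH^d(\Gamma_{\bm'})\leq(\cH^d(\Gamma))^2$; this immediately gives $T_3\lesssim h^2(\cH^d(\Gamma))^2\,(k\diam\Gamma/(\cosc R_{\Gamma,\Hull}))^{n+1}$ with no use of Lemma~\ref{lem:Ik_reorder} or \eqref{eq:rprod_ineq} at this stage (the refined $\ell_*$-estimate \eqref{eq:rprod_ineq} is needed only inside Proposition~\ref{prop:PhiStarDoubleSingular}, i.e.\ in your diagonal terms). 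Your geometric-series route can be pushed through — the two series are last-/first-term dominated, and even the borderline case $d=n/2$ (possible only for $n=1$) produces at worst a $\log(k\diam\Gamma)$ factor that is absorbed by the slack against $(k\diam\Gamma/R_{\Gamma,\Hull})^{n+1}$ — but it buys nothing here: the crude uniform-separation bound already yields exactly the claimed $k$-dependence, so the "main obstacle" you anticipate dissolves. (Two trivial corrections: the conversion to $(N/M)^{-2/d}$ uses \eqref{eq:HBound} for $\Gamma$ itself, with $N=|L_h(\Gamma)|$, not inside each $\Gamma_{\bm}$; and in Case 2 your diagonal-block argument should make explicit, as the paper does for $n=1$, that the logarithm $|\log(kR_{\Gamma_{\bm},\Hull})|$ is controlled using \eqref{eq:log4} with endpoints $\rho k h_*R_{\Gamma,\Hull}/\diam\Gamma$ and $\cosc$ before being absorbed into the final constant.)
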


\begin{proof}
By redefining $h_*:=\min\{\cosc/k,\diam(\Gamma)\}$, \eqref{eq:reduced} can be viewed as a special case of \eqref{eq:PhiQuad}, since with $h_*=\diam(\Gamma)$ we have $L_{h_*}(\Gamma)=\{0\}$, in which case the first sum in \eqref{eq:PhiQuad} reduces to \eqref{eq:reduced} and the second sum is absent. We therefore present the proof of \eqref{eq:PhiQuad} and specialise to \eqref{eq:reduced} at the end. 

By the triangle inequality and the splitting $\Phi=C_n\Phi_{n-1}+\Phi_*$, we have:
\begin{align*}
\left|I_{\Gamma,\Gamma}[\Phi]-
Q^h_{\Gamma,\Gamma,\Phi}
\right| 
\!\leq &
\!\underbrace{\sum_{\bm\in L_{h_*}(\Gamma)}\!\! C_n\left|I_{\Gamma_{\bm},\Gamma_{\bm}}[\Phi_{n-1}] -
Q^h_{\Gamma_{\bm},\Gamma_{\bm},n-1}
\right|}_{=:T_1}
\\&+\! \underbrace{\sum_{\bm\in L_{h_*}(\Gamma)}\!\left|I_{\Gamma_{\bm},\Gamma_{\bm}}[\Phi_*]-Q^h_{\Gamma_{\bm},\Gamma_{\bm}}[\Phi_*] \right|}_{=:T_2}\\
& + \underbrace{\sum_{\bm\in L_{h_*}(\Gamma)}\sum_{\substack{\bm'\in L_{h_*}(\Gamma)\\ \bm'\neq\bm}} \left|I_{\Gamma_{\bm},\Gamma_{\bm'}}[\Phi]-Q^h_{\Gamma_{\bm},\Gamma_{\bm'}}[\Phi] \right|}_{=:T_3}.
\end{align*}

To bound $T_1$ above, we first note that by Corollary \ref{cor:Phi_t} (with $\Gamma$ replaced by $\Gamma_{\bm}$ and with $t=n-1$ for $n=1,2$) for any $\bm\in L_{h_*}(\Gamma)$ we have
\begin{align*}\label{}
\left|I_{\Gamma_{\bm},\Gamma_{\bm}}[\Phi_{n-1}] - 
Q^h_{\Gamma_{\bm},\Gamma_{\bm},n-1}\right| 
&\lesssim 
h^2\big(\cH^d(\Gamma_{\bm})\big)^2\left(1-M\rho^{2d+1-n}\right)^{-1}\dfrac{1}{R_{\Gamma_{\bm},\Hull}^{n+1}}\\
&=h^2\big(\cH^d(\Gamma_{\bm})\big)^2\left(1-M^{-1}\rho^{1-n}%
\right)^{-1}\dfrac{1}{R_{\Gamma_{\bm},\Hull}^{n+1}}.
\end{align*}
Let $\ell(h_*)$ be given by \eqref{eq:rho_ell} with $h$ replaced by $h_*$. Noting that $\rho^{\ell(h_*)}>\rho h_*/\diam(\Gamma)$ we have that
\begin{equation}\label{eq:RGmRG}
\dfrac{\rho h_*}{\diam(\Gamma)}R_{\Gamma,\Hull}<
\rho^{\ell(h_*)}R_{\Gamma,\Hull}=R_{\Gamma_{\bm},\Hull}\le\dfrac{h_*}{\diam(\Gamma)}R_{\Gamma,\Hull}.
\end{equation}
Hence, using {\eqref{eq:index} with $\cI=L_{h_*}(\Gamma)$ and \eqref{eq:HdGammam_Est} with $h$ replaced by $h_*$,}
\[ 
T_1 \lesssim 
h^2\big(\cH^d(\Gamma)\big)^2\left(\frac{h_*}{\diam(\Gamma)}\right)^d
\frac{M}{M-\rho^{1-n}}
\left(\dfrac{\diam(\Gamma)}{\rho h_*R_{\Gamma,\Hull}}\right)^{n+1}.
\]

For $T_2$, by Proposition \ref{prop:PhiStarDoubleSingular} we have for any $\bm\in L_{h_*}(\Gamma)$ that
\begin{align*}
&\left|I_{\Gamma_{\bm},\Gamma_{\bm}}[\Phi_*]-Q^h_{\Gamma_{\bm},\Gamma_{\bm}}[\Phi_*] \right| 
\\&\qquad 
\lesssim 
k^2h^2\big(\cH^d(\Gamma_{\bm})\big)^2
\begin{cases}
\frac{|\log\rho|}{\log M} +\Big|\log\big(kR_{\Gamma_{\bm},\Hull}\big)\Big|, & n=1,\\[3mm]
\dfrac{\rho M}{R_{\Gamma_{\bm},\Hull}(\rho M -1)}, & n=2.
\end{cases} 
\end{align*}
Using \eqref{eq:RGmRG}, 
and for $n=1$ the facts that $\frac{|\log\rho|}{\log M}\lesssim|\log\rho|\le\frac1\rho$, and that $kR_{\Gamma_{\bm},\Hull}\leq k\diam\Gamma_{\bm}\leq k h_*\leq \cosc$, so that we can apply \eqref{eq:log4} with $x_1=\rho kh_* R_{\Gamma,\Hull}/\diam(\Gamma_{\bm})$, $x_2=kR_{\Gamma_{\bm},\Hull}$ and $x_3=\cosc$, to find that
\[ 
T_2\lesssim 
k^2h^2\big(\cH^d(\Gamma)\big)^2\left(\frac{h_*}{\diam(\Gamma)}\right)^d
\begin{cases}
\dfrac1{\rho}
+\left|\log\left(\dfrac{\rho kh_*R_{\Gamma,\Hull}}{\diam(\Gamma)}\right)\right|, & n=1,\\[3mm]
\dfrac{M\diam(\Gamma)}{h_*R_{\Gamma_{\bm},\Hull}(\rho M -1)},
& n=2.
\end{cases} 
\]

For $T_3$, by Proposition \ref{prop:PhiDoubleRegular} we have for $\bm,\bm'\in L_{h_*}(\Gamma)$ with $\bm\neq\bm'$ that
\begin{align*}
&\left|I_{\Gamma_{\bm},\Gamma_{\bm'}}[\Phi]
-Q^h_{\Gamma_{\bm},\Gamma_{\bm'}}[\Phi]\right| \\
&\qquad\lesssim h^2\cH^d(\Gamma_{\bm})\cH^{d}(\Gamma_{\bm'})
\dfrac{\Big(1+\big(k\dist(\UnionHullbm,\UnionHullbmp)\big)^{n/2+1}\Big)}
{\dist\big(\UnionHullbm,\UnionHullbmp\big)^{n+1}},
\end{align*}
and by \eqref{eq:rprod_ineq} and \eqref{eq:RGmRG} %
it holds that
\begin{align*}\dist\big(\UnionHullbm,\UnionHullbmp\big)
&\geq \dist\big(\Hull(\Gamma_{\bm}),\Hull(\Gamma_{\bm'})\big)\\
&\geq \rho^{\ell(h_*)-1}R_{\Gamma,\Hull}\\
&>\frac{h_*}{\diam(\Gamma)}R_{\Gamma,\Hull},\end{align*}
so that, 
since $(1+(kz)^{n/2+1})/z^{n+1}$ is positive and decreasing on $(0,\infty)$, and  
$\dfrac{kh_*R_{\Gamma,\Hull}}{\diam(\Gamma)}\leq \cosc$,
 \[ T_3 \lesssim 
 h^2\big(\cH^d(\Gamma)\big)^2
{ \dfrac{\left(1+\left(\dfrac{kh_*R_{\Gamma,\Hull}}{\diam(\Gamma)}\right)^{n/2+1}\right)}
 {\left(\dfrac{h_*R_{\Gamma,\Hull}}{\diam(\Gamma)}\right)^{n+1}}
 }
 \lesssim 
 h^2\big(\cH^d(\Gamma)\big)^2
{ \left(\dfrac{\diam(\Gamma)}{h_*R_{\Gamma,\Hull}}\right)^{n+1}}.
  \]
  
Recalling our redefinition of $h_*:=\min\{\cosc/k,\diam(\Gamma)\}$, the result for the case $k\diam(\Gamma)>\cosc$ then follows by combining the estimates for $T_1$, $T_2$ and $T_3$, and the result for the case $k\diam(\Gamma)\leq\cosc$ follows by noting that in that case $T_3$ is absent. 
We describe in more detail the four possible cases.
\begin{itemize}
\item %
For $k\diam(\Gamma)\le\cosc$ (and thus $h_*=\diam\GG$) and $n=1$ we have, since $M/(M-1)\leq 2$,
\begin{align*}
&\frac{T_1+T_2}{h^2\big(\cH^d(\Gamma)\big)^2}\\
&\lesssim
\!\Big(\frac{h_*}{\diam(\Gamma)}\Big)^d
\bigg(
\left(\dfrac{\diam(\Gamma)}{\rho h_*R_{\Gamma,\Hull}}\right)^2
\!\!+\dfrac{k^2}{\rho}+k^2\left|\log\left(\dfrac{\rho kh_*R_{\Gamma,\Hull}}{\diam(\Gamma)}\right)\right|
\!\bigg)
\\
&= \dfrac{1}{\rho^2 R_{\Gamma,\Hull}^2}+\dfrac{k^2}{\rho}
+k^2\big|\log(\rho k R_{\Gamma,\Hull})\big|
\\
&\lesssim \dfrac{1}{\rho^2 R_{\Gamma,\Hull}^2},
\end{align*}
where the final bound follows 
from the fact that $k\le\frac\cosc{\diam\GG}\lesssim\frac1{R_{\Gamma,\Hull}}$, and because if $\rho k R_{\Gamma,\Hull}\leq 1$ then $k^2|\log(\rho k R_{\Gamma,\Hull})|\leq (\rho  R_{\Gamma,\Hull})^{-2}$, since
\begin{align}
\label{eq:LogEpsBound}
|\log\epsilon|<|\epsilon^{-2}|, \qquad 0<\epsilon\leq 1,
\end{align}
and if $\rho k R_{\Gamma,\Hull}>1$ then $k^2|\log(\rho k R_{\Gamma,\Hull})|\leq k^2 |\log(k \diam(\Gamma))| \leq k^2 |\log(\cosc)|\lesssim k^2 \lesssim (\rho  R_{\Gamma,\Hull})^{-2}$.

\item For $k\diam\GG\le\cosc$ and $n=2$ we have, again using that $k\lesssim 1/R_{\Gamma,\Hull}$,
\begin{align*}
&\frac{T_1+T_2}{h^2\big(\cH^d(\Gamma)\big)^2}\\
&\lesssim
\left(\frac{h_*}{\diam(\Gamma)}\right)^d
\bigg(\frac{M}{\rho^2(\rho M-1)}\left(\dfrac{\diam(\Gamma)}{h_*R_{\Gamma,\Hull}}\right)^3
+ \dfrac{k^2M\diam(\Gamma)}{h_*R_{\Gamma,\Hull}(\rho M-1)}\bigg)
\\
&\lesssim
\frac{M}{\rho^2(\rho M-1)R_{\Gamma,\Hull}^3}
+ \dfrac{k^2M}{R_{\Gamma,\Hull}(\rho M-1)}
\lesssim
\frac{M}{\rho^2(\rho M-1)R_{\Gamma,\Hull}^3}.
\end{align*}
\item For $k\diam(\Gamma)>\cosc$ (and thus $h_*=\cosc/k$) and $n=1$,
\begin{align*}
&\frac{T_1+T_2+T_3}{h^2\big(\cH^d(\Gamma)\big)^2}\\
&\lesssim
\left(\frac{h_*}{\diam(\Gamma)}\right)^d
\bigg[
\left(\dfrac{\diam(\Gamma)}{\rho h_*R_{\Gamma,\Hull}}\right)^2
+\dfrac{k^2}{\rho}+k^2\left|\log\left(\dfrac{\rho kh_*R_{\Gamma,\Hull}}{\diam(\Gamma)}\right)\right|\bigg]
\\&\quad+
\left(\dfrac{\diam(\Gamma)}{h_*R_{\Gamma,\Hull}}\right)^2
\\
&\lesssim
\frac1{\big(k\diam\GG\big)^d}
\bigg[\left(\dfrac{k\diam(\Gamma)}{\rho R_{\Gamma,\Hull}}\right)^2
+\dfrac{k^2}{\rho}
+k^2\left|\log\left(\dfrac{\rho R_{\Gamma,\Hull}}{\diam(\Gamma)}\right)\right|\bigg]
\\&\quad+\left(\dfrac{k\diam(\Gamma)}{R_{\Gamma,\Hull}}\right)^2
\\
&\lesssim
\bigg( 1+ \frac1{\rho^2\big(k\diam\GG\big)^d}\bigg)\left(\dfrac{k\diam(\Gamma)}{R_{\Gamma,\Hull}}\right)^2
,\end{align*}
using $R_{\Gamma,\Hull}<\diam\GG$ and \eqref{eq:LogEpsBound} with 
$\epsilon=\frac{\rho R_{\Gamma,\Hull}}{\diam\GG}<1$.
\item %
Finally, for $k\diam(\Gamma)>\cosc$ and
$n=2$, %
\begin{align*}
&\frac{T_1+T_2+T_3}{h^2\big(\cH^d(\Gamma)\big)^2}\\
&\lesssim
\left(\frac{h_*}{\diam(\Gamma)}\right)^d\bigg[
\frac{M}{\rho^2(\rho M-1)}
\left(\dfrac{\diam(\Gamma)}{h_*R_{\Gamma,\Hull}}\right)^3
+\dfrac{k^2M\diam(\Gamma)}{h_*R_{\Gamma,\Hull}(\rho M-1)}\bigg]
\\&\quad+\left(\dfrac{\diam(\Gamma)}{h_*R_{\Gamma,\Hull}}\right)^3
\\
&\lesssim
\frac1{\big(k\diam\GG\big)^d}\bigg[
\frac{M}{\rho^2(\rho M-1)}
\left(\dfrac{k\diam(\Gamma)}{R_{\Gamma,\Hull}}\right)^3
+ \dfrac{k^3M\diam(\Gamma)}{R_{\Gamma,\Hull}(\rho M-1)}\bigg]
\\&\quad+\left(\dfrac{k\diam(\Gamma)}{R_{\Gamma,\Hull}}\right)^3
\end{align*}
and we obtain the assertion again using the fact that $R_{\Gamma,\Hull}<\diam\GG$. 
\end{itemize}
\end{proof}

\begin{rem}[Number of function evaluations]
\label{rem:FunctionEvaluations}
For a \red{homogeneous} IFS, recall that $N=|L_h(\Gamma)|=|I_\ell|=M^\ell$ for $\ell$ as in \eqref{eq:rho_ell}.
A priori, the quadrature rule \eqref{eq:reduced} requires $(M-1)M^{2\ell-1}=\frac{M-1}{M}N^2$ evaluations of $\Phi_{n-1}$ (see Subsection \ref{s:ErrVsCostSing}) and $M^{2\ell}=N^2$ evaluations of $\Phi_*$. 
If $\ell_*$ is defined by \eqref{eq:rho_ell} with $ h$ replaced by $ h_*$, 
i.e.\ $\ell_*$ is the level of the partition whose elements have diameter approximately $h_*=\cosc/k$, 
then the quadrature rule \eqref{eq:PhiQuad} requires $M^{\ell_*}\frac{M-1}{M}M^{2(\ell-\ell_*)}=\frac{M-1}{M}M^{2\ell-\ell_*}=\frac{M-1}{M^{\ell_*+1}}N^{2}$ evaluations of $\Phi_{n-1}$, $M^{\ell_*}M^{2(\ell-\ell_*)}=M^{2\ell-\ell_*}=\frac{1}{M^{\ell_*}}N^2$ evaluations of $\Phi_*$, and $M^{\ell_*}(M^{\ell_*}-1)M^{2(\ell-\ell_*)}=(M^{\ell_*}-1)M^{2\ell-\ell_*}=(1-\frac{1}{M^{\ell_*}})N^{2}$ evaluations of $\Phi$. 
However, the number of function evaluations can be reduced (by a factor of a half in the limit $h\to 0$) by exploiting the symmetry of $\Phi$, $\Phi_*$, $\Phi_{n-1}$, all of which satisfy $f(x,y)=f(y,x)$. 
\end{rem}

\begin{rem}[Limit behaviour for $d\searrow n-1$]\label{rem:BlowUpFinal}
We consider the behaviour of the estimates of The\-o\-rem \ref{thm:PhiDoubleSingular} as $d\searrow n-1$, assuming $\cH^d(\Gamma)$, $\diam\GG$, $M$ and $N$ are fixed, and $R_{\Gamma,\Hull}$ is bounded away from zero.
This limit corresponds to $\rho\searrow0$ for $n=1$ and $\rho\searrow1/M$ for $n=2$.
For $n=1$ the absolute error tends to zero like $O((N/M^2)^{-2/d})$ as $d\searrow 0$, provided $N>M^2$.
Since in this limit the integral is dominated by the contribution from the singular function $\Phi_0$, the integral of which grows like $1/d$ as $d\searrow 0$ (as shown in Subsection \ref{s:ErrVsCostSing}), the relative error tends to zero like $O(d(N/M^2)^{-2/d})$ as $d\searrow 0$. 
For $n=2$ the estimate for the absolute error grows as $d\searrow 1$, being asymptotically proportional to $1/(1-M^{1/d-1})\sim 1/((d-1)\log M)$ as $d\searrow 1$. However, the relative error is bounded in this limit because, again, the contribution of the singular function ($\Phi_1$ in this case) grows in proportion to $1/(d-1)$ as $d\searrow 1$ (as shown in Subsection \ref{s:ErrVsCostSing}). 
We validate these statements numerically in Section \ref{sec:Numerics} below.
\end{rem}

\begin{rem}[Behaviour for vanishing distance between \red{subsets}]
\label{rem:RGammaFinal}
The estimates of The\-o\-rem~\ref{thm:PhiDoubleSingular} also blow up in the limit   
$R_{\Gamma,\Hull}\searrow0$, specifically like $R_{\Gamma,\Hull}^{-(n+1)}$. However, our numerical investigations in Section \ref{sec:Numerics} suggest that, at least in certain cases, this is overly pessimistic. 
\end{rem}

Proposition \ref{prop:PhiStarDoubleSingular} and Theorem \ref{thm:PhiDoubleSingular} are stated only for the case of a \red{homogeneous} IFS. We expect that with non-trivial further work one should be able to extend the $O(h^2)$ estimates in these results to the non-\red{homogeneous} case, but we defer this to future studies. The main difficulty is obtaining sharp estimates for the sum $S_2$ in the proof of Proposition \ref{prop:PhiStarDoubleSingular}.
While we cannot currently prove $O(h^2)$ estimates for the non-\red{homogeneous} case, we can at least prove weaker $O(h)$ estimates. 
The following is the $O(h)$ analogue of Theorem \ref{thm:PhiDoubleSingular}. The proof, which we do not provide here, essentially follows that of Theorem \ref{thm:PhiDoubleSingular}, but applies lower order estimates, and estimates the sum $S_2$ in the proof of Proposition \ref{prop:PhiStarDoubleSingular} more simply (but less sharply) using a uniform bound over all summands.

\begin{thm}\label{thm:PhiDoubleSingularNonUniform} Let $\Gamma\subset\Rnotsn^n$ satisfy the assumptions of Theorem \ref{thm:PhiDoubleSingular}, except that we no longer assume $\Gamma$ is \red{homogeneous} in the sense of Subsection \ref{sec:FurtherAssumptions}. Then 
\[	\left|I_{\Gamma,\Gamma}[\Phi]-Q^h_{\Gamma,\Gamma,\Phi}\right|
\lesssim c''h\big(\cH^d(\Gamma)\big)^2,
\]
where the constant implied by $\lesssim$ depends only on $\cosc$ and
\begin{align*}
c''=
\left\{\begin{array}{l}
\dfrac{1}{\rho_{\min}^{n}(1-\sum_{m=1}^M\rho_m^{2d-n+1})R_{\Gamma,\Hull}^{n}}
,\hfill \quad k\diam(\Gamma)\leq \cosc,
\\
\bigg(1+\dfrac{1}{\rho_{\min}^n\big(k\diam\GG\big)^d (1-\sum_{m=1}^M\rho_m^{2d-n+1})}\bigg)
\bigg(\dfrac{k\diam(\Gamma)}{R_{\Gamma,\Hull}}\bigg)^{n},\qquad\qquad\\
\hfill \quad k\diam(\Gamma)> \cosc,
\end{array}
\right.
\end{align*}
where $\rho_{\min}=\min_{m\in\{1,\ldots,M\}}\rho_m$.
\end{thm}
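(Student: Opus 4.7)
The plan is to mirror the proof of Theorem~\ref{thm:PhiDoubleSingular} step by step, reusing everything that does not explicitly invoke uniformity, and replacing the one place where uniformity is essential (namely the appeal to Proposition~\ref{prop:PhiStarDoubleSingular}) with a cruder Lipschitz-based bound. Specifically, set $h_*:=\min\{\cosc/k,\diam\GG\}$ and decompose the error, exactly as in that proof, as $T_1+T_2+T_3$, where $T_1$ collects the diagonal-block contributions from $C_n\Phi_{n-1}$, $T_2$ collects the diagonal-block contributions from $\Phi_*$, and $T_3$ collects the off-diagonal contributions from $\Phi$ at scale $h_*$.

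For $T_1$, I would apply Corollary~\ref{cor:Phi_t} (valid for non-uniform IFS) with $t=n-1$ on each diagonal block, which directly produces the factor $(1-\sum_m\rho_m^{2d-n+1})^{-1}$ appearing in $c''$. To control $R_{\Gamma_{\bm},\Hull}$ uniformly in $\bm\in L_{h_*}\GG$, I would use the scaling $R_{\Gamma_{\bm},\Hull}=(\prod_i\rho_{m_i})R_{\Gamma,\Hull}$ combined with the fact that $\bm\in L_{h_*}\GG$ forces $\prod_i\rho_{m_i}\ge\rho_{\min}h_*/\diam\GG$. Summing using \eqref{eq:HdGammam_Est} and $\sum_{\bm}\cH^d(\Gamma_{\bm})=\cH^d\GG$ yields an $O(h^2)$ contribution of the form claimed. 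For $T_3$, I would use Proposition~\ref{prop:PhiDoubleRegular} on each off-diagonal pair, bound the separation from below by $\rho_{\min}^{|\bm|-1}R_{\Gamma,\Hull}\gtrsim h_*R_{\Gamma,\Hull}\rho_{\min}/\diam\GG$, and sum crudely over pairs using $\sum_{\bm,\bm'}\cH^d(\Gamma_{\bm})\cH^d(\Gamma_{\bm'})\le\cH^d(\GG)^2$; this gives an $O(h^2)$ contribution as well, with the appropriate power of $R_{\Gamma,\Hull}$. In the regime $k\diam\GG>\cosc$ the factor $k\diam\GG$ enters these estimates in exactly the same way as in Theorem~\ref{thm:PhiDoubleSingular}, producing the second branch of $c''$.

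The decisive step, and the main obstacle, is $T_2$: Proposition~\ref{prop:PhiStarDoubleSingular} is not available because its estimate of the off-diagonal sum $S_2$ relied on the fact that $L_h\GG=I_\ell$ together with the counting in Lemma~\ref{lem:Ik_reorder}, which fails for non-uniform IFSs. In place of that, for each block I would simply apply Theorem~\ref{th:MidLip2}(i) on $\Gamma_{\bm}\times\Gamma_{\bm}$, using the fact that $\Phi_*$ is globally Lipschitz, with Lipschitz constant bounded (up to factors depending on $\cosc$) by a power of $k$, as follows from the derivative bounds in \eqref{eq:PhiStarEstimates1}. This loses one power of $h$ compared with the uniform case but requires no fine counting of multi-indices, and, after summing over $\bm\in L_{h_*}\GG$ and using $\sum_{\bm}\cH^d(\Gamma_{\bm})^2\le\cH^d(\GG)^2(h_*/\diam\GG)^d$, yields a contribution of order $h\cH^d(\GG)^2$ absorbed in the constant $c''$. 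Combining $T_1$, $T_2$ and $T_3$ and keeping only the leading $O(h)$ term from $T_2$ gives the stated bound; the heterogeneous factor $\rho_{\min}^{-n}$ in $c''$ originates from the scaling of $R_{\Gamma_{\bm},\Hull}$ down to the parent level, and reflects precisely the cost of abandoning the Lemma~\ref{lem:Ik_reorder} counting argument.
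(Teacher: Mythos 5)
The paper does not actually write out a proof of this theorem; it only sketches one, saying that it ``essentially follows that of Theorem \ref{thm:PhiDoubleSingular}, but applies lower order estimates, and estimates the sum $S_2$ \dots more simply \dots using a uniform bound over all summands''. Your overall architecture (the $T_1+T_2+T_3$ decomposition at scale $h_*$, the scaling $R_{\Gamma_{\bm},\Hull}=(\prod_i\rho_{m_i})R_{\Gamma,\Hull}$ with $\prod_i\rho_{m_i}\geq\rho_{\min}h_*/\diam\GG$ for $\bm\in L_{h_*}\GG$, and the replacement of Proposition \ref{prop:PhiStarDoubleSingular} by a cruder $O(h)$ bound) is exactly in this spirit, and your treatment of $T_2$ via Theorem \ref{th:MidLip2}(i) with the global Lipschitz bound on $\Phi_*$ from \eqref{eq:PhiStarEstimates1} is sound and does fit inside the claimed constant.

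The genuine gap is in $T_1$ and $T_3$: by retaining the \emph{second-order} estimates (Corollary \ref{cor:Phi_t} and Proposition \ref{prop:PhiDoubleRegular}) there, you produce terms of the form $h^2\big(\cH^d(\Gamma)\big)^2 R_{\Gamma,\Hull}^{-(n+1)}$ (resp.\ $h^2\big(\cH^d(\Gamma)\big)^2\big(k\diam\GG/R_{\Gamma,\Hull}\big)^{n+1}$ in the oscillatory regime), whereas the stated $c''$ contains only the powers $R_{\Gamma,\Hull}^{-n}$ and $\big(k\diam\GG/R_{\Gamma,\Hull}\big)^{n}$. Your bound is dominated by $c''h\big(\cH^d(\Gamma)\big)^2$ only when $h\lesssim R_{\Gamma,\Hull}/\rho_{\min}^{n}$ (resp.\ $h\lesssim R_{\Gamma,\Hull}/(k\diam\GG)$), which is not guaranteed; e.g.\ for fixed $h$ and $R_{\Gamma,\Hull}\to0$ your estimate blows up one power of $R_{\Gamma,\Hull}$ faster than the theorem allows, so the statement as written (with that $c''$) does not follow. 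This is precisely what the paper means by ``applies lower order estimates'': to obtain the stated constant one must also downgrade $T_1$ and $T_3$ to first-order barycentre-rule estimates based on gradient bounds — $|\nabla\Phi_{t}|\lesssim |x-y|^{-(t+1)}$ (so $t+1=n$ for $t=n-1$) and $|\nabla\Phi(x,y)|\lesssim(1+(k|x-y|)^{n/2})/|x-y|^{n}$ — which give $O(h)$ errors with separation appearing only to the power $n$, and hence $c''$ exactly as claimed. So either prove the theorem with your (different, incomparable) constant, or rerun your $T_1$, $T_3$ estimates with the first-order machinery; as it stands the claimed $R_{\Gamma,\Hull}$- and $k\diam\GG$-dependence is not established.
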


\section{Numerical experiments}
\label{sec:Numerics}

In this section we present numerical results complementing our theoretical analysis. 
The code used for our numerical experiments is available at \url{https://github.com/AndrewGibbs/IFSintegrals}, where we provide a Julia-based \cite{julia} implementation of all the quadrature rules presented in this paper. \red{Within this repository, the interactive notebook}  \verb|QuadratureExample.jpynb|
\red{provides an overview of the main steps in our algorithm and examples of usage.
The pseudocode for a simple recursive implementation of the quadrature rule $Q_\Gamma[f]$ \eqref{eq:BaryQuad} for regular single integrals is shown in Algorithms \ref{algo:Qhf}--\ref{algo:LeafCalculator} below.

Estimation of $\diam(\Gamma)$ is a key step in Algorithm \ref{algo:Qhf}. In the numerical experiments which follow, $\diam(\Gamma)$ can be derived analytically. But for more general cases where an analytic derivation is not possible, our implementation estimates $\diam(\Gamma)$ using an algorithm that follows from \cite[Proposition~6]{DuHa:94}.
}

\begin{algorithm2e}[tb!]
\fbox{\parbox{.89\textwidth}{\SetAlgoLined
{\bf Data:  
$(A_m,\rho_m,\delta_m, p_m)_{m=1,\ldots,M}, h, \mu\GG, f$} %
\\[2mm]
$x_\emptyset=x_\Gamma=(I-\sum_{m=1}^Mp_m\rho_mA_m)^{-1}\sum_{m=1}^Mp_m\delta_m$

\tcc*[f]{\eqref{eq:xGamma_formula}: compute barycentre of $\Gamma$ from IFS parameters}

$w_\emptyset=\mu\GG$

Estimate $D_\emptyset=\diam\GG$

Set $\mathcal L_h=\emptyset$

\tcc*[f]{List that will contain quadrature nodes and weights}

Set $M, h, (A_m,\rho_m,\delta_m, p_m)_{m=1,\ldots,M},\mathcal L_h$ as global parameters

$A_\emptyset=I,\quad\rho_\emptyset=1,\quad\delta_\emptyset=0$

\tcc*[f]{$s_\emptyset(x)=\rho_\emptyset A_\emptyset x+\delta_\emptyset$ is identity map}

LeafCalculator($x_\emptyset,w_\emptyset,D_\emptyset,A_\emptyset,\rho_\emptyset,\delta_\emptyset$)
 \tcc*[f]{Populate list $\mathcal L_h$}

$Q_\Gamma[f]=\sum_{(x_\bm,w_\bm)\in\mathcal L_h} w_\bm f(x_\bm)$     \tcc*[f]{Apply rule \eqref{eq:BaryQuad}}
}}
\medskip
\caption{\red{Quadrature $Q_\Gamma^h[f]$ for single regular integral $I_\Gamma[f]$. 
Quadrature nodes and weights are computed using the recursive routine LeafCalculator in Algorithm~\ref{algo:LeafCalculator}.}}
\label{algo:Qhf}
\end{algorithm2e}

\begin{algorithm2e}[tb!]
\fbox{\parbox{.89\textwidth}{
\SetAlgoLined
{\bf LeafCalculator}
\\[2mm]
{\bf Input:} $x_\bm, w_\bm, D_\bm, A_\bm,\rho_\bm,\delta_\bm$
\\
\eIf{ $D_\bm\le h$}{
    append  $(x_\bm, w_\bm)$ to $\mathcal L_h$ list
    \\
    $(x_*,w_*,D_*,\rho_*,A_*,\delta_*)$ = ($x_\bm, w_\bm, D_\bm, A_\bm,\rho_\bm,\delta_\bm$)
    }
    {
\For{$\widetilde m=1,\ldots,M$}{
$x_{(\bm,\widetilde m)}
=s_\bm s_{\widetilde m}s_\bm ^{-1}(x_\bm)\\
\rule{11mm}{0mm}=\rho_{\widetilde m} A_\bm A_{\widetilde m}A_\bm ^{-1} (x_\bm -\delta_\bm)
+ \delta_\bm  +\rho_\bm A_\bm \delta_{\widetilde m}$
\\
$w_{(\bm,\widetilde m)}= p_{\widetilde m} w_\bm$
\\
$D_{(\bm,\widetilde m)}=\rho_{\widetilde m}D_\bm$
\\
$\rho_{(\bm,\widetilde m)}= \rho_{\widetilde m}\rho_\bm$
\\
$A_{(\bm,\widetilde m)}=A_\bm A_{\widetilde m}$
\\
$\delta_{(\bm,\widetilde m)}=\rho_\bm A_\bm\delta_{\widetilde m}+\delta_\bm$
\\
$(x_*,w_*,D_*,\rho_*,A_*,\delta_*)=$ LeafCalculator$(x_{(\bm,\widetilde m)},w_{(\bm,\widetilde m)},$\\
\rule{45mm}{0mm}$D_{(\bm,\widetilde m)},\rho_{(\bm,\widetilde m)},A_{(\bm,\widetilde m)},\delta_{(\bm,\widetilde m)})$
}
}
{\bf Return:} $x_*,w_*,D_*,\rho_*,A_*,\delta_*$
}}
\medskip
\caption{\red{Recursive algorithm for the computation of barycentre $x_\bm$, measure $w_\bm=\mu(\Gamma_\bm)$, diameter $D_\bm$ of $\Gamma_\bm$, and the parameters of $s_\bm:\Gamma\to\Gamma_\bm$.
If $\bm\in L_h$ then $(x_\bm,w_\bm)$ is saved in $\mathcal L_h$.}}
\label{algo:LeafCalculator}
\end{algorithm2e}

We shall focus mainly on the validation of the quadrature rule $Q^h_{\Gamma,\Gamma,\Phi}$ defined by \eqref{eq:reduced} and \eqref{eq:PhiQuad} for the calculation of the singular double integral $I_{\Gamma,\Gamma}[\Phi]$ defined in \eqref{eq:double_int_Phi_singular}, since this is the most challenging integral we consider in the paper, and since it is important for the Hausdorff BEM application of \cite{HausdorffBEM}. However, our numerical results for $Q^h_{\Gamma,\Gamma,\Phi}$ also implicitly validate the quadrature rule $Q^h_{\Gamma,\Gamma,t}$ defined in \eqref{eq:non_overlap_quad_gen} for the integration of the singular function $\Phi_t$, and, more fundamentally, the barycentre rule of Definition \ref{def:double} for regular integrands. 

The definition of $Q^h_{\Gamma,\Gamma,\Phi}$ in Section \ref{sec:Phi} involves a parameter $\cosc>0$, which governs whether the integral $I_{\Gamma,\Gamma}[\Phi]$ is treated as non-oscillatory ($k\diam(\Gamma)\leq \cosc$), in which case \eqref{eq:reduced} is applied, or oscillatory ($k\diam(\Gamma)> \cosc$), in which case \eqref{eq:PhiQuad} is used. 
If $\cosc$ is too small, accuracy will deteriorate because the singularity will not be properly captured, while if $\cosc$ is too large, accuracy will also deteriorate because the splitting \eqref{eq:Splitting} is being used outside of its range of applicability. Our experience, following a detailed numerical investigation, suggests that a value of $\cosc=2\pi$ gives acceptable performance across all the examples we considered, and this is the value of $\cosc$ we use throughout this section. This means we classify the integral $I_{\Gamma,\Gamma}[\Phi]$ to be oscillatory (and use \eqref{eq:PhiQuad} rather than \eqref{eq:reduced}) whenever the diameter of $\Gamma$ is larger than one wavelength. 

\paragraph{Cantor sets.}
We first consider the calculation of $I_{\Gamma,\Gamma}[\Phi]$ in the case where $\Gamma\subset\Rnotsn$ is a Cantor set, defined by \eqref{eq:CantorDef}  for some $\rho\in(0,1/2)$, \red{and $\mu=\cH^d|_\Gamma$.} 
In this case $\Gamma$ is \red{homogeneous}, with $d=\log{2}/\log{(1/\rho)}$ and $\cH^d(\Gamma)=1$ (see e.g.\ \cite[p.~53]{Fal}), and hull-disjoint, with $R_{\Gamma,\Hull,h}=R_{\gamma,\Hull}=R_\Gamma=1-2\rho$, for $0<h<\diam(\Gamma)=1$.
In Figure \ref{fig:CantorSetRhoZero} we plot absolute and relative errors for the quadrature rule $Q^h_{\Gamma,\Gamma,\Phi}$ as a function of $N=|L_h(\Gamma)|=2^\ell$, for $\ell=2,\ldots,9$, $k=5$ and $\rho\in\{1/3, 0.1, 0.01, 0.001\}$. The reference solution $I^{\rm ref}_{\Gamma,\Gamma}[\Phi]$ in each case is computed using the quadrature rule $Q^h_{\Gamma,\Gamma,\Phi}$ with $N=8192$ ($\ell=13$). 
We also plot on the same axes the corresponding theoretical convergence rate $N^{-2/d}$ (which differs for each value of $\rho$) predicted by Theorem \ref{thm:PhiDoubleSingular}. For all choices of $\rho$ we see excellent agreement with the theory. Moreover, both the absolute and relative errors for a given $N$ clearly decrease as $\rho\searrow 0$ (equivalently, $d\searrow 0$), in line with the observations of Remark \ref{rem:BlowUpFinal}.

\begin{figure}[t!]
	\centering
		\subfigure[][
	{Absolute error}
	]
	{\includegraphics[width=.48\linewidth]{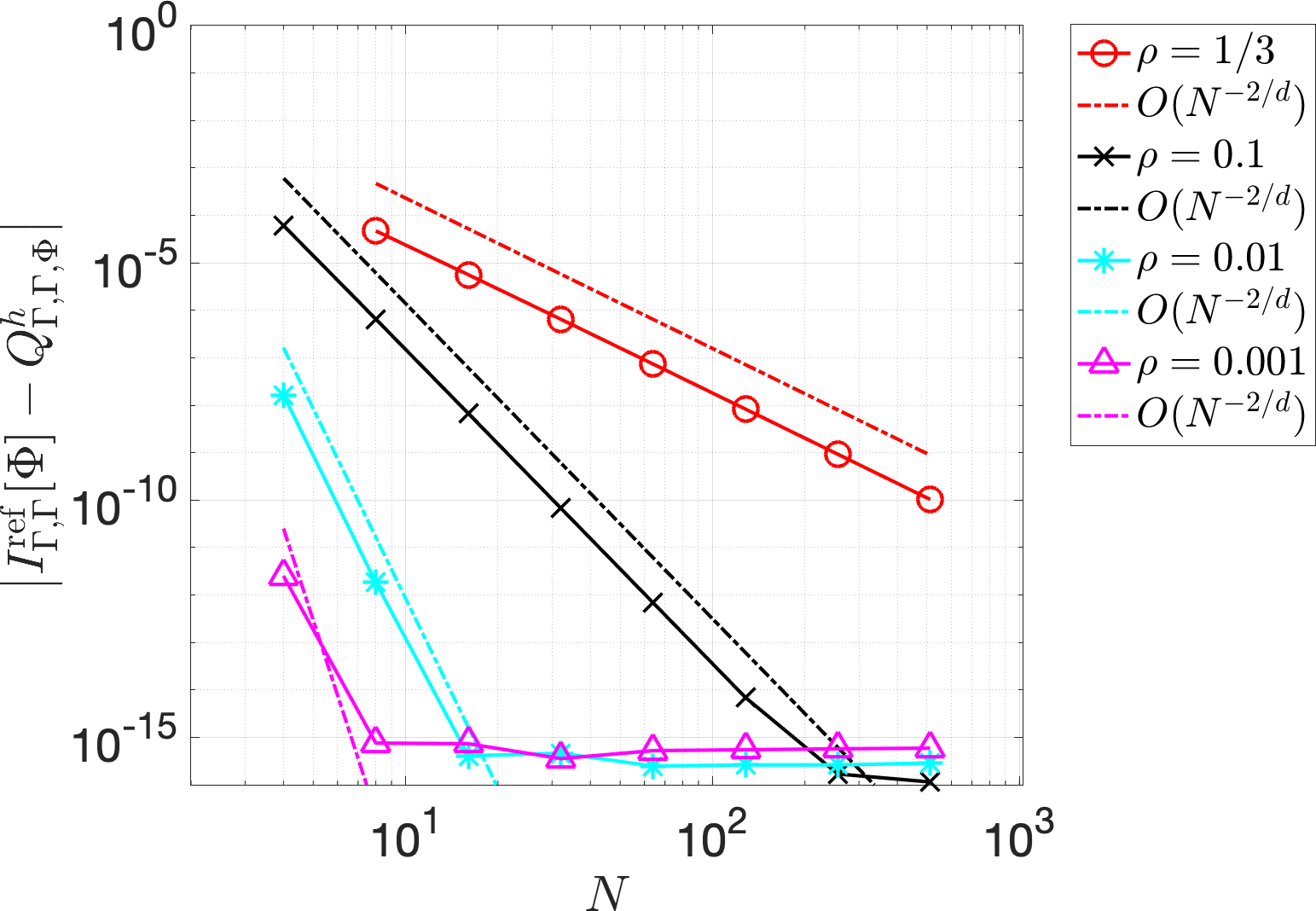}}
\hspace{2mm}
	\subfigure[][
	{Relative error}
	]
	{\includegraphics[width=.48\linewidth]{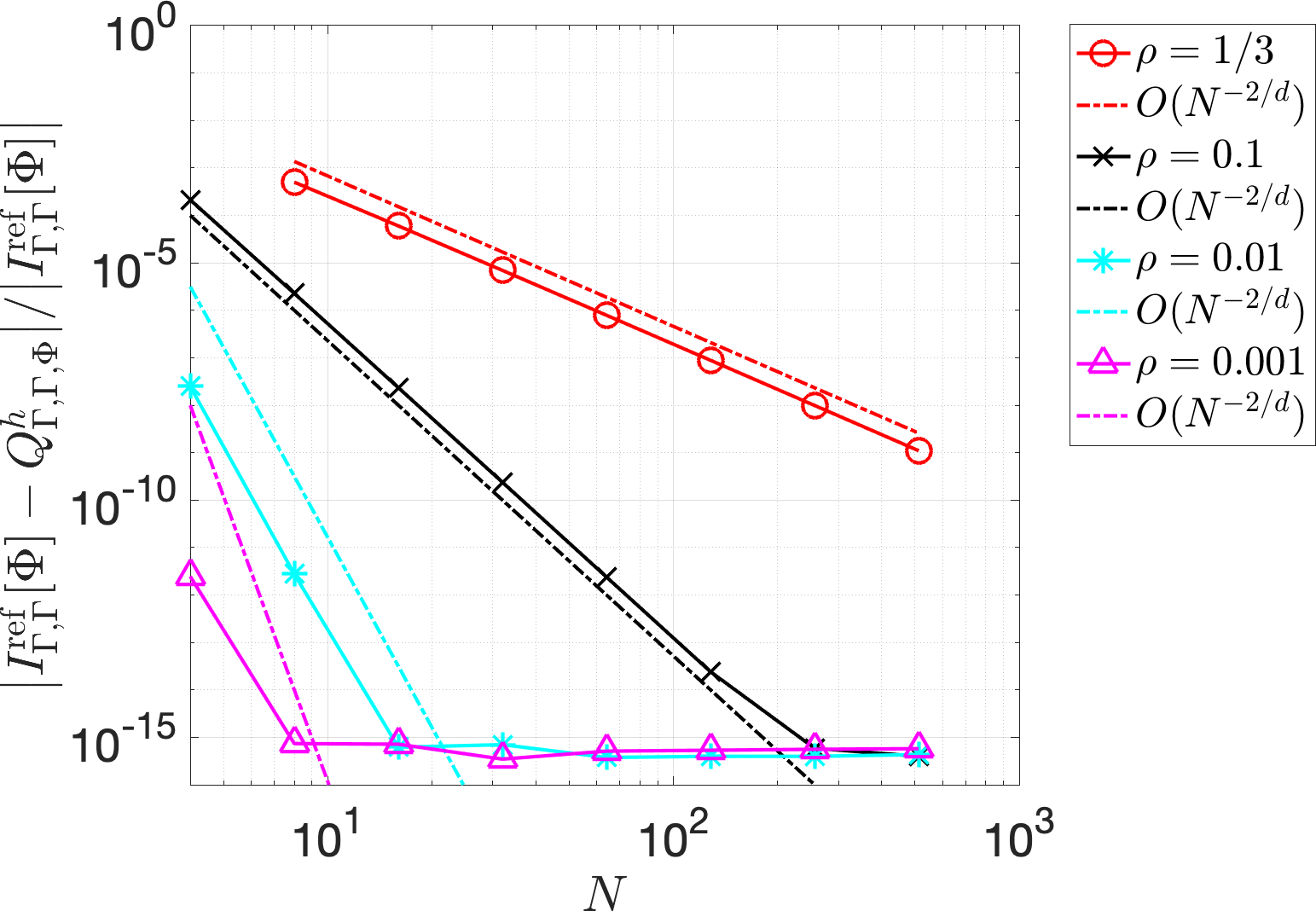}}
	\hspace{5mm}
	\caption{Convergence of $Q_{\Gamma,\Gamma,\Phi}^h$ %
	for a collection of Cantor sets with parameters $\rho$ approaching $0$.}
\label{fig:CantorSetRhoZero}
\end{figure}

\paragraph{Cantor dusts.}
Next we consider the case where $\Gamma$ is a Cantor dust, defined as in the first line of Table \ref{t:disjointness} with $\rho\in (0,1/2)$\red{, and $\mu=\cH^d|_\Gamma$}.
Again, $\Gamma$ is \red{homogeneous}, now with $d=\log4/\log(1/\rho)$, and hull-disjoint, with $R_{\Gamma,\Hull,h}=R_{\gamma,\Hull}=R_\Gamma=1-2\rho$, for $0<h<\diam(\Gamma)=\sqrt2$.
In this case the Hausdorff measure $\cH^d(\Gamma)$ is not known exactly, so the double integral $I_{\Gamma,\Gamma}[\Phi]$ can only be computed up to the unknown factor $\cH^d(\Gamma)^2$.
In Figure \ref{fig:CantorDustRhoQuarter} we present absolute (scaled by $\cH^d(\Gamma)^2$) and relative errors for $k=5$ and $\rho\in\{1/3,0.26,0.251,0.2501\}$, plotted against $N=4^\ell$, $\ell=3,4,5$, along with the corresponding theoretical convergence rate $N^{-2/d}$. 
The reference solution $I^{\rm ref}_{\Gamma,\Gamma}[\Phi]$ in each case is computed using the quadrature rule $Q^h_{\Gamma,\Gamma,\Phi}$ with $N=16384$ ($\ell=7$). 
The behaviour as $N\to\infty$ is clearly consistent with the theoretical convergence rates. Moreover, as $\rho\searrow 1/4$ (equivalently, as $d\searrow 1$), the absolute error grows, while the relative error remains bounded, as predicted in Remark \ref{rem:BlowUpFinal}.

\begin{figure}[t!]
	\centering
		\subfigure[][
	{Absolute error (scaled by $\cH^d(\Gamma)^2$)}
	]
	{\includegraphics[width=.48\linewidth]{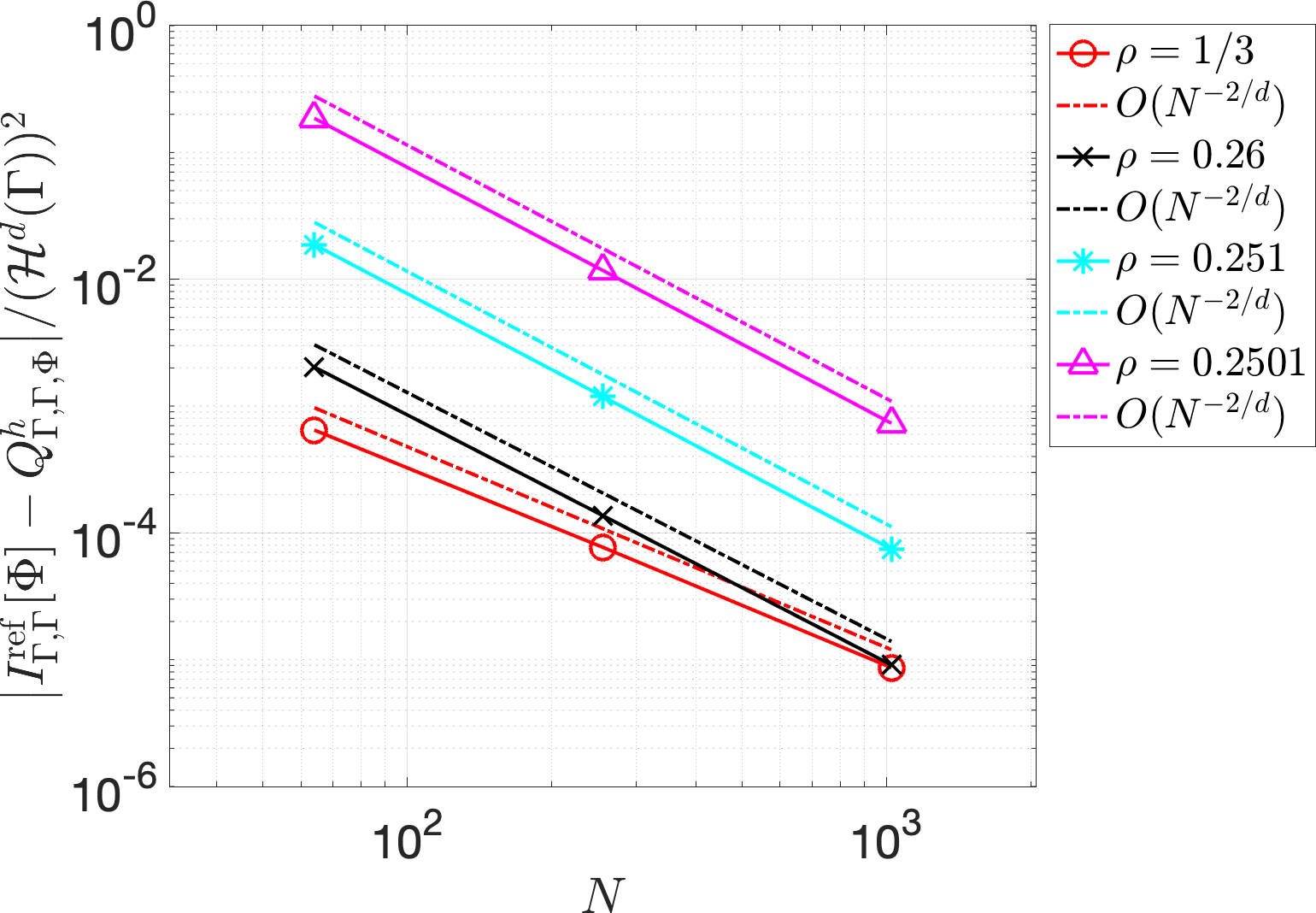}}
	\subfigure[][
	{Relative error}
	]
	{\includegraphics[width=.48\linewidth]{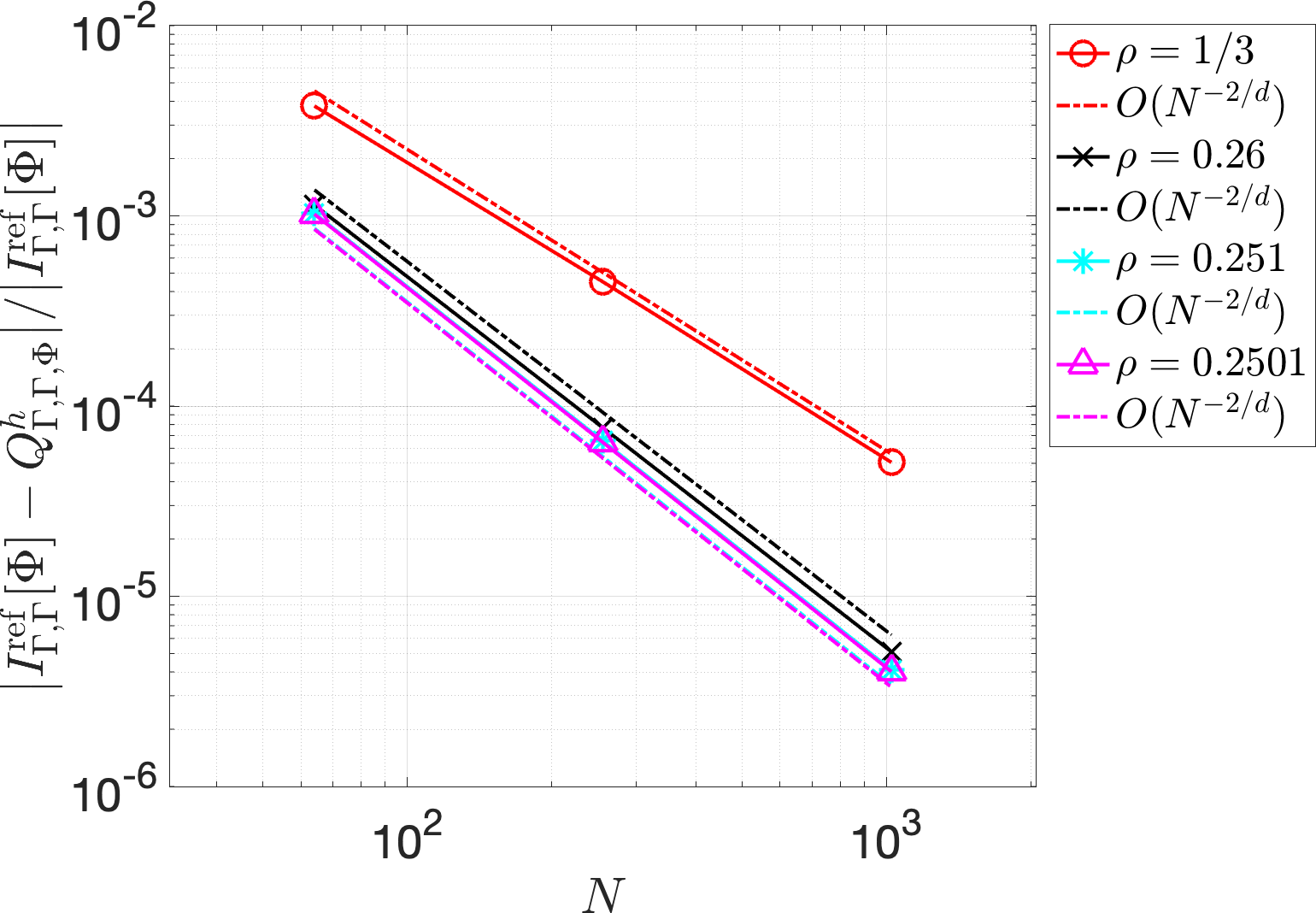}}
	\caption{Convergence of $Q_{\Gamma,\Gamma,\Phi}^h$ %
	for a collection of Cantor dusts with parameters $\rho$ approaching $1/4$.}
\label{fig:CantorDustRhoQuarter}
\end{figure}

\paragraph{Vanishing separation limit.}
Next we consider the behaviour of our quadrature rules as the parameter $R_{\Gamma,\Hull}$ tends to zero. 
In Figure \ref{fig:CantorSetGapShrink}(a) we show absolute errors for $Q_{\Gamma,\Gamma,\Phi}^h$ %
with with $k=5$ at three values of $N$, for $\Gamma\subset\Rnotsn$ a Cantor set, defined by \eqref{eq:CantorDef}, with $\rho=(1-R_\Gamma)/2$, where $R_{\Gamma}=1-2\rho\in\{0.1,0.01,0.001,0.0001,0.00001\}$\red{, and $\mu=\cH^d|_\Gamma$}. 
The reference solution is as for Figure \ref{fig:CantorSetRhoZero}. 
In Remark \ref{rem:RGammaFinal} we observed that as $R_{\Gamma,\Hull}\to 0$ our theory predicts blow-up of the error like $R_{\Gamma,\Hull}^{n+1}$, i.e.\ like $R_{\Gamma}^{2}$ in this case. However, the numerical results suggest that, at least in this case, the theoretical prediction is overly pessimistic, since the error appears to be bounded as $R_{\Gamma,\Hull}\to 0$. In fact, the integral for the non-disjoint case $\rho=1/2$ (so $\Gamma=[0,1]$, $d=1$ and $R_{\Gamma}=0$) can be computed using our method, and the corresponding errors appear to follow the same $N^{-2/d}$ behaviour (in this case, $N^{-2/d}=N^{-2}$ since $d=1$) with respect to increasing $N$ as for the case $0<\rho<1/2$ (see the dashed lines in the figure). To further investigate the non-disjoint case $\rho=1/2$ (with $\Gamma=[0,1]$, $d=1$ and $R_{\Gamma}=0$), in Figure \ref{fig:CantorSetGapShrink}(b) we plot the absolute error in the quadrature rule $Q_{\Gamma,\Gamma,0}^{h}\approx I_{\Gamma,\Gamma}[\Phi_0]$ for this case, for which we have the exact result
\begin{align}
\label{eq:ExactIntegralLog}
I_{\Gamma,\Gamma}[\Phi_0] 
= \int_0^1 \int_0^1 \log{|x-y|} \,\rd\cH^1(y)\,\rd\cH^1(y) 
= \int_0^1 \int_0^1 \log{|x-y|} \,\rd y\,\rd x
= -\frac{3}{2},
\end{align}
where we used the fact that $\cH^1$ coincides with the Lebesgue measure on $\Rnotsn$. 

\begin{figure}[t!]
\centering
\subfigure[][
{
$R_\Gamma=R_{\Gamma,\Hull}\to 0$}
]
{\includegraphics[width=.45\linewidth]{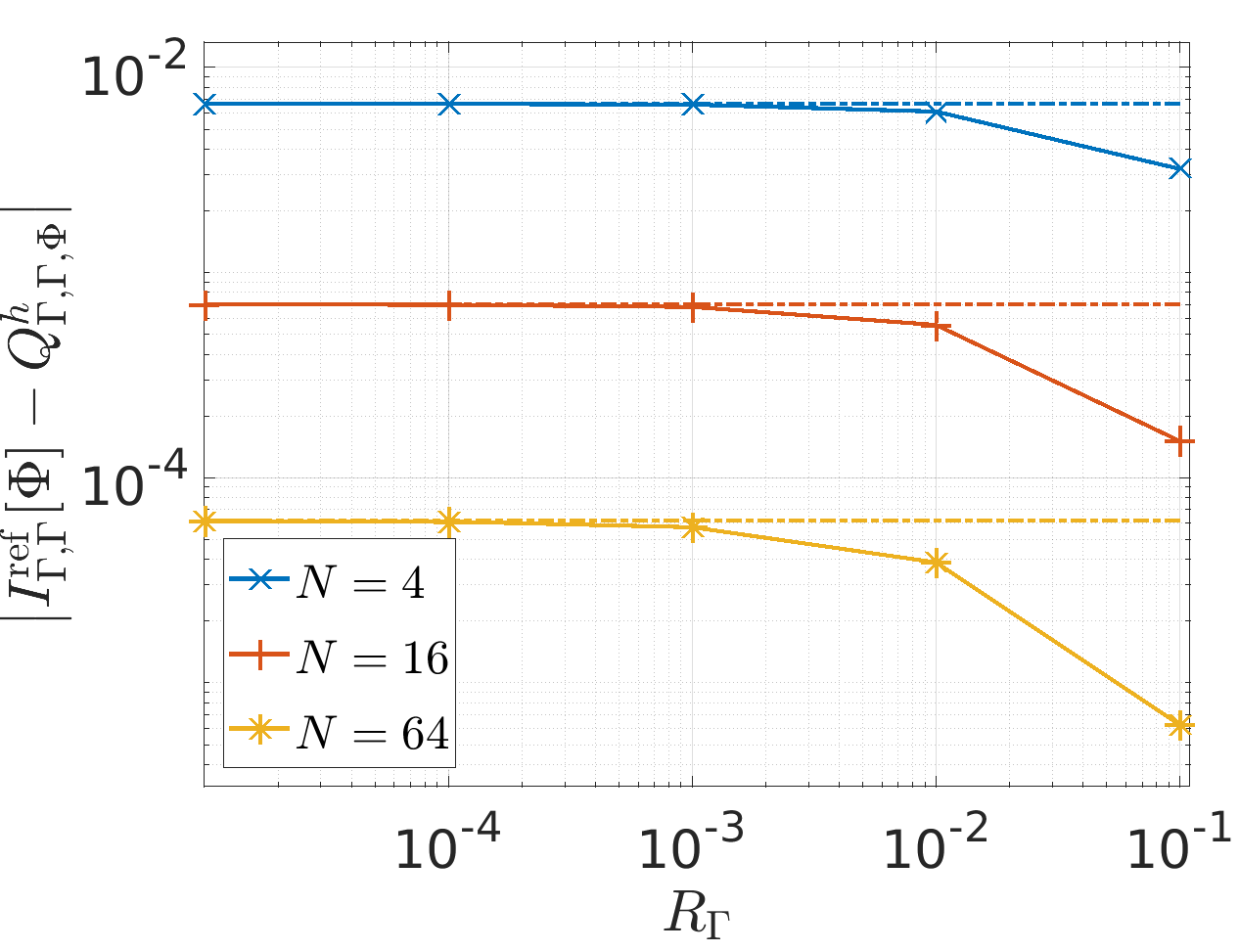}}
\hspace{5mm}
\subfigure[][
{
$R_\Gamma=R_{\Gamma,\Hull}= 0$}
]
{\includegraphics[width=.45\linewidth]{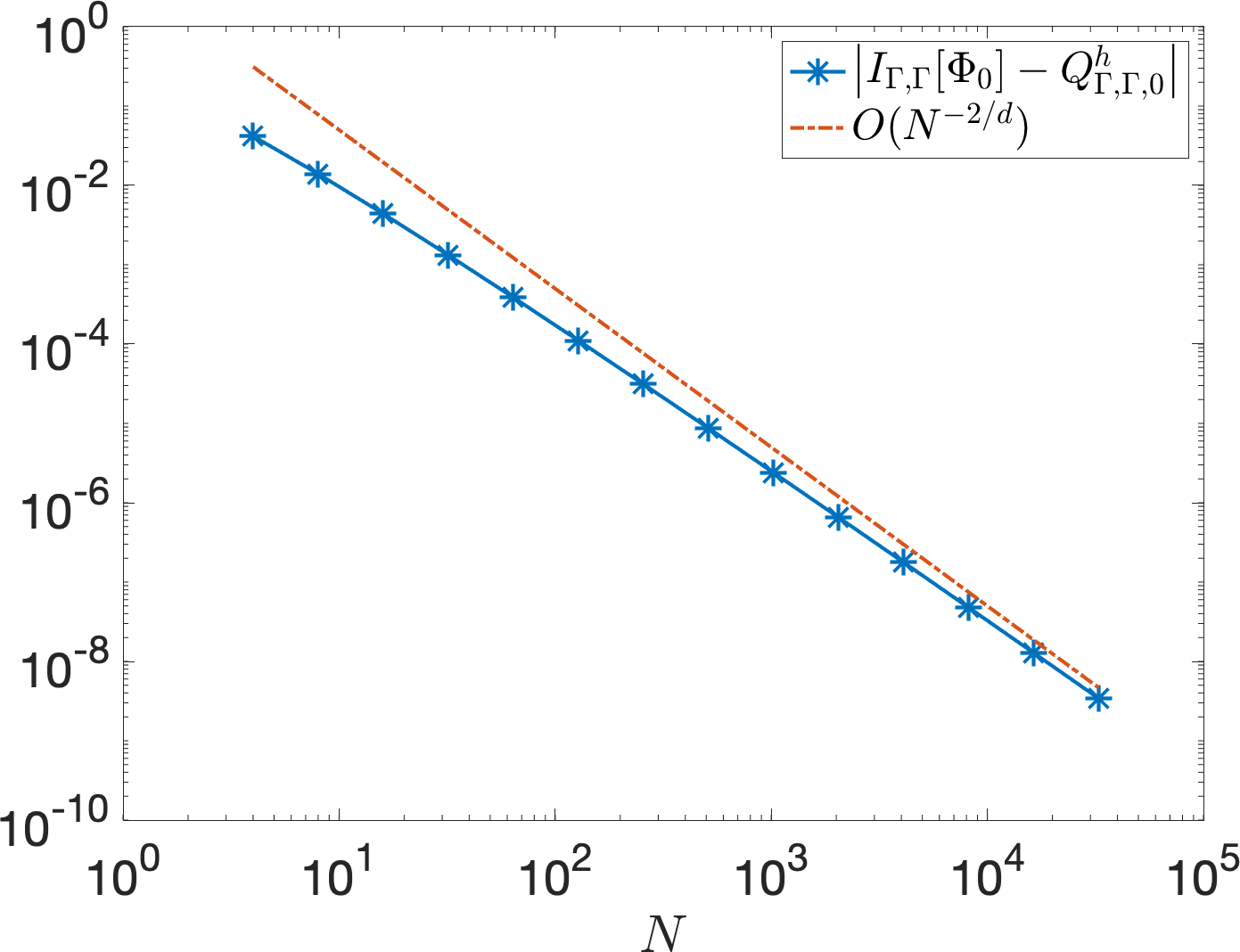}}
\caption{(a) Absolute error for $Q_{\Gamma,\Gamma,\Phi}^{h}$ for a collection of Cantor sets with parameters $\rho$ approaching $1/2$, i.e.\ $R_\Gamma=R_{\Gamma,\Hull}$ approaching $0$, for three different values of $N$ and $k=5$.
The dashed lines indicate the value of $Q_{\Gamma,\Gamma,\Phi}^{h}$ in the case $\rho=1/2$ ($R_\Gamma=R_{\Gamma,\Hull}=0$), in which case $\Gamma=[0,1]$.
\newline
(b) Absolute error for $Q_{\Gamma,\Gamma,0}^{h}$ for a Cantor set with $\rho=1/2$ ($R_\Gamma=R_{\Gamma,\Hull}=0$), i.e.\ $\Gamma=[0,1]$. In this case we have an exact value $I_{\Gamma,\Gamma}[\Phi_0]=-3/2$ with which to compute errors (see \eqref{eq:ExactIntegralLog}).}
\label{fig:CantorSetGapShrink}
\end{figure}

\red{\paragraph{Non-disjoint, non-hull-disjoint and non-homogeneous examples.}}
The results in Figure \ref{fig:CantorSetGapShrink} suggest that our assumption that $\Gamma$ should be hull-disjoint, or even disjoint at all, may not be necessary in Theorem \ref{thm:PhiDoubleSingular}.
To investigate this further we compute $Q_{\Gamma,\Gamma,\Phi}^h$ %
for two non-hull-disjoint examples \red{and $\mu=\cH^d|_\Gamma$}: example (II) in Table \ref{t:disjointness}, which is disjoint but not hull-disjoint, and example (IV) in Table \ref{t:disjointness}, which is not disjoint.
Both attractors are shown in Figure~\ref{fig:disjointness}. 
Absolute errors (scaled by $\cH^d(\Gamma)^2$) for these cases for $k=2$ and a range of $h$ values are presented in Figure \ref{fig:kconv}(a), and for both examples it seems we obtain $O(h^2)$ convergence, even though our theoretical error analysis does not cover these cases. 
Results for the middle-third Cantor dust (example (I) in Table \ref{t:disjointness}) are included in the same figure for reference. 

\begin{figure}[t!]
\centering
\subfigure[]{\includegraphics[width=.49\linewidth]{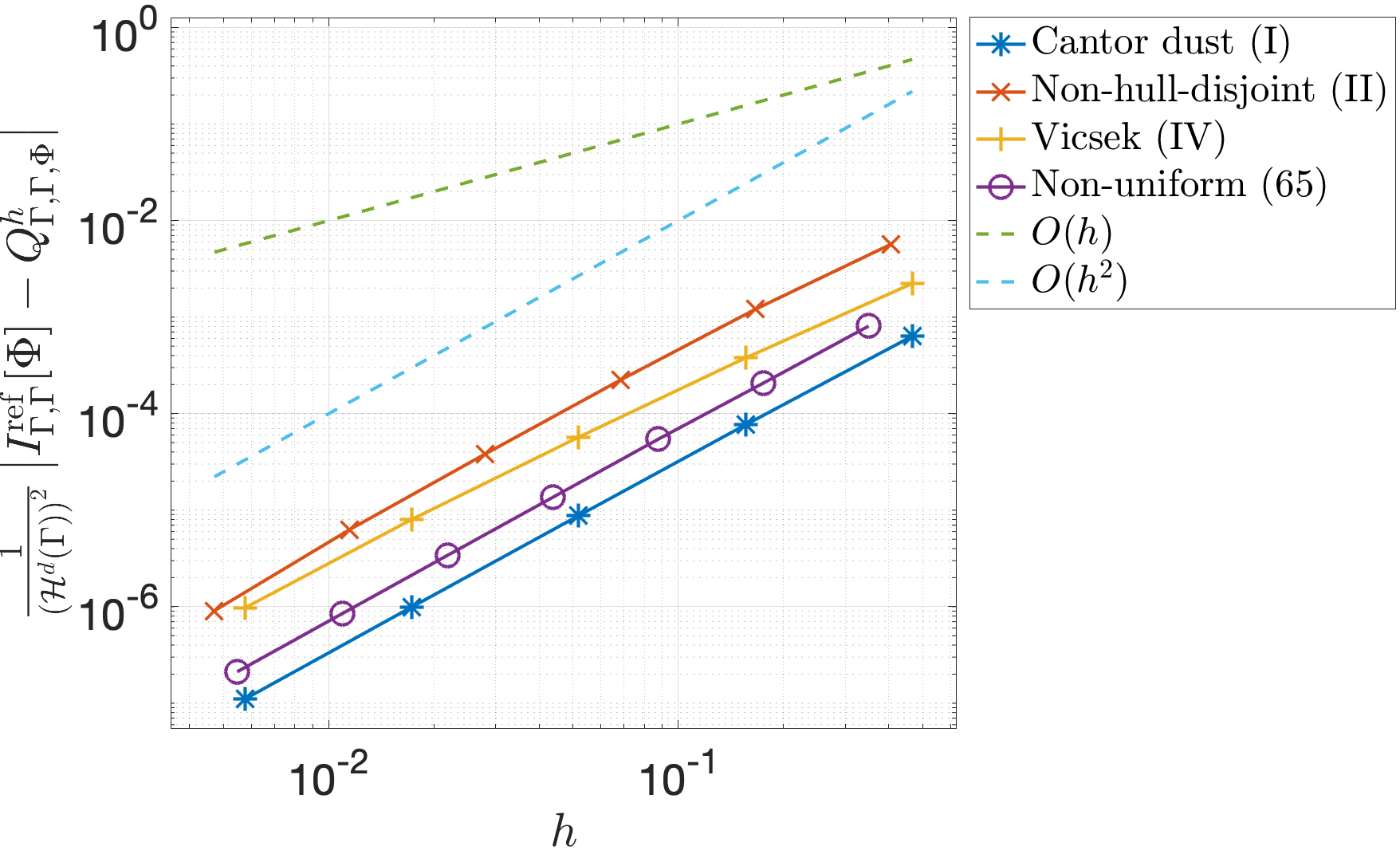} }%
\subfigure[]{\includegraphics[width=0.45\linewidth]{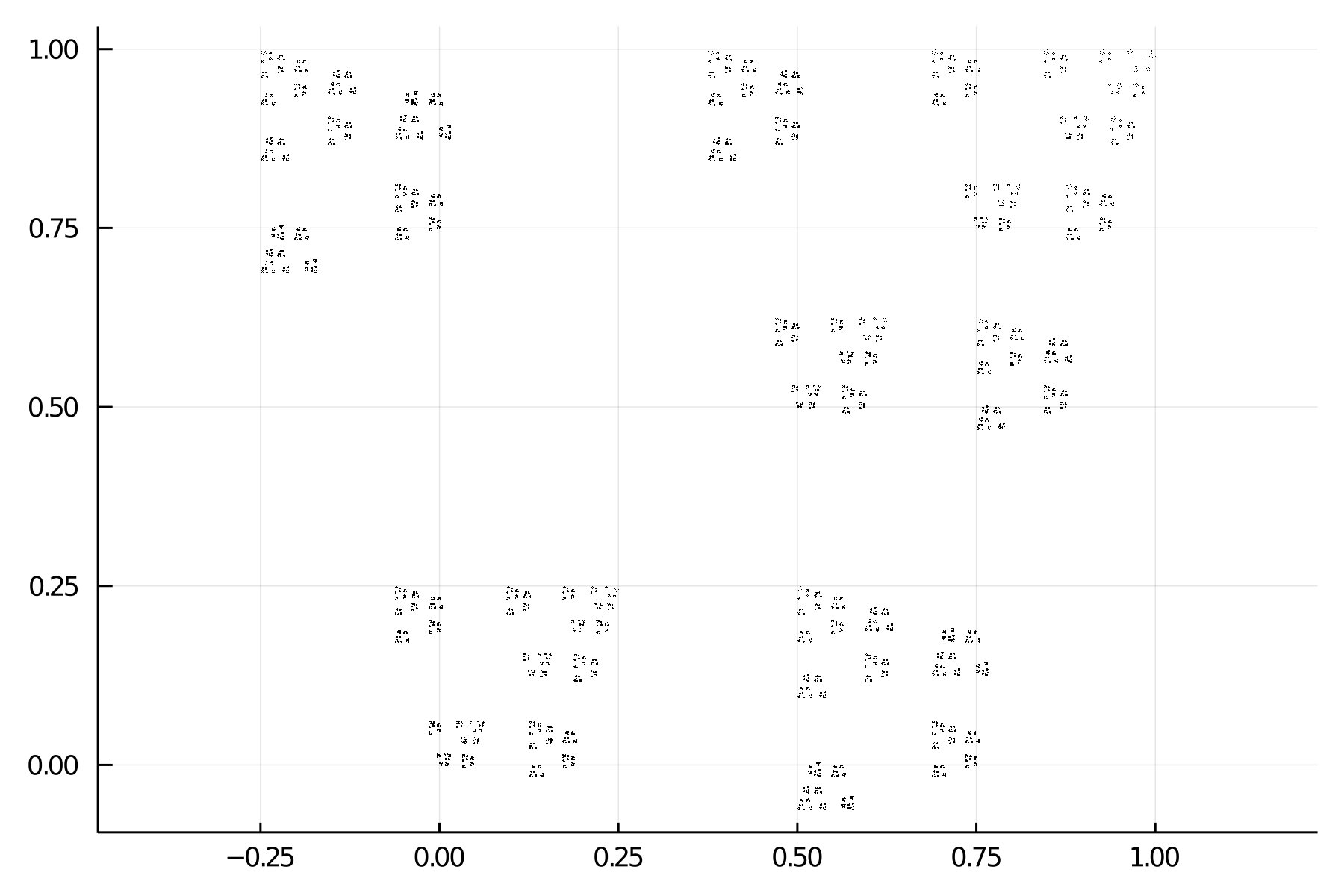}   }
\caption{(a) Convergence of $Q_{\Gamma,\Gamma,\Phi}^h$ %
with $k=2$ for examples (I) (\red{homogeneous} and hull-disjoint), (II) (\red{homogeneous} and disjoint but not hull-disjoint) and (IV) (\red{homogeneous} and not disjoint) from Table \ref{t:disjointness}, along with the attractor of \eqref{eq:wonky} (hull-disjoint but non-\red{homogeneous}). 
The values of $N$ corresponding to (i) the largest $h$ value shown, (ii) the smallest $h$ value shown, and (iii) the reference solution, are 
$(16,4096,65536)$ for (I), 
$(16,16384,65536)$ for (II), 
$(25,15625,78125)$ for (IV), 
and 
$(19,14209,75316)$ for \eqref{eq:wonky}. 
(b) An approximation of the attractor defined by \eqref{eq:wonky}, produced by plotting $s^6(E)$, with $s$ defined by \eqref{eq:fullmap}, with $E$ a set of ten random points in $[0,1]^2$.}
\label{fig:kconv} 
\end{figure}

In Figure \ref{fig:kconv}(a) we also include results for a hull-disjoint but non-\red{homogeneous} IFS with $M=4$ and
\begin{align}
&s_1(x) = \frac{1}{4}x,\quad
s_2(x) = \frac{1}{4}Ax + \Bigg(\begin{matrix}3/4\\0\end{matrix}\Bigg),\quad
s_3(x) = \frac{1}{4}Ax+ \Bigg(\begin{matrix}0\\3/4\end{matrix}\Bigg)
,\quad\nonumber\\
&\qquad\qquad\qquad\qquad\qquad s_4(x) = \frac{1}{2}x+ 
\Bigg(\begin{matrix}1/2\\1/2\end{matrix}\Bigg),\label{eq:wonky}
\end{align}
for the rotation matrix 
$A =\big(\begin{smallmatrix} 0&-1\\1& 0\end{smallmatrix}\big)$.
The attractor $\Gamma$ for this IFS is sketched in Figure \ref{fig:kconv}(b) and has Hausdorff dimension $d = \log((1 + \sqrt{13})/2)/\log2\approx 1.20$ and diameter $\diam(\Gamma)=\sqrt{2}$.
For such non-\red{homogeneous} hull-disjoint cases our current analysis only provides an $O(h)$ convergence result (see Theorem \ref{thm:PhiDoubleSingularNonUniform}). But the results in Figure \ref{fig:kconv} for the IFS \eqref{eq:wonky}
suggest that, at least in this case, our analysis may not be sharp in this respect, since we seem to obtain $O(h^2)$ convergence in practice. We leave further investigation of this to future work.

\red{\paragraph{Comparison against chaos-game quadrature}

In this section we compare the barycentre rule \eqref{eq:BaryQuad} with the ``chaos game'' rule described, e.g., in \cite[eqn~(3.22)--(3.23)]{forte1998chaos} and \cite[Section~6.3.1]{kunze2011fractal}.
This consists of 
(i) choosing some $x_0\in\mathbb{R}^n$ (we take $x_0=x_\Gamma$ in the numerical example below), 
(ii) selecting a realisation of the sequence $\{m_j\}_{j\in\mathbb N}$ of i.i.d.\ random variables taking values in $\{1,\ldots,M\}$ with probabilities $\{p_1,\ldots,p_M\}$, %
(iii) constructing the stochastic sequence $x_j=s_{m_j}(x_{j-1})$ for $j\in\mathbb N$, and 
(iv) approximating the integral of a continuous function $f$ as
$$
Q_\Gamma^{CG}[f]
=\frac1N\sum_{j=1}^N f(x_j)\xrightarrow{N\to\infty}
\int_\Gamma f(x)\rd\mu(x).
$$

We first consider the case where $\Gamma\subset\mathbb{R}^2$ is the Koch snowflake, the attractor of an non-homogeneous non-disjoint IFS with $M=7$ whose parameters were given in Figure \ref{fig:Koch_decomp}. We consider integration of the (smooth) function $f(x)=\cos|x|/(1+|x|^2)$ over $\Gamma$ with respect to the non-Hausdorff invariant measure $\mu$ with $\mu(\Gamma)=1$ and the following randomly chosen weights/probabilities:
\[
(p_1,p_2,p_3,p_4,p_5,p_6,p_7)=(0.052,0.214,0.104,
0.038,0.110,0.194,0.288).
\]
For this non-Hausdorff invariant measure it is instructive to compare how the two quadrature rules deal with the non-uniform way in which the mass of the measure is distributed across $\Gamma$.
In Figure \ref{fig:koch2in1} we plot the nodes and weights for the barycentre rule for the case $h=0.01$ (which corresponds to $N=35839$) alongside those for one realisation of the chaos game rule with the same $N=35839$.
Each node is represented by a small dot, coloured according to the corresponding quadrature weight. 
For the chaos game, the weights are uniform, all being equal to $1/N$, but the nodes are distributed non-uniformly, being concentrated in the regions where the measure has greatest mass. By contrast, the nodes for the barycentre rule are distributed approximately uniformly, but the weights vary according to the measure.

\begin{figure}[t!]
	\centering
	\includegraphics[width=\linewidth]{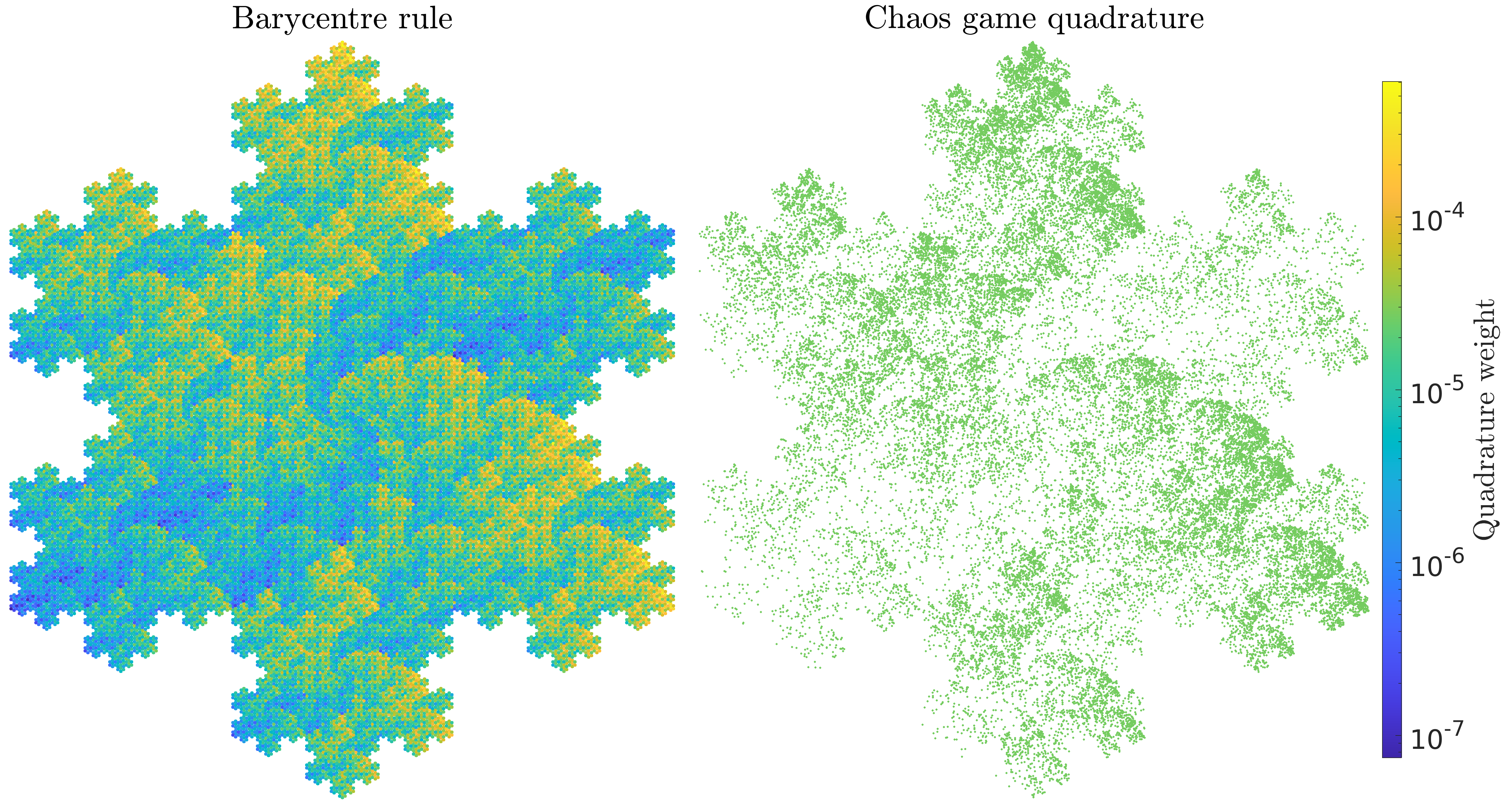}
	\caption{\red{Visual representation of barycentre rule (left) and chaos game quadrature (right) on the Koch snowflake, for a randomly chosen invariant measure. Each quadrature node is represented by a small dot, coloured according to the corresponding quadrature weight.}}
	\label{fig:koch2in1}
\end{figure}

In Figure \ref{fig:barychaoskochconvrand} we plot the relative quadrature errors $|Q_\Gamma[f]-I^{\mathrm{ref}}_\Gamma[f]|/|I^{\mathrm{ref}}_\Gamma[f]|$ and $|Q_\Gamma^{CG}[f]-I^{\mathrm{ref}}_\Gamma[f]|/|I^{\mathrm{ref}}_\Gamma[f]|$ 
against the number $N$ of point evaluations of $f$, for a range of values of $N$ between $463$ and $320503$. Here the reference value $I^{\mathrm{ref}}_\Gamma[f]$ was computed using the barycentre rule with $N=2876335$ (which corresponds to $h=10^{-3}$). For the chaos game rule, the plots show both the individual errors for each of 1000 random realisations (thin blue lines) and the average of these individual errors (thick red line), which represents an approximation to the statistical expectation of the error for the chaos game rule. The error for the barycentre rule clearly decays like $h^2\sim N^{-2/d}=N^{-1}$, 
consistently with Theorem \ref{th:MidLip1}{\it(iii)} and Remark \ref{rem:ErrVsCost}, even if the latter does not directly apply to non-homogeneous IFSs,\footnote{\red{We remark that, although this IFS is not homogeneous, we still have $N=|L_h(\Gamma)|\sim h^{-2}$ for the barycentre rule, since $h/3<\diam(\Gamma_{\bm})\leq h$ and hence $B_{h/(6\sqrt{3})}(x_\bm)\subset\Gamma_\bm\subset B_{h/2}(x_\bm)$ for all $\bm\in L_h(\Gamma)$, and since $\sum_{\bm\in L_h(\Gamma)}|\Gamma_\bm|=|\Gamma|$ ($|\cdot|$ being the Lebesgue measure in $\mathbb R^2$).}} 
while the average error for the chaos game rule decays like ${N^{-1/2}}$, as one expects from a Monte-Carlo-type stochastic method.
So for this problem the barycentre rule clearly outperforms the chaos game rule. 
Comparing the convergence rates $h^2\sim N^{-2/d}$ (for the homogeneous case and the present one) and $N^{-1/2}$, we expect that the advantage provided by the barycentre rule over the chaos game rule is even stronger for the lower-dimensional attractors considered in the previous numerical experiments. 

\begin{figure}[t!]
	\centering
	\includegraphics[width=0.9\linewidth]{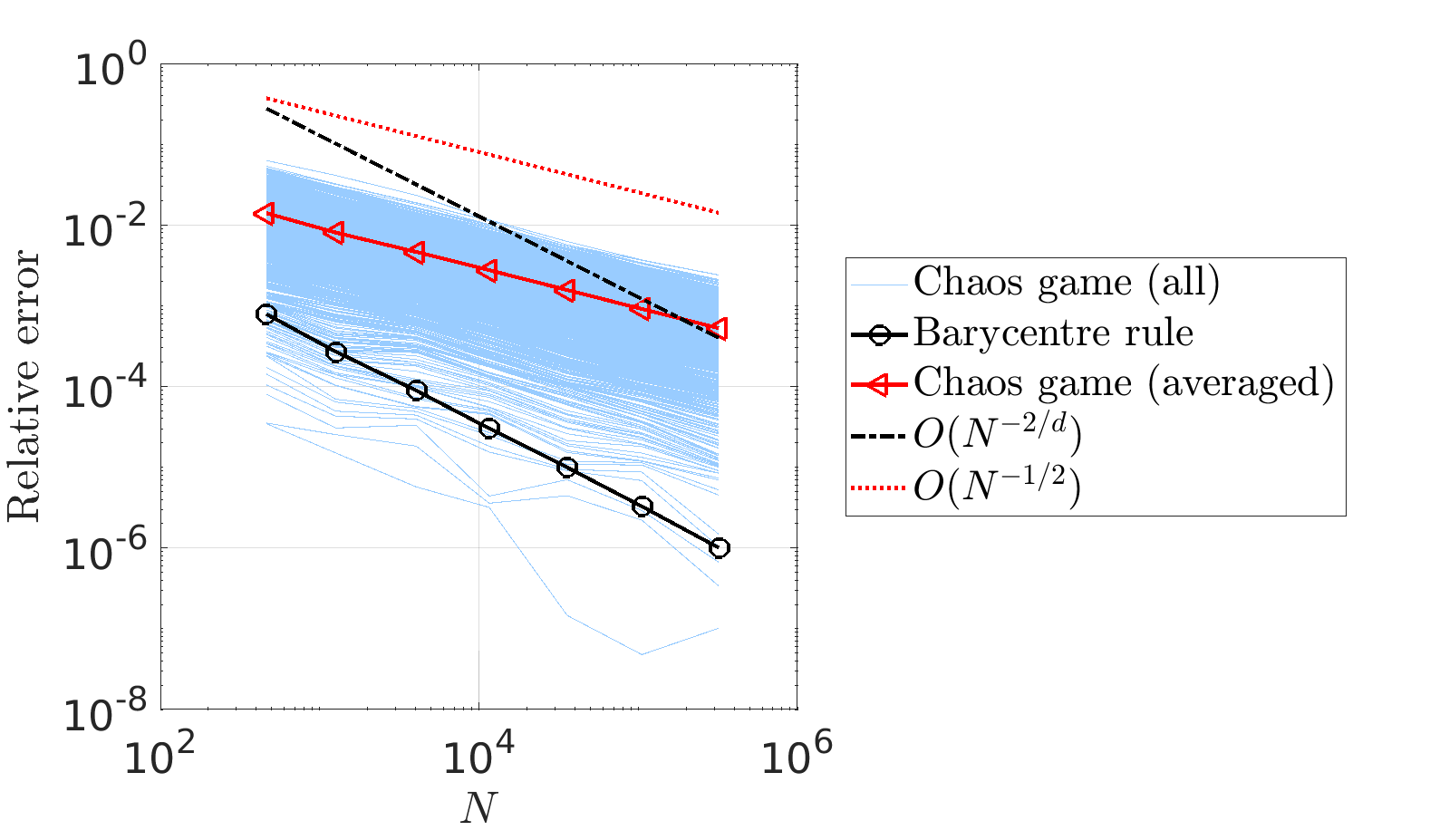}
\caption{Convergence of the barycentre rule and the chaos game rule for integration of a smooth integrand with respect to a non-Hausdorff invariant measure on the Koch snowflake. For the chaos game rule we show both the individual errors for each of 1000 random realisations (thin blue lines) and the average of these individual errors (thick red line).}
	\label{fig:barychaoskochconvrand}
\end{figure}

For higher-dimensional problems we might expect the stochastic approach to become more competitive. To investigate this we consider the case where $\Gamma$ is a high-dimensional Cantor dust. Specifically, we take $\Gamma=(\Gamma_\rho)^6\subset [0,1]^6\subset\mathbb R^6$, a 6-fold Cartesian product of a homogeneous dyadic Cantor set $\Gamma_\rho\subset\mathbb R$ (with contractions $s_1(x)=\rho x$ and $s_2(x)=1-\rho+\rho x$ for some $0<\rho<1/2$) with itself, so that $\Gamma$ is the attractor of a homogeneous, disjoint IFS with $M=2^6=64$ and $d=\dimH\GG=\log{M}/\log(1/\rho)=6\log{2}/\log(1/\rho)$. 
Figure~\ref{fig:ChaosCantor} shows the relative quadrature errors 
for integration of the (smooth) integrand $f(x)=\cos|x|/(1+|x|^2)$ 
with respect to the normalised Hausdorff measure $\mu=\frac1{\cH^d(\Gamma)}\cH^d|_\Gamma$, for three different values of $\rho$, namely $\rho\in\{\frac14, \frac1{2\sqrt2},\frac1{2^{6/5}}
\}$, corresponding to $d=\dimH\GG\in\{3,4,5\}$ respectively.
Both rules are applied with $N\in\{64,64^2,64^3\}=\{64,4096,262144\}$ and the reference value is computed using the barycentre rule with $N=64^4=16777216$. 

For all three values of the dimension $d$, the barycentre rule converges like %
$h^2\sim N^{-2/d}$,   
as predicted by Theorem~\ref{th:MidLip1}{\it(iii)}, while the average error for the chaos game rule converges consistently like $N^{-1/2}$.  
Hence for $d=3$ the barycentre rule converges faster; for $d=4$ the two methods converge at the same rate, and for $d=5$ the chaos game rule converges faster (although we note that in this particular experiment the errors for the barycentre rule were smaller than the expected errors for the chaos game even for $d=5$). 
}

\begin{figure}[t!]
\centering
\includegraphics[width=.99\linewidth, clip, trim=100 0 90 0]{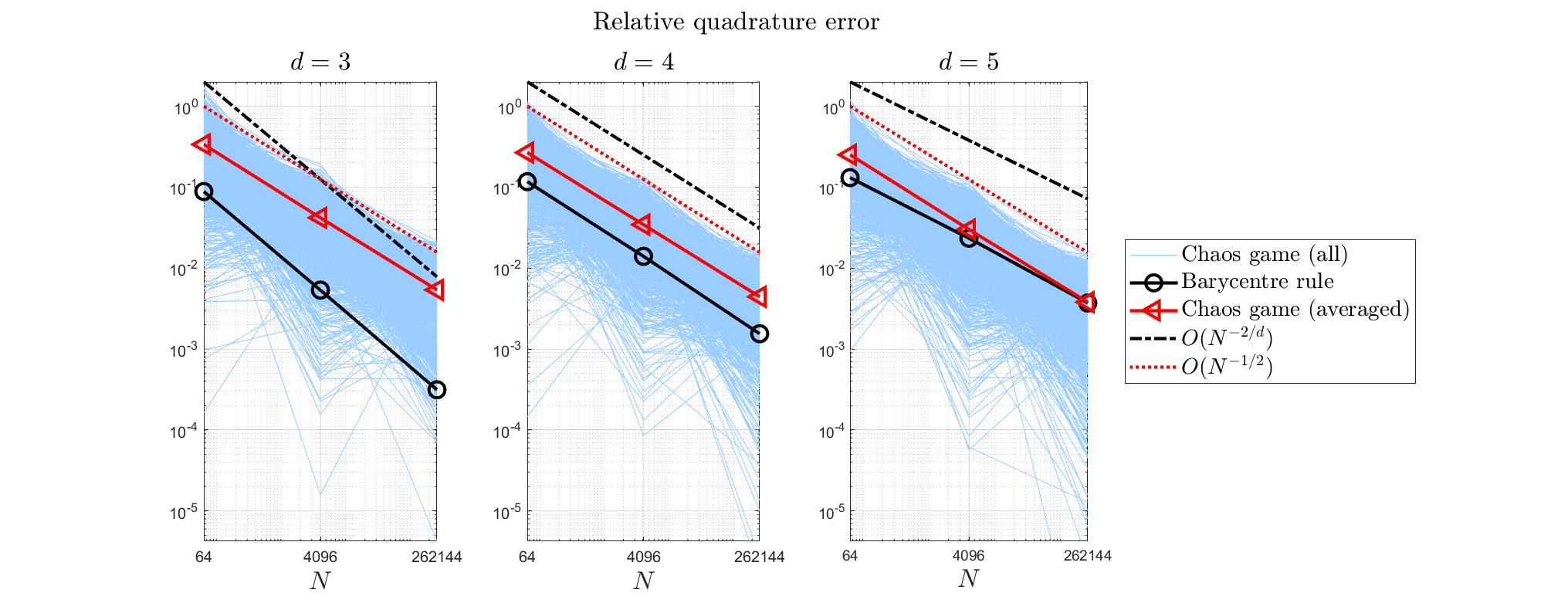}   
\caption{\red{Convergence of the barycentre rule and the chaos game rule for the approximation of a single smooth integral on three Cantor dusts $\Gamma\subset\mathbb R^6$ with $d=\dimH(\Gamma)=3,4,5$.}}
\label{fig:ChaosCantor} 
\end{figure}

\appendix
\red{\section{Integrability of singular functions with respect to invariant measures}

In this appendix we collect some results concerning the integrability of singular functions with respect to invariant measures of the type defined in Section \ref{sec:scaling}. In particular, we study for which $t\geq 0$ the single integral 
$I_\Gamma[\Phi_t(\cdot,\eta)]$ (defined in \eqref{eq:singint_2}) and the double integral $I_{\Gamma,\Gamma}[\Phi_t]$ (defined in \eqref{eq:double_int}) are finite.

\subsection{The Hausdorff measure case \texorpdfstring{$\mu=\cH^d|_\Gamma$}{mu=Hd}}
\label{sec:IntegrabilityHausdorff}
In the case where $\mu=\cH^d|_\Gamma$, everything we need is provided by the following lemma, 
which is adapted from \cite[Lemma 2.13]{CC08}. 
\begin{lem}[{\cite[Lemma 2.13]{CC08}}]
\label{lem:AntonioModified}
Let $0<d\leq n$ and let $\Gamma$ be a compact $d$-set, satisfying \eqref{eq:dset2} for some constants $\tilde{c}_2>\tilde{c}_1>0$. Let $x\in \Gamma$ and let $f:(0,\infty)\to [0,\infty)$ be non-increasing and continuous.
Then
\begin{align}
\label{eq:AntonioModified}
\tilde{c}_1d\int_0^{\diam(\Gamma)} r^{d-1}f(r)\,\rd r \leq 
\int_\Gamma f(|x-y|)\, \rd \cH^d(y) \leq \tilde{c}_2d\int_0^{\diam(\Gamma)} r^{d-1}f(r)\,\rd r.
\end{align}
\end{lem}

\begin{cor}
\label{cor:HausdorffIntegrability}
Let $0<d\leq n$ and let $\Gamma$ be a compact $d$-set. Then, for $\mu=\cH^d|_\Gamma$, and for any $\eta\in \Gamma$, $I_\Gamma[\Phi_t(\cdot,\eta)]$ is finite if and only if $t<d$. For $\mu=\mu'=\cH^d|_\Gamma$, $I_{\Gamma,\Gamma}[\Phi_t]$ is finite if and only if $t<d$. 
\end{cor}

\subsection{General invariant measures}
\label{sec:IntegrabilityGeneral}
For a more general invariant measure $\mu$, as defined in Section \ref{sec:scaling}, the integrability criterion on $t$ for the single integral $I_\Gamma[\Phi_t(\cdot,\eta)]$ depends on the point $\eta$. Given $\eta\in\Gamma$ let
\[
t_\mu(\eta):= \sup\{t\geq 0, I_\Gamma[\Phi_t(\cdot,\eta)]<\infty\}.
\]
The integral $I_\Gamma[\Phi_t(\cdot,\eta)]$ and the threshold $t_\mu(\eta)$ are called ``generalized electrostatic potential'' (``$t$-potential'' in \cite[(4.12)]{Fal}) and ``electrostatic local dimension'', respectively, in \cite[Defns~3 and 5]{mantica2007asymptotic}. 
From \cite[Chap.~8, p.~109]{Mattila95}, which holds for general Radon measures on $\mathbb R^n$, we have that for $t>0$ 
$$
I_\Gamma[\Phi_t(\cdot,\eta)]
=t \int_0^\infty\frac{\mu(B_r(\eta))}{r^{t+1}}\rd r.
$$
From this it follows that if there exist $C,t'>0$ such that $\mu(B_r(\eta))\leq Cr^{t'}$ for small $r$ then $I_\Gamma[\Phi_t(\cdot,\eta)]<\infty$ for all  $t<t'$, i.e.~$t_\mu(\eta)\geq t'$. 
As in \cite[Eqn~(17.15)]{Fal} we define the local dimension of $\mu$ at $\eta\in\mathbb{R}^n$ (when the limit exists) as
$$
\dimloc\mu(\eta):=\lim_{r\to0}\frac{\log\mu(B_r(\eta))}{\log r}.
$$
From the above observations it follows that if $\dimloc\mu(\eta)$ exists then $t_\mu(\eta)= \dimloc\mu(\eta)$.
By \cite[Thm~2]{geronimo1989exact} we have that if $\Gamma$ is disjoint (see also \cite[Thm~7.4]{strichartz1994self} for the general case) then
\[
\dimloc\mu(\eta) =t_{\rm a.e.} := \frac{\sum_{m=1}^M p_m\log p_m}{\sum_{m=1}^M p_m\log \rho_m}, \qquad \mu\text{-a.e. } \eta \in \Gamma.\]
As a consequence, $t_\mu(\eta)= t_{\rm a.e.}$, for $\mu\text{-a.e. } \eta \in \Gamma$. But $t_\mu(\eta)$ is not in general equal to $t_{\rm a.e.}$ on the whole of $\Gamma$. 
By \cite[Thm 17.4]{Fal}, if $\Gamma$ is disjoint then for all points $\eta\in \Gamma$ where the local dimension exists (which we know from the above is $\mu$-a.e.) we have:
$$\min_{m=1,\ldots,M} \frac{\log p_m}{\log\rho_m}
\le \;t_\mu(\eta)=\dimloc\mu(\eta)\;
\le\max_{m=1,\ldots,M} \frac{\log p_m}{\log\rho_m}.
$$
The upper and the lower bounds coincide, i.e.\ $\log p_m/\log\rho_m$ is the same for all $m=1,\ldots,M$, if and only if $p_m=\rho_m^d$, i.e.\ $\mu=\cH^d|_\Gamma$. 
Moreover, the extremal values are attained, as the following lemma shows. 

\begin{lem}
\label{lem:IntegrabilityFixedPoint}
Let $\Gamma$ and $\mu$ be as in Subsections \ref{sec:IFS} and \ref{sec:scaling}. 
Fix $m\in\{1,\ldots,M\}$ and let $\eta_{m}$ denote the fixed point of the contracting similarity $s_{m}$, i.e.\ the unique point $\eta_{m}\in \Gamma$ such that $s_m(\eta_m)=\eta_m$. Suppose that $\eta_m\not\in \Gamma_{m'}$ for any $m'\in\{1,\ldots,M\}$, $m'\neq m$. (This holds, for instance if 
$\Gamma$ is disjoint in the sense of \eqref{eq:Rdef}.) 
Then there exist $C_2>C_1>0$ such that, for all sufficiently small $r>0$, 
\begin{align}
\label{eq:muBR}
C_1r^{t_m} \leq \mu(B_r(\eta_m)\cap \Gamma) \leq C_2r^{t_m},
\end{align}
where 
\[ t_m:=\frac{\log{p_m}}{\log{\rho_m}}.\]
Hence $t_\mu(\eta_m)=\dimloc\mu(\eta)=t_m$. 
\end{lem}
\begin{proof}
Using the fact that $\eta_m$ is the fixed point of $s_m$, and the fact that 
$R_m:=\min_{m'\neq m}\dist(\eta_m,\Gamma_{m'})>0$, one can show that
\[
s^\ell_m\GG\subset B_{\rho_m^\ell\diam\GG}(\eta_m)\cap\Gamma\quad\text{and}\quad
B_{\rho_m^\ell R}(\eta_m)\cap\Gamma\subset s^\ell_m\GG \quad \forall\ell\in\mathbb{N}, \forall R<R_m,
\]
so that
\[ s_m^{j_r+j}(\Gamma)\subset B_r(\eta_m)\cap \Gamma\subset s_m^{j_r+j'}(\Gamma),\]
where $j_r=\left\lfloor\log r/\log \rho_m\right\rfloor$, $j=1-\left\lfloor\log \diam(\Gamma)/\log \rho_m\right\rfloor$ and $j'=-1-\left\lfloor\log R_m/\log \rho_m\right\rfloor$, provided that $r$ is small enough to ensure that $j_r+j'\geq 0$, i.e.\ $\left\lfloor\log r/\log \rho_m\right\rfloor\geq 1+\left\lfloor\log R_m/\log \rho_m\right\rfloor$. Since $j$ and $j'$ are independent of $r$, the bound \eqref{eq:muBR} follows upon applying $\mu$ and recalling \eqref{eq:sim_measure_general}, which implies that $\mu(s_m^\ell(\Gamma))=p_m^\ell\mu(\Gamma)$ for $\ell\in\mathbb{N}_0$. 

From \eqref{eq:muBR} it follows that $\dimloc\mu(\eta_m)$ exists and equals $t_m$, and hence (by our earlier arguments) that $t_\mu(\eta_m)$ takes the same value. 
\end{proof}

We now consider the double integral \[I_{\Gamma,\Gamma}[\Phi_t]=\int_\Gamma\int_\Gamma \Phi_t(x,y)\,\rd \mu'(y)\rd \mu(x),\]
where, for maximum generality, $\mu$ and $\mu'$ are invariant measures on $\Gamma$ with (possibly different) weights/probabilities $(p_1,\ldots,p_M)$ and $(p_1',\ldots,p_M')$ respectively. Define 
\[ t_{\mu,\mu'}:= \sup\{t\geq0,\, I_{\Gamma,\Gamma}[\Phi_t]<\infty\}.\] 
The integral $I_{\Gamma,\Gamma}[\Phi_t]$ and the threshold $t_{\mu,\mu'}$ are called ``generalized electrostatic energy'' (``$t$-energy'' in \cite[(4.13)]{Fal}) and ``electrostatic correlation dimension'', respectively, in \cite[Defns~4 and 6]{mantica2007asymptotic}.

\begin{lem}
\label{lem:IntegrabilityDouble}
Let $\Gamma$, $\mu$ and $\mu'$ be as above, and suppose that $\Gamma$ is disjoint. Then $t_{\mu,\mu'}=t_{*}$, where $t_{*}$ is the unique positive solution of 
\[\sum_{m=1}^M p_mp_m' \rho_m^{-t_*}=1.\]
\end{lem}
\begin{proof}
To see that $t_{\mu,\mu'}\leq t_{*}$, we note that if $0<t<t_{\mu,\mu'}$ then $I_{\Gamma,\Gamma}[\Phi_t]<\infty$ and, arguing as in the proof of Theorem \ref{th:sing_reform}, 
\[ \left(1-\sum_{m=1}^M p_mp_m' \rho_m^{-t}\right)I_{\Gamma,\Gamma}[\Phi_t] = \sum_{m=1}^M\sum_{\substack{m'=1\\m'\neq{m}}}^M I_{\Gamma_m,\Gamma_{m'}}[\Phi_t].\] 
Since $\Phi_t(x,y)>0$ for $x\neq y$ the integrals $I_{\Gamma_m,\Gamma_{m'}}[\Phi_t]$ are all positive, implying that the right-hand side is non-zero, so that the factor $1-\sum_{m=1}^M p_mp_m' \rho_m^{-t}$ cannot vanish, i.e.\ $t\neq t_{*}$. Since this holds for all $0<t<t_{\mu,\mu'}$ we must have $t_{\mu,\mu'}\leq t_{*}$. 

To prove that $t_{\mu,\mu'}\geq t_{*}$ we adopt an argument suggested by K. Falconer \cite{FalPrivateComm}. Suppose that $0<t<t_{*}$. Then 
\[ 0<\lambda_t:=\sum_{m=1}^M p_{m}p'_{m}\rho_{m}^{-t}<1,\]
and we can write,  where $I_0=\{0\}$ and $(0,\bm)$ stands for $\bm$,
\begin{align*}
\label{}
I_{\Gamma,\Gamma}[\Phi_t] 
&= \sum_{\ell=0}^\infty \sum_{\bm\in I_{\ell}}\sum_{m=1}^M\sum_{\substack{m'=1\\m'\neq{m}}}^M \int_{\Gamma_{(\bm,m)}}\int_{\Gamma_{(\bm,m')}} |x-y|^{-t}\,\rd \mu'(y)\rd\mu(x)\\
&\leq \sum_{\ell=0}^\infty \sum_{\bm\in I_{\ell}}\sum_{m=1}^M\sum_{\substack{m'=1\\m'\neq{m}}}^M \mu(\Gamma_{(\bm,m)})\mu'(\Gamma_{(\bm,m')})\left(R_\Gamma\prod_{j=1}^\ell \rho_{m_j}\right)^{-t}\\
&\leq \sum_{\ell=0}^\infty \sum_{\bm\in I_{\ell}} \mu(\Gamma_{\bm})\mu'(\Gamma_{\bm}) \left(R_\Gamma\prod_{j=1}^\ell \rho_{m_j}\right)^{-t}\\
&= R_\Gamma^{-t}{\mu\GG\mu'\GG}\sum_{\ell=0}^\infty \sum_{\bm\in I_{\ell}} \prod_{j=1}^\ell p_{m_j}\prod_{j=1}^\ell p'_{m_j}\prod_{j=1}^\ell \rho_{m_j}^{-t} \\
&= R_\Gamma^{-t}{\mu\GG\mu'\GG}\sum_{\ell=0}^\infty \sum_{\bm\in I_{\ell}} \prod_{j=1}^\ell p_{m_j}p'_{m_j}\rho_{m_j}^{-t} \\
&= R_\Gamma^{-t}{\mu\GG\mu'\GG}\sum_{\ell=0}^\infty \left(\sum_{m=1}^M p_{m}p'_{m}\rho_{m}^{-t}\right)^\ell \\
&= R_\Gamma^{-t}{\mu\GG\mu'\GG}\sum_{\ell=0}^\infty \lambda_t^\ell,
\end{align*}
which is finite since $0<\lambda_t<1$. Hence $I_{\Gamma,\Gamma}[\Phi_t]<\infty$ for all $0<t<t_{*}$, which implies that $t_{\mu,\mu'}\geq t_{*}$. 
\end{proof}
}

\section*{Acknowledgements}

The authors acknowledge support from EPSRC grants EP/S01375X/1 (DH) and EP/V053868/1 (DH and AG), 
from PRIN project ``NA-FROM-PDEs'' and from MIUR through the ``Dipartimenti di Eccellenza'' Programme (2018-2022) -- Dept.\ of Mathematics, University of Pavia (AM),
and thank Ant\'onio Caetano, Simon Chandler-Wilde, Kenneth Falconer, Uta Freiberg, Giorgio Mantica, and the two anonymous reviewers for helpful discussions in relation to this work.

\section*{Statements and Declarations}

\subsection*{Competing interests}

The work of AG and DH is supported ``in kind'' (through staff time and equipment use) by the UK Met Office, who are the industrial partner on grant EP/V053868/1. All authors certify that they have no other affiliations with or involvement in any organisation or entity with any financial interest or non-financial interest in the subject matter or materials discussed in this manuscript.

\subsection*{Data availability}
\red{Data sharing is not applicable to this article as no datasets were generated or analysed during the current study.}

\bibliography{fractalQuad}
\bibliographystyle{siam}
\end{document}